\theoremstyle{plain}
\newtheorem{theorem}{Theorem}[section]
\newtheorem{lemma}[theorem]{Lemma}
\newtheorem{corollary}[theorem]{Corollary}
\newtheorem{proposition}[theorem]{Proposition}
\newtheorem{observation}[theorem]{Observation}
\newtheorem{question}[theorem]{Question}
\newtheorem{porism}[theorem]{Porism}
\theoremstyle{definition}
\newtheorem{definition}[theorem]{Definition}
\newcommand{\code}{\textnormal{code}}
\newcommand{\cp}{\,\square\,}
\def\finf{\mathop{{\rm I}\kern -.27 em {\rm F}}\nolimits}
\begin{document}


\title{Maker-Breaker resolving game}

\author{
Cong X. Kang$^1$ \and Sandi Klav\v{z}ar$^{2,3,4}$ \and Ismael G. Yero$^5$ \and Eunjeong Yi$^1$\medskip\\
\small Texas A\&M University at Galveston, Galveston, TX 77553, USA$^{1}$\\
\small Faculty of Mathematics and Physics, University of Ljubljana, Slovenia$^2$\\
\small Faculty of Natural Sciences and Mathematics, University of Maribor, Slovenia$^3$\\
\small Institute of Mathematics, Physics and Mechanics, Ljubljana, Slovenia$^4$\\
\small Universidad de C\'{a}diz, Av. Ram\'{o}n Puyol s/n, 11202 Algeciras, Spain$^5$\\
{\small\tt kangc@tamug.edu}; {\small\tt sandi.klavzar@fmf.uni-lj.si} \\
{\small\tt ismael.gonzalez@uca.es}; {\small\tt yie@tamug.edu}}

\maketitle

\date{}

\begin{abstract}
A set of vertices $W$ of a graph $G$ is a resolving set if every vertex of $G$ is uniquely determined by its vector of distances to $W$. In this paper, the Maker-Breaker resolving game is introduced. The game is played on a graph $G$ by Resolver and Spoiler who alternately select a vertex of $G$ not yet chosen. Resolver wins if at some point the vertices chosen by him form a resolving set of $G$, whereas Spoiler wins if the Resolver cannot form a resolving set of $G$. The outcome of the game is denoted by $o(G)$ and  $R_{\rm MB}(G)$ (resp.\ $S_{\rm MB}(G)$) denotes the minimum number of moves of Resolver (resp.\ Spoiler) to win when Resolver has the first move. The corresponding invariants for the game when Spoiler has the first move are denoted by $R'_{\rm MB}(G)$ and $S'_{\rm MB}(G)$. Invariants $R_{\rm MB}(G)$, $R'_{\rm MB}(G)$, $S_{\rm MB}(G)$, and $S'_{\rm MB}(G)$ are compared among themselves and with the metric dimension ${\rm dim}(G)$.  A large class of graphs $G$ is constructed for which $R_{\rm MB}(G) > {\rm dim}(G)$ holds. The effect of twin equivalence classes and pairing resolving sets on the Maker-Breaker resolving game is described. As an application $o(G)$, as well as $R_{\rm MB}(G)$ and $R'_{\rm MB}(G)$ (or $S_{\rm MB}(G)$ and $S'_{\rm MB}(G)$), are determined for several graph classes, including trees, complete multi-partite graphs, grid graphs, and torus grid graphs.
\end{abstract}

\noindent\small {\bf{Keywords:}} resolving set; metric dimension; Maker-Breaker game; Maker-Breaker resolving game; twin equivalence class, pairing resolving set\\
\small {\bf{2020 Mathematics Subject Classification:}} 05C12, 05C57, 05C69

\section{Introduction}

Let $G = (V(G), E(G))$ be a finite, simple, undirected, connected graph  of order at least $2$. A set $W \subseteq V(G)$ is a \emph{resolving set} of $G$ if, for every pair of distinct vertices $x$ and $y$ of $G$, there exists $z \in W$ such that $d(x,z) \neq d(y,z)$, where $d(u,v)$ denotes the shortest-path distance between $u$ and $v$. The \emph{metric dimension} $\dim(G)$ of $G$ is the minimum of the cardinalities over all resolving sets of $G$. A resolving set of cardinality $\dim(G)$ is called a \emph{metric basis} for $G$. These concepts were independently introduced by Slater~\cite{slater} and by Harary and Melter~\cite{harary}. Soon after it was noted in~\cite{NP} that determining the metric dimension of a graph is an NP-hard problem. Metric dimension has found applications in fields as diverse as robot navigation, network discovery and verification, chemistry, combinatorial optimization, and strategies for the mastermind game. See~\cite{bailey-2011, chartrand-2003} for history and surveys and~\cite{akhter-2019, jiang-2019, toit-2019} for some of the more recent results on metric dimension.

The Maker-Breaker game, introduced in 1973 by Erd\H{o}s and Selfridge~\cite{erdos-1973}, is played on an arbitrary hypergraph $H = (V,E)$. Two players, named Maker and Breaker, alternately select a vertex from $V$ not yet chosen in the course of the game. Maker wins the game if he is able to select all the vertices of one of the hyperedges from $E$, while Breaker wins if she is able to prevent Maker from doing so. We refer to the books of Beck~\cite{beck-2008} and of Hefetz et al.~\cite{hefetz-2014} for more information on this game as well as to papers~\cite{hancock-2019, mikalacki-2018} for recent related developments.

Motivated by the Maker-Breaker game and the domination game~\cite{bresar-2010}, Duch\^ene, Gledel, Parreau, and Renault introduced the Maker-Breaker domination game~\cite{duchene-2020}. This game is played on a graph $G$ and can be described as the Maker-Breaker game on the hypergraph with the same vertex set as $G$ and with hyperedges corresponding to the dominating sets of $G$. The game was further investigated in~\cite{gledel-2019}, while in~\cite{gledel-2019+} its total version was  introduced. Inspired by these developments, we introduce in this paper the \emph{Maker-Breaker resolving game} ({\em MBRG} for short) as follows.

The MBRG is played on a graph $G$ by two players, Resolver and Spoiler, which will be denoted throughout the paper by ${\rm R}^*$ and ${\rm S}^*$, respectively. ${\rm R}^*$ and ${\rm S}^*$ alternately select (without missing their turn) a vertex of $G$ that was not yet chosen in the course of the game. If ${\rm R}^*$ is the first to play, we speak of an {\em R-game}, otherwise we have an {\em S-game}. ${\rm R}^*$ wins if at some point the vertices ${\rm R}^*$ has chosen form a resolving set of $G$, whereas ${\rm S}^*$ wins if ${\rm R}^*$  cannot form a resolving set of $G$. The \emph{outcome} of the MBRG on a graph $G$ is denoted by $o(G)$, and there are four possible outcomes as follows: (1) $o(G)=\mathcal{R}$, if ${\rm R}^*$ has a winning strategy in the R-game and the S-game; (2) $o(G)=\mathcal{S}$, if ${\rm S}^*$ has a winning strategy in the R-game and the S-game; (3) $o(G)=\mathcal{N}$, if the first player has a winning strategy; (4) $o(G)=\widetilde{\mathcal{N}}$, if the second player has a winning strategy.

Now, suppose a company $X$ tries to secure its network by installing transmitters at certain locations within the company, so that the robot is aware of its security status at all times and thus identifying the exact location (or a specific computer with  virus infection) in the network, whereas a rival company $Y$ tries to prevent $X$ from forming a secure network by occupying or controlling strategic locations or computers within the network of $X$. With this application in mind and considering the time constraint (the longer it takes for a player to win a game, the more it costs for the player), we introduce the following terminology and notation.
\begin{itemize}
\item The \emph{Maker-Breaker resolving number} $R_{\rm MB}(G)$ of $G$ is the minimum number of moves of ${\rm R}^*$ to win the R-game provided he has a winning strategy. Otherwise, we set $R_{\rm MB}(G) = \infty$.
\item $R'_{\rm MB}(G)$ is the minimum number of moves of ${\rm R}^*$ to win the S-game provided he has a winning strategy. Otherwise, we set $R'_{\rm MB}(G) = \infty$.
\item The \emph{Maker-Breaker spoiling number} $S_{\rm MB}(G)$ of $G$ is the minimum number of moves of ${\rm S}^*$ to win the R-game provided she has a winning strategy. Otherwise, we set $S_{\rm MB}(G) = \infty$.
\item $S'_{\rm MB}(G)$ is the minimum number of moves of ${\rm S}^*$ to win the S-game provided she has a winning strategy. Otherwise, we set $S'_{\rm MB}(G) = \infty$.
\end{itemize}

This paper is organized as follows. In the next section we obtain some general results on the outcome of the MBRG. In  Section~\ref{sec:pairing} the effect of twin equivalence classes and pairing resolving sets on the MBRG is described and as an application a large class of graphs $G$ is constructed for which $R_{\rm MB}(G) > \dim(G)$ holds. In Section~\ref{sec:applications} we determine $o(G)$, as well as $R_{\rm MB}(G)$ and $R'_{\rm MB}(G)$ or $S_{\rm MB}(G)$ and $S'_{\rm MB}(G)$, when $G$ is a tree, the Petersen graph, a bouquet of cycles, a complete multi-partite graph, a grid graph, or a torus grid graph.

\section{Some general properties of the MBRG}
\label{sec:general}

In this section we compare parameters of the MBRG with the metric dimension, $R_{\rm MB}(G)$ with $R'_{\rm MB}(G)$, and $S_{\rm MB}(G)$ with $S'_{\rm MB}(G)$. Along the way we prove the so-called No-Skip Lemma for the Maker-Breaker game played on a hypergraph. But first we comment on the possible outcomes of the MBRG.

Among the four possible outcomes listed in the introduction, the outcome $o(G)=\widetilde{\mathcal{N}}$ never occurs as follows from a general result on the Maker-Breaker game, cf.~\cite{beck-2008, hefetz-2014}. In the case of the Maker-Breaker domination game, this statement and its proof are given in~\cite[Proposition 2]{duchene-2020}. The same argument applies also to the Maker-Breaker resolving game. The other three possible outcomes in the latter game are realized, as the reader can verify on the examples given in Fig.~\ref{fig_example}.

\begin{figure}[ht!]
\centering
\begin{tikzpicture}[scale=.8, transform shape]

\node [draw, shape=circle, scale=.8] (1) at  (0, 0) {};
\node [draw, shape=circle, scale=.8] (2) at  (1, 0) {};
\node [draw, shape=circle, scale=.8] (3) at  (2, 0) {};
\node [draw, shape=circle, scale=.8] (4) at  (3, 0) {};

\node [draw, shape=circle, scale=.8] (a) at  (8, 0) {};
\node [draw, shape=circle, scale=.8] (a1) at  (6.5, -1) {};
\node [draw, shape=circle, scale=.8] (a2) at  (7.5, -1) {};
\node [draw, shape=circle, scale=.8] (a3) at  (8.5, -1) {};
\node [draw, shape=circle, scale=.8] (a4) at  (9.5, -1) {};

\node [draw, shape=circle, scale=.8] (b) at  (14, 0) {};
\node [draw, shape=circle, scale=.8] (b1) at  (12.5, -1) {};
\node [draw, shape=circle, scale=.8] (b2) at  (13.5, -1) {};
\node [draw, shape=circle, scale=.8] (b3) at  (14.5, -1) {};
\node [draw, shape=circle, scale=.8] (b4) at  (15.5, -1) {};
\node [draw, shape=circle, scale=.8] (b5) at  (15.5, -2) {};

\node [scale=1] at (1.5,0.7) {$G_1$};
\node [scale=1] at (8,0.7) {$G_2$};
\node [scale=1] at (14,0.7) {$G_3$};

\node [scale=1] at (1.5,-2) {$o(G_1)=\mathcal{R}$};
\node [scale=1] at (8,-2) {$o(G_2)=\mathcal{S}$};
\node [scale=1] at (14,-2) {$o(G_3)=\mathcal{N}$};

\draw(1)--(2)--(3)--(4);\draw(a1)--(a)--(a4);\draw(a2)--(a)--(a3);\draw(b1)--(b)--(b4)--(b5);\draw(b2)--(b)--(b3);

\end{tikzpicture}
\caption{Three examples realizing the three outcomes.}\label{fig_example}
\end{figure}
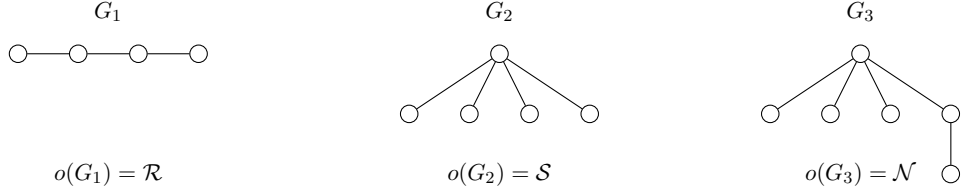

For further examples, note that if $n \ge 2$, then  $o(P_n)=\mathcal{R}$ and $R_{\rm MB}(P_n)=R'_{\rm MB}(P_n)=\dim(P_n)=1$. Moreover, $o(C_3) = \mathcal{N}$, and if $n\ge 4$, then $o(C_n) = \mathcal{R}$ and $R_{\rm MB}(C_n)=R'_{\rm MB}(C_n)=\dim(C_n)=2$.

The order of a graph $G$ will be denoted by $n(G)$. We have the following simple relations between the outcome of the MBRG and the metric dimension.

\begin{proposition}\label{propR}
If $G$ is a connected graph, then the following properties hold.
\begin{itemize}
\item[\emph{(i)}] If $o(G)=\mathcal{R}$, then $\dim(G)\leq \lfloor\frac{n(G)}{2}\rfloor$.
\item[\emph{(ii)}] If $\dim(G) \ge \lceil\frac{n(G)}{2}\rceil+1$, then $o(G)=\mathcal{S}$.
\end{itemize}
\end{proposition}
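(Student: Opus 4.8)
The plan is to reduce both parts to a single move-counting observation: over the course of the game the vertices of $G$ get distributed between the two players, and since they alternate turns, the number of vertices ${\rm R}^*$ can ever hold is capped by the number of moves he gets to make. Concretely, I would first record the elementary fact that in the R-game (where ${\rm R}^*$ moves first) ${\rm R}^*$ makes at most $\lceil \frac{n(G)}{2}\rceil$ moves, and hence the set of vertices he has chosen has cardinality at most $\lceil \frac{n(G)}{2}\rceil$ at every stage; whereas in the S-game (where ${\rm S}^*$ moves first) ${\rm R}^*$ makes at most $\lfloor \frac{n(G)}{2}\rfloor$ moves, so his chosen set always has cardinality at most $\lfloor \frac{n(G)}{2}\rfloor$. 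Crucially, whenever ${\rm R}^*$ wins he does so because at some moment his chosen vertices form a resolving set, and the size of that set equals the number of moves he has made so far, which is bounded by these caps.

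For (i), I would invoke the hypothesis $o(G)=\mathcal{R}$, which in particular means ${\rm R}^*$ has a winning strategy in the S-game. Playing according to that strategy, at some point the vertices ${\rm R}^*$ has selected form a resolving set; by the counting fact for the S-game this resolving set has cardinality at most $\lfloor \frac{n(G)}{2}\rfloor$. Since $\dim(G)$ is the minimum cardinality of a resolving set, this immediately gives $\dim(G)\le \lfloor \frac{n(G)}{2}\rfloor$. (Note the S-game is the right game to use here, since it is the one giving the sharper cap, as ${\rm R}^*$ is at the disadvantage of moving second.)

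For (ii), I would argue contrapositively that ${\rm R}^*$ cannot win either game. Assume $\dim(G)\ge \lceil \frac{n(G)}{2}\rceil+1$. In the R-game ${\rm R}^*$ holds at most $\lceil \frac{n(G)}{2}\rceil$ vertices, and in the S-game he holds at most $\lfloor \frac{n(G)}{2}\rfloor\le \lceil \frac{n(G)}{2}\rceil$ vertices; in both cases this is strictly smaller than $\dim(G)$, so his selected vertices can never constitute a resolving set, no matter how he plays. Thus ${\rm R}^*$ has no winning strategy in either game, which means ${\rm S}^*$ wins both. Since the outcome $\widetilde{\mathcal{N}}$ never occurs (as already observed in this section), the only remaining possibility is $o(G)=\mathcal{S}$.

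There is no deep obstacle here; the argument is purely a pigeonhole count on the number of moves. The only care required is the floor/ceiling bookkeeping — keeping straight that moving second costs ${\rm R}^*$ a potential vertex when $n(G)$ is odd, so that the S-game yields the bound $\lfloor \frac{n(G)}{2}\rfloor$ used in (i), while the R-game bound $\lceil \frac{n(G)}{2}\rceil$ governs (ii) — together with the observation that a winning resolving set for ${\rm R}^*$ is never larger than his total move count.
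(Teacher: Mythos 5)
Your proposal is correct and follows essentially the same move-counting argument as the paper: for (i) the paper likewise considers the S-game and bounds the size of ${\rm R}^*$'s final (resolving) set by $\lfloor n(G)/2\rfloor$, and for (ii) it likewise notes that ${\rm R}^*$ holds at most $\lceil n(G)/2\rceil$ vertices in either game, so he can never assemble a resolving set. The only cosmetic difference is that your appeal to the non-occurrence of $\widetilde{\mathcal{N}}$ in (ii) is not needed, since showing ${\rm S}^*$ wins both games already yields $o(G)=\mathcal{S}$ directly from the definition.
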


\begin{proof}
(i) Suppose that $o(G)=\mathcal{R}$ and consider the S-game. After the game is finished, ${\rm R}^*$ has clearly selected at most $\lfloor\frac{n(G)}{2}\rfloor$ vertices. As the set of vertices selected by ${\rm R}^*$ forms a resolving set of $G$, we conclude that $\dim(G)\leq \lfloor\frac{n(G)}{2}\rfloor$.

(ii) No matter whether the R-game or the S-game is played, ${\rm R}^*$ selects at most $\lceil\frac{n(G)}{2}\rceil$ vertices by the end of the game. As $\dim(G) \ge \lceil\frac{n(G)}{2}\rceil+1$, these vertices do not form a resolving set of $G$, hence ${\rm S}^*$ wins the R-game as well as the S-game.
\end{proof}

If $n\ge4$, then $\dim(K_n)=n-1 \ge \lceil\frac{n}{2}\rceil+1$, thus $o(K_n)=\mathcal{S}$ by Proposition~\ref{propR}(ii). Similarly, let $B_m$ be the graph obtained from $m\ge 5$ disjoint copies of $C_4$ by identifying a vertex from each $C_4$ at a common vertex. Then $n(B_m) = 3m+1$ and $\dim(B_m)=2m-1 \ge \left\lceil\frac{n(B_m)}{2}\right\rceil+1$; thus $o(B_m)=\mathcal{S}$ by Proposition~\ref{propR}(ii).

Next, we compare $R_{\rm MB}(G)$ with $R'_{\rm MB}(G)$ and $S_{\rm MB}(G)$ with $S'_{\rm MB}(G)$. To this end, we  consider the possibility that a player is allowed to skip a move; equivalently, a player allows the other player to select two vertices in one move. The observation that skipping offers no advantage to a player in the Maker-Breaker domination game was proved in~\cite{gledel-2019}. We next show that a parallel argument works for the Maker-Breaker game played on an arbitrary hypergraph.

\begin{lemma} {\rm (No-Skip Lemma)}
\label{lem:no-skip}
If the Maker-Breaker game is played on a hypergraph $H$, then in an optimal strategy of ${\rm R}^*$ to win in the minimum number of moves it is never an advantage for him to skip a move. Moreover, it never disadvantages ${\rm R}^*$ for ${\rm S}^*$ to skip a move.
\end{lemma}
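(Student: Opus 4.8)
The plan is to prove the two claims by a strategy-stealing argument, formalizing the intuition that ${\rm R}^*$ gains nothing by declining to place a vertex and loses nothing when ${\rm S}^*$ declines. Throughout, recall that in a Maker-Breaker game the hyperedges are monotone in the sense that any superset of a winning set for ${\rm R}^*$ is again winning; this monotonicity is the engine of the whole argument, since extra vertices in ${\rm R}^*$'s possession can never hurt him.

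\medskip\noindent\textbf{First claim (${\rm R}^*$ skipping is no advantage).} Consider a hypothetical game in which ${\rm R}^*$, following some strategy $\sigma$, reaches a winning position in the minimum number of moves while being permitted to skip at some turns. I would argue that ${\rm R}^*$ can convert $\sigma$ into a no-skip strategy $\sigma'$ that wins in at most as many moves. The idea is to process the game move by move: whenever $\sigma$ instructs ${\rm R}^*$ to skip, $\sigma'$ instead has him play an arbitrary still-available vertex (such a vertex exists unless the board is full, a boundary case handled separately). From that point on, ${\rm R}^*$ carries this ``extra'' chosen vertex as a bonus. The key bookkeeping device is an injection from the set of vertices $\sigma'$ tells ${\rm R}^*$ to claim into the set $\sigma$ would have claimed together with the bonus reservoir: at every stage the vertices held by ${\rm R}^*$ under $\sigma'$ form a superset of those held under $\sigma$ (after relabeling the arbitrary filler moves). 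Whenever $\sigma$ reaches a resolving set, so does $\sigma'$, by monotonicity, and it does so no later. The only subtlety is that $\sigma'$ must respond to ${\rm S}^*$'s actual moves; but since ${\rm S}^*$ is the adversary and the vertex ${\rm R}^*$ claimed as filler may later be the one $\sigma$ wanted, I maintain the correspondence so that ${\rm R}^*$'s real moves under $\sigma'$ always dominate the imagined moves under $\sigma$. This yields a no-skip strategy winning in at most the same number of moves, proving that skipping is never an advantage for minimizing ${\rm R}^*$'s move count.

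\medskip\noindent\textbf{Second claim (${\rm S}^*$ skipping does not disadvantage ${\rm R}^*$).} Here I would fix a winning no-skip strategy $\sigma$ for ${\rm R}^*$ (guaranteed whenever he can win at all) and show it still wins, in the same number of moves, if ${\rm S}^*$ is allowed to skip. When ${\rm S}^*$ skips a turn, ${\rm R}^*$ simply imagines that ${\rm S}^*$ played some available vertex $v$ and responds with $\sigma$ as though $v$ had been claimed by ${\rm S}^*$. Since ${\rm S}^*$ actually holding fewer vertices is, for ${\rm R}^*$, at least as favorable as ${\rm S}^*$ holding more, the position reached is componentwise at least as good for ${\rm R}^*$ as the one $\sigma$ was designed against: ${\rm R}^*$'s own vertices are unchanged while the pool of vertices still available to him is a superset. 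Again invoking monotonicity of the resolving-set hyperedges, the resolving set that $\sigma$ would have completed is completed here as well, and no later.

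\medskip The step I expect to be the main obstacle is the bookkeeping in the first claim: one must be careful that the filler vertex ${\rm R}^*$ grabs in place of a skip does not collide with a vertex that the imagined strategy $\sigma$ later wishes to claim, and that the maintained superset relation survives ${\rm S}^*$'s interleaved responses. The clean way to handle this is to track a partial injection from $\sigma$'s claimed vertices into $\sigma'$'s claimed vertices that is updated at each round, rerouting a filler move to ``cover'' whichever vertex $\sigma$ requests if it was already taken. Once this correspondence is set up as an invariant, both the ``never an advantage'' and ``never a disadvantage'' conclusions follow uniformly, and the minimality of the move count is immediate because $\sigma'$ never uses more rounds than $\sigma$.
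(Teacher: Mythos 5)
Your proof is correct and follows essentially the same route as the paper: the imagination strategy in which a skipped move is replaced by an arbitrary legal vertex, with a re-imagined substitute whenever a collision with an already-claimed vertex occurs, together with the (implicit in the paper, explicit in your write-up) monotonicity of Maker's winning sets. The only cosmetic difference is that you establish the first claim by converting ${\rm R}^*$'s own skip-strategy into a no-skip one, whereas the paper obtains it by letting ${\rm S}^*$ run the parallel imagination strategy; both yield the same conclusion.
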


\begin{proof}
Suppose the R-game or the S-game is played. Let ${\rm R}^*$ and ${\rm S}^*$ play optimally until ${\rm S}^*$ decides to skip a move. Then ${\rm R}^*$ imagines that ${\rm S}^*$ played an arbitrary legal move $w$, and replies optimally. ${\rm R}^*$ continues to use this strategy until the end of the game. It may happen that in the course of the game ${\rm S}^*$ selects a vertex which was already selected in the imagined game of ${\rm R}^*$. In that case, ${\rm R}^*$ imagines that  some other legal move has been played by ${\rm S}^*$. In this way, the game on $G$ will finish in no more than the minimum number of moves played in the usual Maker-Breaker game. With a strategy of ${\rm S}^*$ parallel to the above strategy of ${\rm R}^*$, it also follows that it is never an advantage for ${\rm R}^*$ to skip a move.
\end{proof}

No-Skip Lemma quickly implies the announced comparison of $R_{\rm MB}(G)$ with $R'_{\rm MB}(G)$ and $S_{\rm MB}(G)$ with $S'_{\rm MB}(G)$.

\begin{proposition}\label{prop:relations}
If $G$ is a connected graph, then the following properties hold.
\begin{itemize}
\item[\emph{(i)}] If $o(G)=\mathcal{R}$, then $R'_{\rm MB}(G) \ge R_{\rm MB}(G) \ge \dim(G)$.
\item[\emph{(ii)}] If $o(G)=\mathcal{S}$, then $S_{\rm MB}(G) \ge S'_{\rm MB}(G)$.
\end{itemize}
\end{proposition}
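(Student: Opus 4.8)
The plan is to deduce both inequalities from the No-Skip Lemma (Lemma~\ref{lem:no-skip}), after first settling the easy bound $R_{\rm MB}(G)\ge \dim(G)$ by hand: whenever ${\rm R}^*$ wins, the set of vertices he has claimed is a resolving set, every resolving set has at least $\dim(G)$ elements, and ${\rm R}^*$ claims exactly one vertex per move, so he has made at least $\dim(G)$ moves. (The hypothesis $o(G)=\mathcal{R}$ guarantees $R_{\rm MB}(G)<\infty$ and $R'_{\rm MB}(G)<\infty$, so all quantities are well defined.) The observation driving the remaining inequalities is that the R-game and the S-game differ only by a single shift: the S-game is the move sequence ${\rm S}^*,{\rm R}^*,{\rm S}^*,{\rm R}^*,\ldots$, so if ${\rm S}^*$ forgoes its opening move the play becomes ${\rm R}^*,{\rm S}^*,{\rm R}^*,\ldots$, which is exactly the R-game; symmetrically, the R-game in which ${\rm R}^*$ skips its opening move is exactly the S-game.

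For (i) I would start from an optimal strategy of ${\rm R}^*$ in the S-game that wins in $R'_{\rm MB}(G)$ of his moves and then imagine that ${\rm S}^*$ skips its first move. By the No-Skip Lemma it never disadvantages ${\rm R}^*$ for ${\rm S}^*$ to skip a move, so ${\rm R}^*$ still wins and in no more than $R'_{\rm MB}(G)$ moves; since the resulting play is an R-game, this yields $R_{\rm MB}(G)\le R'_{\rm MB}(G)$, and combined with the bound above we get $R'_{\rm MB}(G)\ge R_{\rm MB}(G)\ge \dim(G)$.

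Part (ii) is the mirror image, run from the Spoiler's side. An entirely analogous imaginary-game argument, now carried out by ${\rm S}^*$---she imagines a legal move of ${\rm R}^*$ whenever ${\rm R}^*$ skips, and replies optimally---shows that it never disadvantages ${\rm S}^*$ for ${\rm R}^*$ to skip a move. Hence, starting from an optimal winning strategy of ${\rm S}^*$ in the R-game that wins in $S_{\rm MB}(G)$ of her moves and letting ${\rm R}^*$ skip its opening move, ${\rm S}^*$ still wins in at most $S_{\rm MB}(G)$ moves; the resulting play is an S-game, so $S'_{\rm MB}(G)\le S_{\rm MB}(G)$. The only point needing genuine care is translating the qualitative phrasing of the No-Skip Lemma (``never an advantage'', ``never a disadvantage'') into these quantitative move-count inequalities, and, for (ii), supplying the Spoiler-side form of the lemma in place of the literally stated Resolver-side form; one should also check that the lemma, stated for a skip occurring at an arbitrary stage, covers the degenerate case of skipping the very first move---which it does, since the clause ``play optimally until the skip'' is then vacuous.
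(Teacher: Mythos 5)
Your proposal is correct and follows essentially the same route as the paper: both inequalities are reduced to the No-Skip Lemma via the observation that the R-game is the S-game with ${\rm S}^*$'s opening move skipped (and vice versa), with $R_{\rm MB}(G)\ge\dim(G)$ coming from the fact that Resolver's claimed vertices form a resolving set when he wins. If anything, you are slightly more careful than the paper in flagging that part (ii) needs the Spoiler-side analogue of the lemma (bounding ${\rm S}^*$'s move count when ${\rm R}^*$ skips), which the paper leaves implicit.
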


\begin{proof}
(i) The R-game can be viewed as the S-game in which ${\rm S}^*$ has skipped her first move. Hence the first inequality follows from Lemma~\ref{lem:no-skip} specialized to the MBRG. The second inequality follows from the fact that when the MBRG is finished, the set of vertices selected by ${\rm R}^*$ forms a resolving set of $G$.

(ii) The S-game can be viewed as the R-game in which ${\rm R}^*$ has skipped his first move, and thus the inequality follows.
\end{proof}

For the lexicographic product graph $G=C_m[K_2]$, where $m \ge 4$, we have $\dim(G)=R_{\rm MB}(G)=R'_{\rm BM}(G)=m=\left\lfloor\frac{n(G)}{2}\right\rfloor$ (see~\cite{hammack-2011} for the definition of the lexicographic product). This shows the sharpness of the bounds of Proposition~\ref{prop:relations}(i). On the other hand, at the end of Section~\ref{sec:pairing} we will construct a large family of graphs $G$ for which $R_{\rm MB}(G) >  \dim(G)$ holds.

\section{Twin equivalence classes and pairing resolving sets}
\label{sec:pairing}

In this section we consider two concepts that are very useful when dealing with the MBRG; this fact will be demonstrated in the rest of the paper.

The \emph{open neighborhood} of a vertex $v \in V(G)$ is $N(v)=\{u \in V(G) \mid uv \in E(G)\}$. Vertices $u$ and $v$ are \emph{twins} if $N(u)\setminus \{v\}=N(v)\setminus \{u\}$; notice that a vertex is its own twin. Hernando et al.~\cite[Lemma 2.7]{Hernando} observed that the twin relation is an equivalence relation and that an equivalence class under it, hereafter called \emph{a twin equivalence class}, induces either a clique or an independent set. We recall the following well-known fact.

\begin{observation} {\emph {\cite[Corollary 2.4]{Hernando}}}\label{obs_twin}
If $W$ is a resolving set of $G$ and $u$ and $v$ are distinct members of the same twin equivalence class of $G$,  then $W \cap \{u, v\} \neq \emptyset$.
\end{observation}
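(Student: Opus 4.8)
The plan is to prove the single underlying geometric fact that \emph{distinct twins have identical distance vectors to every third vertex}, and then let the definition of a resolving set do the rest. Concretely, I would first establish the claim that if $u$ and $v$ are twins and $z \in V(G)\setminus\{u,v\}$, then $d(u,z)=d(v,z)$. Granting this, the Observation is immediate: if some resolving set $W$ satisfied $W\cap\{u,v\}=\emptyset$, then every $z\in W$ would lie in $V(G)\setminus\{u,v\}$, so $d(u,z)=d(v,z)$ would hold for all $z\in W$; thus $u$ and $v$ would have the same vector of distances to $W$, contradicting that $W$ resolves the pair $\{u,v\}$. Hence $W\cap\{u,v\}\neq\emptyset$.

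The heart of the matter is therefore the distance claim, which I would prove by a path-exchange argument establishing $d(v,z)\le d(u,z)$; the reverse inequality then follows by the symmetry of the twin relation. Fix a shortest path $u=x_0,x_1,\dots,x_k=z$; since $z\neq u$ we have $k\ge 1$, so $x_1\in N(u)$. If $x_1\neq v$, then $x_1\in N(u)\setminus\{v\}=N(v)\setminus\{u\}\subseteq N(v)$, and replacing the initial vertex $u$ by $v$ yields a $v$--$z$ walk of the same length $k$, so $d(v,z)\le k=d(u,z)$.

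The remaining possibility is that the first step of every shortest path from $u$ lands on the twin $v$ itself, and this is the step I expect to be the main obstacle, since it can occur only when $u$ and $v$ are adjacent, i.e.\ when the twin class is a clique in the sense of \cite[Lemma 2.7]{Hernando}. I would dispose of it by contradiction: suppose $d(u,z)\neq d(v,z)$, say $d(u,z)<d(v,z)$. The exchange argument above forces every shortest $u$--$z$ path to begin with the edge $uv$, for otherwise we would get $d(v,z)\le d(u,z)<d(v,z)$. But then deleting the leading vertex $u$ from such a path produces a $v$--$z$ path of length $d(u,z)-1$, whence $d(v,z)\le d(u,z)-1<d(u,z)$, again contradicting $d(u,z)<d(v,z)$. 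Therefore $d(u,z)=d(v,z)$, completing the claim and hence the Observation.

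As an alternative organization one could split along the dichotomy of \cite[Lemma 2.7]{Hernando}: when the twin class is an independent set one has $N(u)=N(v)$ outright and the equality of distances is transparent, whereas the clique case is precisely the subtle situation handled above. I prefer the unified path-exchange formulation because it dispatches both cases simultaneously, but the dichotomy makes clear that adjacency of the two twins is the only genuine source of difficulty.
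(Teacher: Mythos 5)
Your proof is correct; the key fact that distinct twins are equidistant from every third vertex is exactly the standard argument behind this statement, and your handling of the adjacent-twin case (where the shortest path's first step could be the twin itself) is sound. Note that the paper itself offers no proof here --- it quotes the result from \cite[Corollary 2.4]{Hernando} as a known fact --- so you have simply supplied the canonical argument in full.
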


Here is now a relation of twin equivalence classes with the MBRG.

\begin{proposition}\label{prop_twin}
Let $G$ be a connected graph with $n(G)\ge 4$.
\begin{itemize}
\item[{\em (a)}] If $G$ has a twin equivalence class of cardinality at least $4$, then $o(G)=\mathcal{S}$ and $S_{\rm MB}(G)=S'_{\rm MB}(G)=2$.
\item[{\em (b)}] If $G$ has two distinct twin equivalence classes of cardinality at least $3$, then $o(G)=\mathcal{S}$ and $S_{\rm MB}(G)=S'_{\rm MB}(G)=2$.
\end{itemize}
\end{proposition}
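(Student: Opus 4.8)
The plan is to exhibit an explicit winning strategy for ${\rm S}^*$ that finishes in exactly two moves, and then to verify that two moves is optimal (i.e. that ${\rm S}^*$ cannot win on her first move alone). By Proposition~\ref{prop:relations}(ii), once we establish $o(G)=\mathcal{S}$ it suffices to bound $S_{\rm MB}(G)$ from above by $2$ and $S'_{\rm MB}(G)$ from below by $2$; since $S_{\rm MB}(G)\ge S'_{\rm MB}(G)$ this pins both invariants to $2$. The whole argument will rest on Observation~\ref{obs_twin}: to \emph{resolve} the graph, ${\rm R}^*$ must eventually occupy all but at most one vertex of \emph{every} twin equivalence class. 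The strategy of ${\rm S}^*$ is therefore to target a single class (in case (a)) or coordinate a threat across two classes (in case (b)) so that ${\rm R}^*$ is forced to leave at least two vertices of some class unclaimed.

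For part (a), suppose $T$ is a twin equivalence class with $|T|\ge 4$. First I would argue that ${\rm S}^*$ wins the R-game in two moves: whatever ${\rm R}^*$'s opening move is, ${\rm S}^*$ plays any vertex of $T$, and on her second move she plays another vertex of $T$; since $|T|\ge 4$, after ${\rm R}^*$ has made two moves and ${\rm S}^*$ has claimed two vertices of $T$, there remain at least two unclaimed vertices of $T$ that ${\rm R}^*$ can never both obtain — indeed ${\rm S}^*$ simply keeps answering any ${\rm R}^*$ move inside $T$ with another vertex of $T$ until two vertices of $T$ are permanently denied to ${\rm R}^*$. The cleanest formulation is: ${\rm S}^*$ maintains the invariant that she holds at least as many vertices of $T$ as ${\rm R}^*$ does, which she can do because she has claimed two vertices of $T$ after her second move, and thereafter she mirrors inside $T$; since $|T|\ge 4$, the two vertices of $T$ she holds are never both needed, so ${\rm R}^*$ cannot claim $|T|-1$ of them. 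By Observation~\ref{obs_twin} ${\rm R}^*$ then never forms a resolving set. The same two opening moves of ${\rm S}^*$ work in the S-game, so $S_{\rm MB}(G)\le 2$ and $S'_{\rm MB}(G)\le 2$.

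For part (b), let $T_1$ and $T_2$ be distinct twin classes with $|T_1|,|T_2|\ge 3$. Here the subtlety is that three elements alone do not let ${\rm S}^*$ deny two vertices of a single class in two moves while also reacting to ${\rm R}^*$; so ${\rm S}^*$ must hedge between the two classes. I would have ${\rm S}^*$ respond to ${\rm R}^*$'s moves so as to guarantee that in at least one of $T_1,T_2$ she ends up holding two vertices. Concretely, ${\rm S}^*$ plays her first move in whichever class ${\rm R}^*$ has not (yet) heavily invested in, then on her second move completes a pair in one class; the pigeonhole point is that ${\rm R}^*$'s two moves cannot simultaneously secure two of the three vertices in \emph{both} classes, so ${\rm S}^*$ can always claim two vertices of some $T_i$, leaving only one for ${\rm R}^*$ out of the three — again violating Observation~\ref{obs_twin}.

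The main obstacle, and the part requiring the most care, is the bookkeeping in part (b): I must check every branch of ${\rm R}^*$'s first two moves (both in $T_1$, both in $T_2$, one in each, or one or both outside $T_1\cup T_2$) and specify ${\rm S}^*$'s reply in each so that the "two vertices of some $T_i$" invariant is maintained, and that this is achievable by move two of ${\rm S}^*$. The optimality half ($S_{\rm MB}(G),S'_{\rm MB}(G)\ge 2$) is routine: a single vertex occupied by ${\rm S}^*$ cannot by itself block ${\rm R}^*$ from eventually building a resolving set, since ${\rm R}^*$ can claim all remaining vertices, so ${\rm S}^*$ needs at least two moves to win, giving $S'_{\rm MB}(G)\ge 2$; combined with the upper bounds and Proposition~\ref{prop:relations}(ii) this yields $S_{\rm MB}(G)=S'_{\rm MB}(G)=2$ and hence $o(G)=\mathcal{S}$ in both cases.
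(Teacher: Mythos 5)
Your proposal is correct and follows essentially the same route as the paper: both rest on Observation~\ref{obs_twin} together with an explicit two-move strategy for ${\rm S}^*$ that grabs two vertices of a single twin class, where in case (b) the key point (present in both arguments) is that ${\rm S}^*$ opens in whichever of the two classes ${\rm R}^*$ has not yet entered. The only difference worth noting is that you justify the lower bound $S'_{\rm MB}(G)\ge 2$ explicitly (a single Spoiler vertex $v$ cannot win since $V(G)\setminus\{v\}$ is always a resolving set), a point the paper leaves implicit.
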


\begin{proof}
Let $W$ be a resolving set of $G$.

(a) Let $Q = \{u_1,\ldots, u_k\} \subseteq V(G)$ be a twin equivalence class of $G$, where $k\ge 4$. Then $|W \cap Q| \ge k-1$ by Observation~\ref{obs_twin}. Since $k \ge 4$, we infer that ${\rm S}^*$ can occupy two vertices of $Q$ after her second move, regardless of whether ${\rm S}^*$ plays first or second. So, ${\rm R}^*$ can occupy at most $k-2$ vertices of $Q$, and thus ${\rm R}^*$ fails to occupy vertices that form a resolving set of $G$. Thus $o(G)=\mathcal{S}$ and $S_{\rm MB}(G)=S'_{\rm MB}(G)=2$.

(b) Let $Q_1'$ and $Q_2'$ be different twin equivalence classes of $G$, each of cardinality at least $3$. Clearly, $Q_1' \cap Q_2' = \emptyset$. Let $Q_1=\{u_1, u_2, u_3\}\subseteq Q_1'$ and $Q_2=\{u'_1, u'_2, u'_3\}\subseteq Q_2'$. Then $|W \cap Q_1| \ge 2$ and $|W \cap Q_2| \ge 2$ by Observation~\ref{obs_twin}. Note that ${\rm S}^*$ can occupy three vertices of $Q_1 \cup Q_2$ after her third move, regardless of whether ${\rm S}^*$ plays first or second. So, after the third move by ${\rm S}^*$, there are the following four possibilities: (i) ${\rm S}^*$ occupies all vertices of $Q_1$; (ii) ${\rm S}^*$ occupies two vertices of $Q_1$ and one vertex of $Q_2$; (iii) ${\rm S}^*$ occupies one vertex of $Q_1$ and two vertices of $Q_2$; (iv) ${\rm S}^*$ occupies all three vertices of $Q_2$. In each case, ${\rm R}^*$ fails to occupy vertices that form a resolving set of $G$. Thus $o(G)=\mathcal{S}$.

Next, we determine $S_{\rm MB}(G)$ and $S'_{\rm MB}(G)$. If ${\rm S}^*$ plays first, then after her second move, ${\rm S}^*$ can occupy two vertices of $Q_1$. If ${\rm S}^*$ plays second, then after her second move, ${\rm S}^*$ can occupy two vertices of $Q_1$ (if ${\rm R}^*$ occupies a vertex in $Q_2$ in his first move) or ${\rm S}^*$ can occupy two vertices of $Q_2$ (if ${\rm R}^*$ occupies a vertex in $Q_1$ in his first move). So, $S_{\rm MB}(G)=S'_{\rm MB}(G)=2$.
\end{proof}

Let $[k]$ denote the set $\{1,\ldots, k\}$. Let $A=\{\{u_1,w_1\}, \ldots, \{u_k, w_k\}\}$ be a set of $2$-subsets of $V(G)$ such that $|\cup_{i=1}^k\{u_i, w_i\}|=2k$. We say that $A$ is a \emph{pairing resolving set} of $G$ if every set $\{x_1,\ldots, x_k\}$, where $x_i\in \{u_i, w_i\}$ and $i\in [k]$, is a resolving set of $G$.

\begin{proposition}\label{prop:pairing-in-hypergraphs}
If a graph $G$ admits a pairing resolving set, then $o(G)=\mathcal{R}$.
\end{proposition}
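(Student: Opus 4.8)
The plan is to give Resolver an explicit pairing strategy: since $G$ admits a pairing resolving set $A=\{\{u_1,w_1\},\ldots,\{u_k,w_k\}\}$, I will have $\mathrm{R}^*$ maintain the invariant that by responding appropriately he secures exactly one vertex from each pair $\{u_i,w_i\}$, which by the definition of a pairing resolving set yields a resolving set of $G$. The strategy is the standard pairing response: whenever $\mathrm{S}^*$ selects a vertex $u_i$ (or $w_i$) belonging to one of the pairs, $\mathrm{R}^*$ immediately responds by selecting the partner $w_i$ (resp.\ $u_i$) of that pair, provided it is still free. If $\mathrm{S}^*$ plays a vertex that is not in $\bigcup_{i=1}^k\{u_i,w_i\}$, or plays in a pair whose partner $\mathrm{R}^*$ already owns, then $\mathrm{R}^*$ makes an arbitrary free move — for instance claiming both vertices of some not-yet-touched pair over his next turns, or simply grabbing either element of a fresh pair.

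The key steps, in order, are as follows. First I would verify that the pairing response is always legal: because the $2k$ vertices $u_i,w_i$ are distinct (the condition $|\bigcup_{i=1}^k\{u_i,w_i\}|=2k$), each pair is disjoint from every other, so when $\mathrm{S}^*$ takes one element of a pair, its partner has not been consumed by activity in any other pair; the only way the partner could be gone is if $\mathrm{R}^*$ had taken it earlier, but in that case $\mathrm{R}^*$ already owns a vertex of that pair and needs no response. Second, I would argue the invariant is preserved turn by turn: after each of $\mathrm{R}^*$'s replies, every pair $\{u_i,w_i\}$ satisfies the property that $\mathrm{R}^*$ owns at least one of its two vertices, or at least $\mathrm{S}^*$ does not own both — more precisely, $\mathrm{S}^*$ can never capture both vertices of a pair, since the moment she takes the first, $\mathrm{R}^*$ takes the second. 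Third, since the game is finite and $\mathrm{S}^*$ can occupy at most one vertex from each pair, by the end of the game $\mathrm{R}^*$ holds at least one vertex $x_i\in\{u_i,w_i\}$ for every $i\in[k]$; the set $\{x_1,\ldots,x_k\}$ is then a resolving set of $G$ by hypothesis, so $\mathrm{R}^*$ has won.

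This argument works verbatim whether $\mathrm{R}^*$ moves first or second, which is exactly what is required to conclude $o(G)=\mathcal{R}$ (Resolver wins both the R-game and the S-game). In the S-game, $\mathrm{S}^*$ opens and $\mathrm{R}^*$ simply responds as above; in the R-game, $\mathrm{R}^*$'s opening move can be taken to be either element of any pair (thereby pre-securing that pair), after which he reverts to the pairing response for the remainder.

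The main obstacle I anticipate is purely bookkeeping rather than conceptual: one must handle cleanly the case where $\mathrm{S}^*$ plays \emph{outside} the union of the pairs, or plays into a pair already half-owned by $\mathrm{R}^*$, so that $\mathrm{R}^*$'s scheduled response is undefined. In those situations $\mathrm{R}^*$ has a ``free'' move, and I must make sure this free move does not destroy the invariant — the safe choice is to let $\mathrm{R}^*$ claim a vertex of an as-yet-untouched pair (or, if none remains untouched, any free vertex), since securing an additional pair element can only help. The subtlety is ensuring that the pool of free pair-vertices never runs out at a moment when a response is genuinely needed, but the disjointness of the pairs guarantees that each of $\mathrm{S}^*$'s ``threatening'' moves always leaves its own partner available, so no conflict ever arises. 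Making this precise is the one place the proof needs care.
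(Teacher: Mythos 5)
Your proposal is correct and takes essentially the same approach as the paper: the paper's proof likewise has ${\rm R}^*$ follow the standard pairing strategy so that he holds one vertex of each pair of $A$ after his $k^{\rm th}$ move, in both the R-game and the S-game. You merely spell out the response/free-move bookkeeping that the paper leaves implicit.
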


\begin{proof}
Let $A$ be a pairing resolving set of $G$ with $|A|=k$. Regardless of whether the R-game or the S-game is played, ${\rm R}^*$ is guaranteed to select a vertex of each pair from $A$ after his $k^{\rm th}$ move. Thus, the vertices chosen by ${\rm R}^*$, after his $k^{\rm th}$ move, form a resolving set of $G$. So, $o(G)=\mathcal{R}$.
\end{proof}

Despite its simplicity, Proposition~\ref{prop:pairing-in-hypergraphs} has fine applications. (We note in passing that for the Maker-Breaker domination game a parallel concept of {\em pairing dominating sets} was introduced in~\cite{gledel-2019}.) If $\dim(G)=k$ and $A$ is a pairing resolving set of $G$ with $|A| = k$, then we say that $A$ is a {\em dim-pairing resolving set} of $G$; this, together with Proposition~\ref{prop:pairing-in-hypergraphs}, immediately yields the following

\begin{corollary}\label{cor:pairing_resolving}
If $G$ admits a dim-pairing resolving set, then $R_{\rm MB}(G)=R'_{\rm MB}(G)=\dim(G)$.
\end{corollary}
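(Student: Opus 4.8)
The plan is to prove Corollary~\ref{cor:pairing_resolving} directly from Proposition~\ref{prop:pairing-in-hypergraphs} and Proposition~\ref{prop:relations}(i), since a dim-pairing resolving set is by definition just a pairing resolving set whose cardinality equals $\dim(G)$. The strategy has two halves: first establish the upper bounds $R_{\rm MB}(G)\le \dim(G)$ and $R'_{\rm MB}(G)\le \dim(G)$ from the existence of the pairing structure, and then invoke the already-proved lower bounds to force equality.

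For the upper-bound half, let $A=\{\{u_1,w_1\},\ldots,\{u_k,w_k\}\}$ be the dim-pairing resolving set, so $k=\dim(G)$. By Proposition~\ref{prop:pairing-in-hypergraphs} the existence of $A$ gives $o(G)=\mathcal{R}$, so all four invariants are finite and comparable. The crucial observation is that the pairing strategy analyzed in the proof of Proposition~\ref{prop:pairing-in-hypergraphs} lets ${\rm R}^*$ complete a resolving set after exactly his $k^{\rm th}$ move in \emph{both} the R-game and the S-game: whenever ${\rm S}^*$ plays a vertex, ${\rm R}^*$ responds in the same pair, and any move of ${\rm S}^*$ outside all pairs (or inside a pair already split) is answered by ${\rm R}^*$ claiming the free vertex of some still-untouched pair. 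Thus after $k$ moves ${\rm R}^*$ holds one vertex from every pair, and that transversal is a resolving set since $A$ is pairing. This shows $R_{\rm MB}(G)\le k=\dim(G)$ and $R'_{\rm MB}(G)\le k=\dim(G)$.

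For the lower-bound half, I would simply cite Proposition~\ref{prop:relations}(i): since $o(G)=\mathcal{R}$, we have $R'_{\rm MB}(G)\ge R_{\rm MB}(G)\ge \dim(G)$. Combining the two chains of inequalities,
\[
\dim(G)\le R_{\rm MB}(G)\le R'_{\rm MB}(G)\le \dim(G),
\]
(using $R_{\rm MB}(G)\le R'_{\rm MB}(G)$ from Proposition~\ref{prop:relations}(i) as well), which pins down $R_{\rm MB}(G)=R'_{\rm MB}(G)=\dim(G)$.

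I do not anticipate a serious obstacle here, as the corollary is essentially a bookkeeping consequence of results already in hand; the authors even flag it as immediate. The only point requiring a modicum of care is making sure the pairing strategy genuinely terminates in exactly $k$ of ${\rm R}^*$'s moves in the S-game, where ${\rm S}^*$ moves first and might try to ``waste'' ${\rm R}^*$'s responses on vertices lying outside $\cup_i\{u_i,w_i\}$; the fix is the standard Maker-Breaker device of having ${\rm R}^*$ treat such an off-pairing move as a skipped move and claim an arbitrary free paired vertex, which is exactly the behavior licensed by the No-Skip Lemma (Lemma~\ref{lem:no-skip}). Since this mechanism is already embedded in the proof of Proposition~\ref{prop:pairing-in-hypergraphs}, the corollary's proof reduces to assembling the inequalities.
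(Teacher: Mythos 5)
Your proof is correct and matches the paper's (implicit) argument exactly: the paper derives the corollary immediately from Proposition~\ref{prop:pairing-in-hypergraphs} (the pairing strategy gives ${\rm R}^*$ a resolving set after his $\dim(G)^{\rm th}$ move in either game) combined with the lower bound $R'_{\rm MB}(G)\ge R_{\rm MB}(G)\ge\dim(G)$ from Proposition~\ref{prop:relations}(i). Your extra care about off-pairing moves by ${\rm S}^*$ is sound and consistent with the strategy already described in the proof of Proposition~\ref{prop:pairing-in-hypergraphs}.
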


For an example, consider the graph $G$ of Fig.~\ref{fig_pair_resolving}. The graph $G$ has the following dim-pairing resolving sets:
\begin{itemize}
\item $\{\{u_1, w_1\}, \{u_2, w_2\}, \{u_3, w_3\}, \{u_4, v_4\}\}$,
\item  $\{\{u_1, w_1\}, \{u_2, w_2\}, \{u_3, w_3\}, \{u_4, w_4\}\}$, and
\item $\{\{u_1, w_1\}, \{u_2, w_2\}, \{u_3, w_3\}, \{v_4, w_4\}\}$,
\end{itemize}
hence Corollary~\ref{cor:pairing_resolving} implies that $R_{\rm MB}(G) = R'_{\rm MB}(G) = 4$.

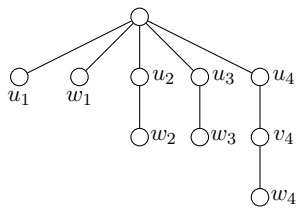
\begin{figure}[ht!]
\centering
\begin{tikzpicture}[scale=.8, transform shape]

\node [draw, shape=circle, scale=.8] (0) at  (1.5, 0) {};
\node [draw, shape=circle, scale=.8] (a) at  (-0.5, -1) {};
\node [draw, shape=circle, scale=.8] (b) at  (0.5, -1) {};
\node [draw, shape=circle, scale=.8] (2) at  (1.5, -1) {};
\node [draw, shape=circle, scale=.8] (22) at  (1.5, -2) {};
\node [draw, shape=circle, scale=.8] (3) at  (2.5, -1) {};
\node [draw, shape=circle, scale=.8] (33) at  (2.5, -2) {};
\node [draw, shape=circle, scale=.8] (4) at  (3.5, -1) {};
\node [draw, shape=circle, scale=.8] (44) at  (3.5, -2) {};
\node [draw, shape=circle, scale=.8] (444) at  (3.5, -3) {};

\node [scale=1] at (-0.5,-1.35) {$u_1$};
\node [scale=1] at (0.5,-1.35) {$w_1$};
\node [scale=1] at (1.9,-1) {$u_2$};
\node [scale=1] at (1.9,-2) {$w_2$};
\node [scale=1] at (2.9,-1) {$u_3$};
\node [scale=1] at (2.9,-2) {$w_3$};
\node [scale=1] at (3.9,-1) {$u_4$};
\node [scale=1] at (3.9,-2) {$v_4$};
\node [scale=1] at (3.9,-3) {$w_4$};

\draw(a)--(0)--(b);\draw(0)--(4)--(44)--(444);\draw(22)--(2)--(0)--(3)--(33);

\end{tikzpicture}
\caption{A graph which admits three dim-pairing resolving sets.}\label{fig_pair_resolving}
\end{figure}

To conclude the section we are going to show how pairing resolving sets can be applied to construct a large family of graphs $G$ for which $R_{\rm MB}(G) >  \dim(G)$ holds.

\begin{definition}\label{Gk}
If $k\geq 3$, then let $G_k$ be a graph of order $k+2(2^k-1)$ with $V(G_k)=A\cup B\cup C$, where $A$, $B$, and $C$ are pairwise disjoint sets with $|A|=k$ and $|B|=2^k-1=|C|$. The edge set of $G_k$ is specified as follows: (i) each of the sets $A$, $B$, and $C$ induces a clique in $G_k$; (ii) indexing the elements of $B$ (and separately of $C$) by nonempty subsets of $A$, let $b_S\in B$ be adjacent to each vertex in $S\subseteq A$; (iii) let $b_S\in B$ be adjacent to $c_S\in C$ for each nonempty subset $S$ of $A$; (iv) there are no other edges.
\end{definition}

We will at times subscript a vertex $b\in B$ (and a vertex $c\in C$) by an element of $\mathbb{Z}_2^k$, where $1$ (resp., $0$) in the $j$-th coordinate indicates that $b$ is adjacent (resp., not adjacent) to the $j$-th vertex in $A$. See Fig.~\ref{binary} for $G_3$ and the labeling of its vertices. From now on, for a given vertex $x\in V(G)$ and an ordered set of vertices $S\subseteq V(G)$, by {\em $\code_S(x)$} we denote the vector of distances between $x$ and all the vertices in $S$.

\begin{figure}[ht]
\centering
\begin{tikzpicture}[scale=.67, transform shape]

\node [draw, shape=circle, scale=.8] (a1) at  (-1, 5) {};
\node [draw, shape=circle, scale=.8] (a2) at  (0, 3) {};
\node [draw, shape=circle, scale=.8] (a3) at  (-1, 1) {};

\node [draw, shape=circle, scale=.8] (b1) at  (4, 6) {};
\node [draw, shape=circle, scale=.8] (b2) at  (4, 5) {};
\node [draw, shape=circle, scale=.8] (b3) at  (4, 4) {};
\node [draw, shape=circle, scale=.8] (b4) at  (4, 3) {};
\node [draw, shape=circle, scale=.8] (b5) at  (4, 2) {};
\node [draw, shape=circle, scale=.8] (b6) at  (4, 1) {};
\node [draw, shape=circle, scale=.8] (b7) at  (4, 0) {};

\node [draw, shape=circle, scale=.8] (c1) at  (8, 6) {};
\node [draw, shape=circle, scale=.8] (c2) at  (8, 5) {};
\node [draw, shape=circle, scale=.8] (c3) at  (8, 4) {};
\node [draw, shape=circle, scale=.8] (c4) at  (8, 3) {};
\node [draw, shape=circle, scale=.8] (c5) at  (8, 2) {};
\node [draw, shape=circle, scale=.8] (c6) at  (8, 1) {};
\node [draw, shape=circle, scale=.8] (c7) at  (8, 0) {};

\node [scale=1.3] at (-1.4,5) {$a_1$};
\node [scale=1.3] at (-0.4,3) {$a_2$};
\node [scale=1.3] at (-1.4,1) {$a_3$};

\node [scale=1.3] at (4.5, 6.3) {$b_{100}$};
\node [scale=1.3] at (4.5, 5.3) {$b_{010}$};
\node [scale=1.3] at (4.5,4.3) {$b_{001}$};
\node [scale=1.3] at (4.5, 3.3) {$b_{110}$};
\node [scale=1.3] at (4.5, 2.3) {$b_{101}$};
\node [scale=1.3] at (4.5,1.3) {$b_{011}$};
\node [scale=1.3] at (4.5,0.3) {$b_{111}$};

\node [scale=1.3] at (8.65, 6) {$c_{100}$};
\node [scale=1.3] at (8.65, 5) {$c_{010}$};
\node [scale=1.3] at (8.65,4) {$c_{001}$};
\node [scale=1.3] at (8.65, 3) {$c_{110}$};
\node [scale=1.3] at (8.65, 2) {$c_{101}$};
\node [scale=1.3] at (8.65,1) {$c_{011}$};
\node [scale=1.3] at (8.65,0) {$c_{111}$};

\node [scale=1.3] at (4.9,-1.2) {$K_7$};
\node [scale=1.3] at (9,-1.2) {$K_7$};

\draw(a1)--(a2)--(a3)--(a1);
\draw(b1)--(c1);\draw(b2)--(c2);\draw(b3)--(c3);\draw(b4)--(c4);\draw(b5)--(c5);\draw(b6)--(c6);\draw(b7)--(c7);
\draw(b1)--(a1)--(b4);\draw(b5)--(a1)--(b7);\draw(b2)--(a2);\draw(a2)--(b4);\draw(b6)--(a2)--(b7);\draw(b3)--(a3)--(b5);\draw(b6)--(a3)--(b7);

\draw[thick,dashed] (4.35,3) ellipse (1.13cm and 4cm);
\draw[thick,dashed] (8.39,3) ellipse (1.18cm and 4cm);

\end{tikzpicture}
\caption{\small $G_3$ satisfying $o(G_3) = \mathcal{R}$ and $R_{\rm MB}(G_3)>\dim(G_3)=3$.}\label{binary}
\end{figure}
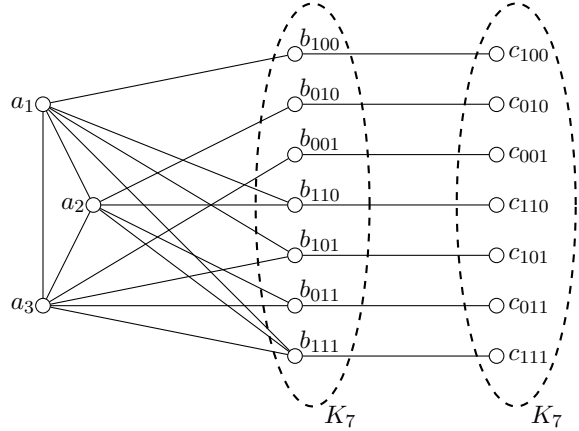

\begin{theorem}\label{Gk_claims}
If $k \ge 3$ and $G_k$ is as in Definition~\ref{Gk}, then the following holds. 
\begin{itemize}
\item[(i)] $\dim(G_k)=k$.
\item[(ii)] The set $A$ is the unique metric basis of $G_k$.
\item[(iii)] The set $\displaystyle\bigcup_{\emptyset\neq S\subseteq A}\{\{b_S,c_S\}\}$ is a pairing resolving set of $G_k$.
\end{itemize}
\end{theorem}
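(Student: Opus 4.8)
The plan is to understand the distance structure of $G_k$ precisely, then verify the three claims in order, using part (i) to bootstrap (ii) and using the distance calculations for (iii).

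First I would compute all pairwise distances in $G_k$. The set $A$ is a clique, $B$ is a clique, and $C$ is a clique; each $b_S\in B$ is joined to $c_S\in C$ and to exactly the vertices of $S\subseteq A$; there are no edges between $A$ and $C$ nor between distinct $B$–$C$ pairs. From this I expect: within $A$, distances are $1$; from a vertex $b_S\in B$ to a vertex $a_j\in A$, the distance is $1$ if $a_j\in S$ and $2$ otherwise (routing through some $b_{S'}$ containing $a_j$ and adjacent to $b_S$ in the $B$-clique); from $c_S\in C$ to $a_j\in A$ the distance is $2$ if $a_j\in S$ (via $b_S$) and $3$ otherwise. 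The crucial observation is that $\code_A(b_S)$ records exactly the characteristic vector of $S$ (entries $1$ on $S$, $2$ off $S$), so distinct vertices of $B$ receive distinct $A$-codes; likewise the vertices of $C$ get distinct $A$-codes (entries $2$ on $S$, $3$ off $S$), and no $b$-code collides with any $c$-code since $B$-codes use entries in $\{1,2\}$ while $C$-codes use entries in $\{2,3\}$ with at least one $3$. This shows $A$ resolves $B\cup C$, and since $A$ itself is resolved trivially, $A$ is a resolving set, giving $\dim(G_k)\le k$ and hence (i) once the lower bound is in hand.

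For the lower bound in (i) and the uniqueness in (ii), I would argue via twin equivalence classes and a counting/structural constraint. The pairs $\{b_S,c_S\}$ are \emph{not} twins in general, so I instead exploit that $A$ has size $k$ and that any resolving set must inject the codes of the $2^k-1$ vertices of $B$ (and of $C$) into the distance vectors determined by the chosen landmarks; the point is that a set omitting some $a_j\in A$ cannot distinguish the two vertices $b_S$ and $b_{S\triangle\{j\}}$ whenever these agree on all chosen landmarks. Concretely I would show that if $W$ is a resolving set and $a_j\notin W$, then $W$ fails to separate a suitable pair, forcing $A\subseteq W$ and thus $\dim(G_k)\ge k$; combined with (i) this also gives that $A$ is contained in every metric basis, and a minimum-cardinality metric basis then equals $A$, proving (ii). Establishing this separation obstruction cleanly — identifying, for each missing $a_j$, an explicit indistinguishable pair given \emph{any} other landmarks — is what I expect to be the main obstacle, since one must control distances from arbitrary vertices of $B\cup C$, not just from $A$.

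Finally, for (iii) I would show that $\bigcup_{\emptyset\neq S\subseteq A}\{\{b_S,c_S\}\}$ is a pairing resolving set: we must check that any transversal $T$ choosing exactly one of $b_S,c_S$ for each nonempty $S\subseteq A$ is a resolving set of $G_k$. The key computation is that from \emph{either} $b_S$ or $c_S$, the distances to the three classes are governed by $S$ in a way that lets $T$ recover $A$'s resolving power: for each $a_j\in A$, the set $S=\{a_j\}$ contributes either $b_{\{a_j\}}$ or $c_{\{a_j\}}$ to $T$, and from this vertex the vertices of $A$ are distinguished by whether their distance is small (namely $a_j$ itself, at distance $1$ or $2$) or large. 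More systematically, the $2^k-1$ chosen vertices have pairwise distinct $S$-labels, and I would verify that their combined distance vectors separate every pair in $A$, in $B$, in $C$, and across the classes, by reading off the characteristic-vector pattern established in the first step (each transversal vertex labeled $S$ behaves, toward $A\cup B\cup C$, as a function of $S$ that is injective in $S$). Once every pair of vertices is shown to be resolved by $T$ regardless of the transversal chosen, $A$ being a pairing resolving set of size $k=\dim(G_k)$ follows, and this is exactly what later yields $o(G_k)=\mathcal{R}$ with $R_{\rm MB}(G_k)>\dim(G_k)$ via the uniqueness in (ii).
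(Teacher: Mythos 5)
Your distance computations and the upper bound $\dim(G_k)\le k$ are correct and match the paper, and your outline for (iii) is essentially the paper's transversal-by-transversal case analysis (though the delicate cases there are the pairs inside $B$, inside $C$, and across $B$ and $C$, not the pairs in $A$). The genuine gap is your strategy for the lower bound in (i) and for uniqueness in (ii): you propose to show that if $W$ is a resolving set and $a_j\notin W$ then $W$ fails to separate some pair, ``forcing $A\subseteq W$.'' That claim is false, and it directly contradicts part (iii) of the theorem you are proving: every transversal of the pairs $\{b_S,c_S\}$ --- in particular the set $B$ itself --- is a resolving set of $G_k$ disjoint from $A$. (One checks quickly that $B$ resolves $G_k$: each $b_S$ is the unique vertex at distance $0$ from itself, $c_S$ is at distance $1$ from $b_S$ and $2$ from all other vertices of $B$, and $a_j$ is at distance $1$ from exactly the $2^{k-1}$ vertices $b_S$ with $a_j\in S$.) Hence no argument of the form ``missing $a_j$ leaves an unresolved pair'' can exist, and uniqueness of the basis cannot be obtained from ``$A$ is contained in every metric basis.''

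What is needed instead --- and what the paper does --- is a counting trade-off between $|S\cap A|$ and $|S\cap(V(G_k)-A)|$. If $|S\cap A|=t$, then the $2^{k-t}$ vertices of $B$ whose index sets have a prescribed intersection with $S\cap A$ all receive the same code from $S\cap A$, and the only vertices outside $A$ that can separate $b_T$ from $b_{T'}$ are those in $\{b_T,c_T,b_{T'},c_{T'}\}$; this forces $|S\cap(V(G_k)-A)|\ge 2^{k-t}-1\ge k-t$, giving $|S|\ge k$ and hence $\dim(G_k)=k$. For uniqueness one sharpens the same count: if $t\le k-2$ then $2^{k-t}-1\ge k-t+1$, and if $t=k-1$ then $2^{k-1}-1$ unresolved pairs in $B$ force at least $2^{k-1}-2\ge 2$ extra vertices, so in either case $|S|\ge k+1$; the case $S\cap A=\emptyset$ is handled by noting that all but at most one index $\alpha$ must have $S\cap\{b_\alpha,c_\alpha\}\ne\emptyset$, whence $|S|\ge 2^k-2\ge k+1$. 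Without replacing your containment claim by an argument of this kind, the lower bound in (i) and all of (ii) remain unproved.
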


\begin{proof}
(i) First, we show that $A$ is a metric basis of $G_k$. Clearly, $A$ forms a resolving set of $G_k$; thus, $\dim(G_k) \le |A| =k$. To show $\dim(G_k) \ge k$, suppose $S$ is a metric basis of $G_k$ with $S \neq A$. If $S \cap A=\emptyset$, then, for any distinct $\alpha$ and $\beta$, $S \cap \{b_{\alpha}, b_{\beta}, c_{\alpha}, c_{\beta}\} \neq \emptyset$; otherwise, $\code_S(b_{\alpha})=\code_S(b_{\beta})$ and $\code_S(c_{\alpha})=\code_S(c_{\beta})$; then, $|S|\ge 2^k-2 \ge k+1$ for $k\ge 3$. Thus, $S\cap A \neq \emptyset$. By relabeling the vertices of $A$ if necessary, we can assume that $S \cap A=\cup_{i=1}^{t}\{a_i\}$, where $1 \le t \le k-1$. Then, there are $2^{k-t}$ vertices of $B$ that are not resolved by $S \cap A$, and there are $2^{k-t}$ vertices of $C$ that are not resolved by $S \cap A$; thus, $|S \cap (V(G_k)-A)| \ge 2^{k-t}-1 \ge k-t$, where the last inequality holds since $2^x-x-1 \ge 0$ for $x\ge 1$. So, $\dim(G_k)=|S|=|S\cap A|+|S\cap(V(G_k)-A)| \ge t+(k-t)=k$. Thus, $\dim(G_k)=k$. \\

(ii) Suppose $S\neq A$ is a metric basis of $G_k$; then, we have $S \cap A \neq \emptyset$ from above argument. If $|S \cap A|=t \le k-2$, then $|S \cap (V(G_k)-A)| \ge 2^{k-t}-1 \ge k-t+1$ for $k-t \ge 2$, where the last inequality holds since $2^x-x-2 \ge 0$ for $x\ge 2$. If $|S \cap A|=k-1$, then there exist ($2^{k-1}-1$) pairs in $B$ not resolved by $S\cap A$. So, $|S \cap(V(G_k)-A)| \ge 2^{k-1}-2\geq 2$ for $k\geq 3$, and thus $|S|\geq k+1$. In both cases, we find $|S|>k$, contradicting the assumption of $S$ being a metric basis. \\

(iii) First, note that $B$ and $C$ are resolving sets of $G_k$. To see that $B$ resolves $G_k$, set $B = \{b_1,\ldots, b_{2^k-1}\}$. Notice that $\code_B(c_i)$ has $1$ in the $i$th entry and $2$ in the rest of its entries, while $\code_B(a_j)$ has $1$ in exactly $2^{k-1}$ of its entries and $2$ in the rest of its entries. And $\code_B(a_i) \neq \code_B(a_j)$ for $i\neq j$, since there is $b\in B$ such that $N(b)\cap A=\{a_i\}$. The set $C$ is seen to be a resolving set by a very similar argument.

\medskip

Now, let $R=\cup_{\alpha=1}^{2^k-1}\{x_{\alpha}\}$, where $x_{\alpha} \in \{b_{\alpha}, c_{\alpha}\}$, and assume that $R \neq B$ and $R \neq C$. We show that $R$ resolves any two vertices of $G_k$ by considering memberships of the two vertices with respect to the sets $A$, $B$, and $C$; there are altogether six cases to consider.

\medskip

For distinct vertices $a_i,a_j \in A$, there exists a vertex $b_{\gamma}\in B$ such that $d(b_{\gamma}, a_i)=2=1+d(b_{\gamma}, a_j)$ and $d(c_{\gamma}, a_i)=3=1+d(c_{\gamma}, a_j)$. Since $R \cap \{b_{\gamma}, c_{\gamma}\} \neq \emptyset$, $\code_R(a_i) \neq \code_R(a_j)$.

\medskip

Let distinct vertices $b_{\alpha}, b_{\beta} \in B$ be given. If $R \cap \{b_{\alpha}, b_{\beta}\} \neq \emptyset$, then $\code_R(b_{\alpha}) \neq \code_R(b_{\beta})$. If $R \cap \{b_{\alpha}, b_{\beta}\}= \emptyset$, then $\{c_{\alpha}, c_{\beta}\} \subset R$. Then $d(c_{\alpha}, b_{\beta})=2=1+d(c_{\alpha}, b_{\alpha})$, and thus $\code_R(b_{\alpha})\neq \code_R(b_{\beta})$. The case of distinct vertices $c_{\alpha}, c_{\beta} \in C$ is handled in the same manner.

\medskip

If $a_i \in A$ and $b_{\alpha} \in B$, then $\code_R(a_i) \neq \code_R(b_{\alpha})$ since $R \cap \{b_{\alpha}, c_{\alpha}\} \neq \emptyset$.

\medskip

Let $a_i \in A$ and $c_{\beta} \in C$ be given. There exists a vertex $c_{\gamma}\in R$ such that $d(c_{\gamma}, c_{\beta})\leq 1<2\leq d(c_{\gamma}, a_i)$, and thus $\code_R(a_i) \neq \code_R(c_{\beta})$.

\medskip

Let $b_{\alpha} \in B$ and $c_{\beta} \in C$ be given. If $\alpha=\beta$, then $|R \cap \{b_{\alpha}, c_{\alpha}\}|=1$, and thus $\code_R(b_{\alpha}) \neq \code_R(b_{\beta})$. If $\alpha \neq \beta$ and $R \cap \{b_{\alpha}, c_{\beta}\} \neq \emptyset$, then $\code_R(b_{\alpha}) \neq \code_R(c_{\beta})$. If $\alpha \neq \beta$ and $R \cap \{b_{\alpha}, c_{\beta}\} = \emptyset$, then there exists $\gamma\not\in\{\alpha, \beta\}$ such that $|\{b_{\gamma},c_{\gamma}\}\cap R|=1$, and this yields $\code_R(b_{\alpha})\neq\code_R(c_{\beta})$: taking the case $c_{\gamma}\in R$ for example, we have $d(c_{\gamma}, b_{\alpha})=2=1+d(c_{\gamma}, c_{\beta})$.~\hfill
\end{proof}

From Theorem~\ref{Gk_claims}, we immediately conclude the following

\begin{corollary}
For each $k\geq 3$, we have $o(G_k)=\mathcal{R}$ and $R_{\rm MB}(G_k)>\dim(G_k)$.
\end{corollary}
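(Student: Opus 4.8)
The plan is to derive the corollary directly from the three parts of Theorem~\ref{Gk_claims}, which together hand us everything we need. The statement $o(G_k)=\mathcal{R}$ should follow immediately from part~(iii) combined with Proposition~\ref{prop:pairing-in-hypergraphs}: since $\bigcup_{\emptyset\neq S\subseteq A}\{\{b_S,c_S\}\}$ is a pairing resolving set of $G_k$, the graph admits a pairing resolving set, and so $o(G_k)=\mathcal{R}$ by that proposition. This part requires no real work.

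For the strict inequality $R_{\rm MB}(G_k)>\dim(G_k)$, I would argue that $\mathrm{R}^*$ cannot win the R-game using only $k=\dim(G_k)$ moves, so that $R_{\rm MB}(G_k)\ge k+1$ (and since $o(G_k)=\mathcal{R}$ guarantees $R_{\rm MB}(G_k)$ is finite, this is a genuine strict inequality over $\dim(G_k)=k$ from part~(i)). The key leverage is part~(ii): $A$ is the \emph{unique} metric basis, so the only $k$-element resolving set is $A$ itself. Hence for $\mathrm{R}^*$ to finish in exactly $k$ moves, the $k$ vertices he selects must be precisely $A=\{a_1,\dots,a_k\}$. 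The plan is to exhibit a spoiling move for $\mathrm{S}^*$ that prevents this: in the R-game, after $\mathrm{R}^*$ plays his first vertex, $\mathrm{S}^*$ simply claims some vertex of $A$ not yet taken (possible since $k\ge 3$ means $A$ still contains an unclaimed vertex). From then on $\mathrm{R}^*$ can never occupy all of $A$, so his first $k$ selected vertices cannot equal $A$, and by uniqueness they cannot form any resolving set of size $k$. Therefore $\mathrm{R}^*$ needs at least $k+1$ moves, giving $R_{\rm MB}(G_k)\ge k+1>k=\dim(G_k)$.

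I expect the main obstacle to be stating the spoiling argument cleanly rather than any computational difficulty. The subtlety is that $R_{\rm MB}$ is defined as the minimum number of moves in an \emph{optimal winning strategy}, so I must argue about what happens after exactly $k$ of $\mathrm{R}^*$'s moves regardless of his strategy: I need that no matter how $\mathrm{R}^*$ plays, $\mathrm{S}^*$ can ensure the set of $\mathrm{R}^*$'s first $k$ vertices differs from $A$. Since $\mathrm{S}^*$ moves after $\mathrm{R}^*$'s first move in the R-game, she can grab an element of $A$ before $\mathrm{R}^*$ has taken all of $A$ (he has taken at most one vertex so far, and $|A|=k\ge 3\ge 2$), and that single stolen vertex of $A$ is then permanently unavailable to $\mathrm{R}^*$; consequently $\mathrm{R}^*$'s first $k$ vertices omit at least one element of $A$, so they are neither equal to $A$ nor (by part~(ii)) any other resolving set of cardinality $k$. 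Care is only needed to confirm the stolen vertex remains denied to $\mathrm{R}^*$ for the entire remainder of play, which is immediate from the rule that a vertex once chosen cannot be chosen again.

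This completes the proof, since the winning strategy for $\mathrm{R}^*$ (which exists because $o(G_k)=\mathcal{R}$) must then use strictly more than $k$ moves, while part~(i) gives $\dim(G_k)=k$.
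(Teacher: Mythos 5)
Your proposal is correct and is precisely the argument the paper intends: the paper derives the corollary "immediately" from Theorem~\ref{Gk_claims}, with $o(G_k)=\mathcal{R}$ coming from part~(iii) via Proposition~\ref{prop:pairing-in-hypergraphs}, and the strict inequality coming from part~(ii), since the unique metric basis $A$ can always be spoiled by ${\rm S}^*$ claiming one of its vertices on her first move. Your added care about why the only $k$-element resolving set is $A$ and why the stolen vertex stays denied matches the intended (unwritten) reasoning exactly.
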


\section{Some applications}
\label{sec:applications}

With the help of the results from the previous section, we now determine $o(G)$ for some classes of graphs $G$. We also determine $R_{\rm MB}(G)$ and $R'_{\rm MB}(G)$ when ${\rm R}^*$ has a winning strategy, and we determine $S_{\rm MB}(G)$ and $S'_{\rm MB}(G)$ when ${\rm S}^*$ has a winning strategy.

\subsection*{Trees}

Fix a tree $T$. A {\em support vertex} is a vertex that is adjacent to a vertex of degree one, a \emph{major vertex} is a vertex of degree at least three. A vertex $\ell$ of degree $1$ is called a \emph{terminal vertex} of a major vertex $v$ if $d(\ell, v)<d (\ell, w)$ for every other major vertex $w$ in $T$. The \emph{terminal degree}, $ter(v)$, of a major vertex $v$ is the number of terminal vertices of $v$ in $T$, and an \emph{exterior major vertex} is a major vertex that has positive terminal degree. We denote by $ex(T)$ the number of exterior major vertices of $T$, and $\sigma(T)$ the number of leaves of $T$. Let $M(T)$ be the set of exterior major vertices of $T$. Let $M_1(T)=\{w\in M(T): ter(w)=1\}$ and let $M_2(T)=\{w \in M(T):ter(w) \ge 2\}$; note that $M(T)=M_1(T) \cup M_2(T)$. For each $v \in M(T)$, let $T_v$ be the subtree of $T$ induced by $v$ and all vertices belonging to the paths joining $v$ with its terminal vertices, and let $L_v$ be the set of terminal vertices of $v$ in $T$.

\begin{theorem}\emph{\cite{tree1, tree2, tree3}}\label{dim_tree}
If $T$ is a tree that is not a path, then $\dim(T)=\sigma(T)-ex(T)$.
\end{theorem}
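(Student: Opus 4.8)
The plan is to prove the classical formula $\dim(T)=\sigma(T)-ex(T)$ for a tree $T$ that is not a path, by constructing an explicit resolving set of the claimed size and then arguing that no smaller set can resolve $T$. The structural intuition is that all the ``resolving work'' happens along the pendant paths hanging off exterior major vertices, and that at each exterior major vertex $v$ with terminal degree $ter(v)$ we are forced to spend exactly $ter(v)-1$ vertices, one on all but one of its terminal paths.

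\textbf{Upper bound.} First I would construct a candidate resolving set $W$. For each exterior major vertex $v\in M(T)$, fix an arbitrary terminal vertex of $v$ and place into $W$ the leaves corresponding to \emph{all but one} of the terminal vertices of $v$; concretely, $|W\cap L_v|=ter(v)-1$. Then $|W|=\sum_{v\in M(T)}(ter(v)-1)=\sigma(T)-ex(T)$, using that $\sum_{v\in M(T)}ter(v)=\sigma(T)$ (every leaf is the terminal vertex of a unique exterior major vertex, since $T$ is not a path). I would then verify $W$ resolves $T$: given two distinct vertices $x,y$, I would split into cases according to whether $x$ and $y$ lie on the same pendant path of a common $T_v$ or in different parts of the tree. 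Vertices on different ``branches'' are separated because their distances to the chosen leaf in $W$ differ; two vertices on a common pendant path to a chosen leaf are separated by their differing distances to that leaf; and the delicate case is two vertices on the \emph{unchosen} terminal path of some $v$, which are separated by any $w\in W$ lying on another terminal path of the same $v$, since passing through $v$ makes the distances differ. This case analysis is routine but must be done carefully.

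\textbf{Lower bound.} For the reverse inequality I would argue that any resolving set $S$ must satisfy $|S\cap T_v|\ge ter(v)-1$ for each $v\in M(T)$, and that these contributions are disjoint. The key local fact is that if two terminal paths of the same exterior major vertex $v$ are both missing a vertex of $S$ in their interiors (counting the leaf), then a suitable pair of vertices, one on each path and equidistant from $v$, receives identical distance vectors from $S$: every vertex of $S$ outside these two paths reaches both through $v$, and hence is equidistant from the equidistant pair. This is exactly the twin-type obstruction, and I expect this to be the main obstacle, namely pinning down precisely which pairs become unresolved and confirming that no vertex of $S$ off the two paths can distinguish them. Summing $ter(v)-1$ over all $v\in M(T)$ and using the disjointness of the subtrees $T_v$ gives $|S|\ge \sigma(T)-ex(T)$, completing the proof.

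Since the theorem is quoted from references \cite{tree1, tree2, tree3}, an alternative is simply to cite it; but the self-contained argument above follows the standard approach of Chartrand et al.\ and isolates the forced $ter(v)-1$ vertices per exterior major vertex as the crux.
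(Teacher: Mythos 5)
The paper does not prove this statement; it is quoted verbatim from \cite{tree1, tree2, tree3}, so there is no internal proof to compare against. Your sketch is the standard argument from those references and is sound: the counting identity $\sum_{v\in M(T)}ter(v)=\sigma(T)$ holds because every leaf of a non-path tree has a unique nearest major vertex, and your lower bound is exactly right --- if two terminal paths of the same $v$ avoid $S$ outside $v$, then any $s\in S$ off both paths satisfies $d(s,x)=d(s,v)+d(v,x)$ for $x$ on either path, so equidistant-from-$v$ pairs are unresolved, forcing $|S\cap (T_v-v)|\ge ter(v)-1$ with these sets pairwise disjoint (internal vertices of terminal paths have degree $2$, so no $T_v$ contains another exterior major vertex). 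The only part you leave genuinely unverified is that the constructed $W$ resolves $T$; your three cases are the right ones but the ``different branches'' case needs the projection argument (a vertex $z$ fails to separate $x$ from $y$ exactly when $z$ projects onto the midpoint of the $x$--$y$ path, and one checks $W$ cannot lie entirely in that set), which is the routine-but-careful step you flag. Note also that your construction is consistent with the basis characterization the paper quotes as Theorem~\ref{zhang_tree}.
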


\begin{theorem}\emph{\cite{tree3}}\label{zhang_tree}
Let $T$ be a tree with $ex(T)=k \ge 1$, and let $v_1, \ldots, v_k$ be the exterior major vertices of $T$. For each $i \in [k]$, let $\ell_{i,1}, \ldots, \ell_{i, \sigma_i}$ be the terminal vertices of $v_i$ with $ter(v_i)=\sigma_i \ge 1$, and let $P_{i,j}$ be the $v_i-\ell_{i,j}$ path, where $j \in  [\sigma_i]$. Let $W \subseteq V(T)$. Then $W$ is a metric basis of $T$ if and only if $W$ contains exactly one vertex from each of the paths $P_{i,j}-v_i$, where $j \in [\sigma_i]$ and $i\in [k]$, with exactly one exception for each $i\in [k]$ and $W$ contains no other vertices of $T$.
\end{theorem}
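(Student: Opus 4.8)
The plan is to prove both directions of the ``if and only if'' by reducing them to a single structural fact about resolving sets of trees, which I will call the \emph{leg condition}: a set $W\subseteq V(T)$ is a resolving set of $T$ if and only if, for every exterior major vertex $v_i$, at most one of the legs $P_{i,1},\ldots,P_{i,\sigma_i}$ contains no vertex of $W$. Before proving the leg condition I would record two bookkeeping facts. First, since $T$ is not a path, every leaf is a terminal vertex of exactly one exterior major vertex, so $\sum_{i=1}^{k}\sigma_i=\sigma(T)$; hence, by Theorem~\ref{dim_tree}, $\dim(T)=\sigma(T)-k=\sum_{i=1}^{k}(\sigma_i-1)$. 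Second, the open legs $P_{i,j}-v_i$ are pairwise vertex-disjoint, since no path from an exterior major vertex to one of its terminal vertices passes through a second major vertex; this lets me count landmarks leg by leg.

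For the \emph{necessity} direction of the leg condition, suppose two legs $P_{i,j}$ and $P_{i,j'}$ of a common $v_i$ both avoid $W$. The neighbours of $v_i$ on these two legs are distinct vertices at distance one from $v_i$ on different branches, so every vertex outside $P_{i,j}\cup P_{i,j'}$ is equidistant from them, and $W$ meets neither leg; hence no vertex of $W$ separates them and $W$ is not resolving. Consequently any resolving set meets at least $\sigma_i-1$ legs of each $v_i$, and disjointness of the open legs gives $|W|\ge\sum_{i}(\sigma_i-1)=\dim(T)$. If $W$ is a metric basis, then equality holds throughout: for each $i$ exactly $\sigma_i-1$ legs meet $W$, each in a single vertex, exactly one leg is the exception, and $W$ has no vertex outside the open legs $P_{i,j}-v_i$. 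This is precisely the asserted form, which settles the ``only if'' direction of the theorem.

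The \emph{sufficiency} direction of the leg condition --- that any $W$ with at most one $W$-free leg per $v_i$ resolves $T$ --- is the heart of the argument and the step I expect to be the main obstacle, because the landmark on each non-exception leg may be placed at an arbitrary vertex of that leg. I would use the midpoint criterion for trees: distinct $x,y$ are \emph{un}resolved by $W$ only if the $x,y$-path has even length with midpoint $m$ and $W$ misses both components $C_x,C_y$ of $T-m$ containing $x$ and $y$ (a landmark lying in $C_x$ or $C_y$ always has projection onto the $x,y$-path different from $m$, hence separates them). Assuming such a bad pair existed, I would pick a leaf $\ell$ of $T$ inside $C_x$ by walking away from $m$; then $\ell$ is a terminal vertex of some $v_i$, and tracing its leg toward $v_i$ locates a non-exception leg of some exterior major vertex lying entirely in $C_x\cup C_y$, whose landmark then belongs to $C_x\cup C_y$, contradicting $W\cap(C_x\cup C_y)=\emptyset$. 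The delicate bookkeeping arises when $m$ itself lies on a leg of $v_i$ or when $\sigma_i\in\{1,2\}$; here I would use that $k=1$ forces $\sigma_1\ge3$ (a lone exterior major vertex has all branches ending in terminal vertices) and that $\sigma_i=1$ forces a second exterior major vertex whose landmark sits in the large component.

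Finally I would assemble the theorem. The ``only if'' direction is exactly the equality analysis of the second paragraph. For the ``if'' direction, a set $W$ of the stated form has $|W|=\sum_{i}(\sigma_i-1)=\dim(T)$ by the first paragraph and satisfies the leg condition (each $v_i$ has precisely one $W$-free leg), so by the sufficiency of the leg condition it is a resolving set, and being of cardinality $\dim(T)$ it is a metric basis. Thus the sets described are exactly the metric bases of $T$.
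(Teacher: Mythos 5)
This statement is quoted in the paper from \cite{tree3} and is not proved there, so there is no in-paper argument to compare against; I can only assess your proposal on its own terms. Your reduction to the ``leg condition,'' the disjointness of the open legs, the counting that turns the leg condition into the classification of metric bases, and the midpoint criterion for trees are all correct and are the standard route to this result.

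The genuine gap is in the sufficiency of the leg condition, precisely the step you flag as the heart of the argument. Your plan is to find ``a non-exception leg of some exterior major vertex lying entirely in $C_x\cup C_y$,'' but any leg lying entirely in $C_x\cup C_y$ is automatically $W$-free, so by itself it is merely a candidate for the one permitted exception at its exterior major vertex; nothing forces it to be non-exception. Indeed, one $W$-free leg inside $C_x$ and another inside $C_y$ belonging to \emph{different} exterior major vertices is fully consistent with the leg condition, so walking from a leaf of $C_x$ to its exterior major vertex cannot, on its own, produce a contradiction. What is actually needed is to exhibit \emph{two} $W$-free legs at the \emph{same} exterior major vertex. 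This can be done by analyzing the branch structure: if $C_x$ (or $C_y$) is not a path, a major vertex of $C_x$ at maximum distance from $m$ has at least two branches pointing away from $m$ that contain no further major vertices, hence is an exterior major vertex with at least two legs contained in $C_x$, both $W$-free; if both $C_x$ and $C_y$ are paths, then (since $T$ is not a path) $m$ itself is an exterior major vertex and $C_x$, $C_y$ are two of its open legs, again both $W$-free. One also needs the degenerate case in which $m$ is an interior vertex of a leg, where $\deg(m)=2$ forces $W\subseteq\{m\}$, contradicting $|W|\ge\sum_i(\sigma_i-1)=\dim(T)\ge 2$. None of these mechanisms appears in your sketch, and without them the contradiction you assert does not follow; with them, your outline closes correctly.
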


\begin{theorem}
If $T$ is a tree that is not a path, then,
\begin{equation*}
o(T)=\left\{
\begin{array}{ll}
\mathcal{S}; & |N(v) \cap L_v| \ge 4 \mbox{ for some } v \in M_2(T),\\
{} &  \mbox{or } |N(u) \cap L_u|=3=|N(w) \cap L_w|  \mbox{ for distinct } u,w \in M_2(T),\\
\mathcal{N}; & |N(w) \cap L_w|=3 \mbox{ for exactly one } w\in M_2(T)\\
{} & \mbox{and } |N(v) \cap L_v| \le 2 \mbox{ for each } v \in M_2(T)-\{w\},\\
\mathcal{R}; & |N(v) \cap L_v| \le 2 \mbox{ for each } v \in M_2(T).
\end{array}\right.
\end{equation*}
Moreover, if $o(T)=\mathcal{S}$, then $S_{\rm MB}(T)=S'_{\rm MB}(T)=2$; if $o(T)=\mathcal{R}$, then $R_{\rm MB}(T)=R'_{\rm MB}(T)=\dim(T)=\sigma(T)-ex(T)$.
\end{theorem}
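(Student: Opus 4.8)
The plan is to translate the three regimes into statements about twin equivalence classes and pairing resolving sets, and then to invoke Proposition~\ref{prop_twin}, Observation~\ref{obs_twin}, and Corollary~\ref{cor:pairing_resolving}. The starting point is the remark that for a vertex $v$ the set $N(v)\cap L_v$ of leaves adjacent to $v$ is a twin equivalence class, and that in a tree which is not a path these are exactly the twin equivalence classes of cardinality at least $2$: two vertices of degree at least $2$ cannot be twins without creating a cycle, and two leaves are twins if and only if they share their unique neighbor. Thus $T$ has a twin class of cardinality $|N(v)\cap L_v|$ for each $v\in M_2(T)$, and no larger ones. The case $o(T)=\mathcal{S}$ is then immediate: if some $|N(v)\cap L_v|\ge 4$, apply Proposition~\ref{prop_twin}(a); if $|N(u)\cap L_u|=3=|N(w)\cap L_w|$ for distinct $u,w\in M_2(T)$, apply Proposition~\ref{prop_twin}(b). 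In both subcases we obtain $o(T)=\mathcal{S}$ and $S_{\rm MB}(T)=S'_{\rm MB}(T)=2$.

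For the case $o(T)=\mathcal{R}$, I would exhibit a dim-pairing resolving set and quote Corollary~\ref{cor:pairing_resolving}. I first record the resolving criterion for trees that follows from Theorem~\ref{zhang_tree}: a set $W$ resolves $T$ if and only if, for every exterior major vertex $v_i$, $W$ meets all but at most one of the legs $P_{i,j}-v_i$. Necessity holds because if two legs of $v_i$ are missed, the neighbors of $v_i$ on those legs are unresolved; sufficiency holds because such a $W$ contains a set of the form described in Theorem~\ref{zhang_tree}. The pairing is built leg by leg at each $v_i$. The hypothesis $|N(v_i)\cap L_{v_i}|\le 2$ says that at most two legs of $v_i$ have length $1$; every other leg has length at least $2$ and hence contains two distinct vertices of $P_{i,j}-v_i$, which I pair together to force that leg to be hit by any selection. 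If $v_i$ has at most one leaf-leg, I declare one leg (a leaf-leg if present) to be the exception and internally pair each of the remaining $\sigma_i-1$ (necessarily long) legs; if $v_i$ has exactly two leaf-legs $\ell,\ell'$, I pair $\{\ell,\ell'\}$ and internally pair each of the remaining $\sigma_i-2$ long legs. In either case one obtains $\sigma_i-1$ pairs, every selection covers all legs of $v_i$ except one, and summing over $i$ yields $\sum_i(\sigma_i-1)=\sigma(T)-ex(T)=\dim(T)$ pairwise-disjoint pairs (the legs $P_{i,j}-v_i$ are mutually disjoint). By the resolving criterion every selection is a resolving set, so this is a dim-pairing resolving set, and Corollary~\ref{cor:pairing_resolving} together with Theorem~\ref{dim_tree} gives $o(T)=\mathcal{R}$ and $R_{\rm MB}(T)=R'_{\rm MB}(T)=\dim(T)=\sigma(T)-ex(T)$.

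For the case $o(T)=\mathcal{N}$, the unique vertex $w$ with $|N(w)\cap L_w|=3$ provides a twin triple $\{\ell_1,\ell_2,\ell_3\}$, which acts as a first-player gadget: by Observation~\ref{obs_twin} any resolving set must contain at least two of these leaves. I would let $A'$ be the pairing of the $\mathcal{R}$-case construction applied to every exterior major vertex other than at the three leaf-legs of $w$ (all such vertices satisfy $|N(v)\cap L_v|\le 2$, and the long legs of $w$ are internally paired). In the R-game, Resolver plays $\ell_1$ first and then follows, as the responding player, the pairing strategy for $A'\cup\{\{\ell_2,\ell_3\}\}$; he thereby secures at least one of $\ell_2,\ell_3$ (hence two leaves of the triple) and at least one vertex of every pair, so his vertices cover all but one leg of each exterior major vertex and form a resolving set. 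In the S-game, Spoiler plays $\ell_1$ first and uses $\{\ell_2,\ell_3\}$ as her own pair, securing two leaves of the triple; then Resolver holds at most one, so by Observation~\ref{obs_twin} he can never complete a resolving set. Hence the first player wins both games and $o(T)=\mathcal{N}$.

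The main obstacle is the pairing construction in the $\mathcal{R}$-case (and its reuse in the $\mathcal{N}$-case): one must check that the hypothesis $|N(v)\cap L_v|\le 2$ is \emph{exactly} what allows every exterior major vertex to keep a single leg as the unavoidable ``exception'' while forcing all of its other legs to be hit. A short leg cannot be forced by an internal pair, so two short legs must be handled together as one pair, and three short legs (the $\mathcal{N}$/$\mathcal{S}$ threshold) would make this impossible. A secondary point needing care is the clean statement and proof of the tree resolving criterion used throughout, and the verification that the first move plus the pairing responses interleave legally in the $\mathcal{N}$-case, in particular that the triple leaf requested as a response is always still available.
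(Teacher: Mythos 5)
Your proposal is correct and follows essentially the same route as the paper: Proposition~\ref{prop_twin} applied to the twin classes $N(v)\cap L_v$ for the $\mathcal{S}$ cases, a leg-by-leg dim-pairing resolving set (short legs paired with each other, long legs paired internally with their support vertex) combined with Theorem~\ref{zhang_tree} and Corollary~\ref{cor:pairing_resolving} for the $\mathcal{R}$ case, and a first-player race for two of the three twin leaves plus the residual pairing for the $\mathcal{N}$ case. The only differences are cosmetic (you fix an exception leg when a vertex has at most one leaf-leg, whereas the paper always pairs the two closest terminal vertices, and you state the tree resolving criterion explicitly where the paper uses it implicitly), and your identified ``obstacles'' are exactly the points the paper's proof also has to handle.
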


\begin{proof}
Let $T$ be a tree that is not a path. Hence $ex(T) \ge 1$.

First, suppose that there exists an exterior major vertex $x \in M_2(T)$ such that $|N(x) \cap L_x| \ge 4$. Since $N(x) \cap L_x$ is a twin equivalence class of cardinality at least 4, by Proposition~\ref{prop_twin}(a), $o(T)=\mathcal{S}$ and $S_{\rm MB}(T)=S'_{\rm MB}(T)=2$.

Second, suppose that $|N(v) \cap L_v| \le 3$ for each $v \in M_2(T)$. If there exist distinct $x,y \in M_2(T)$ such that $|N(x) \cap L_x|=|N(y) \cap L_y|=3$, then  $N(x) \cap L_x$ and $N(y) \cap L_y$ are distinct twin equivalence classes of cardinality 3; thus, by Proposition~\ref{prop_twin}(b), $o(T)=\mathcal{S}$ and $S_{\rm MB}(T)=S'_{\rm MB}(T)=2$.

Now, suppose there exists exactly one $z\in M_2(T)$ with $|N(z) \cap L_z|=3$, and $|N(v) \cap L_v| \le 2$ for each $v\in M_2(T)-\{z\}$. Let $L_z=\{\ell'_1, \ldots, \ell'_a\}$, where $a \ge 3$ and $d(z, \ell'_i)=1$ for $i\in [3]$; if $a \ge 4$, let $s'_j$ be the support vertex that lies on the $z-\ell'_j$ path for each $j \in [a]-[3]$. Note that, for any resolving set $W$ of $T$, Observation~\ref{obs_twin} yields $|W \cap \{\ell'_1, \ell'_2, \ell'_3\}| \ge 2$. If there exists a vertex $v\in M_2(T)-\{z\}$ with $|N(v) \cap L_v| \le 2$, then, for a fixed $w\in M_2(T)-\{z\}$, let $L_w=\{\ell_1, \ldots, \ell_{b}\}$ such that $d(w, \ell_1) \le \cdots \le d(w, \ell_b)$; if $b\ge3$, let $s_i$ be the support vertex that lies on the $w-\ell_i$ path for each $i\in [b]-[2]$. Then $R_w=\{\{\ell_1, \ell_2\}\} \cup (\cup_{i=3}^{b}\{\{s_i, \ell_i\}\})$ is a dim-pairing resolving set of $T_w$. In the S-game, ${\rm S}^*$ can occupy two vertices of $\{\ell'_1, \ell'_2, \ell'_3\}$ after her second move; thus ${\rm R}^*$ fails to occupy vertices that form a resolving set of $G$, and hence ${\rm S}^*$ wins. In the R-game, ${\rm R}^*$ can occupy two vertices of $\{\ell'_1, \ell'_2, \ell'_3\}$ after his second move, and occupy exactly one vertex of each pair in $(\cup_{i=4}^{a}\{\{s'_i, \ell'_i\}\})\cup(\cup_{w\in M_2(T)-\{z\}} R_w)$ thereafter until he completes his $\dim(T)^{\rm th}$ move; thus, the set of vertices selected by ${\rm R}^*$, after his $\dim(T)^{\rm th}$ move, forms a resolving set of $T$, and hence ${\rm R}^*$ wins. Therefore, $o(G)=\mathcal{N}$.

Third, suppose that $|N(v) \cap L_v| \le 2$ for each $v \in M_2(T)$. For a fixed $v \in M_2(T)$ with $ter(v)=k \ge 2$, let $\ell_1, \ldots, \ell_k$ be the terminal vertices of $v$ such that $d(v, \ell_1) \le \cdots \le d(v, \ell_k)$; if $d(v, \ell_i) \ge 2$, let $s_i$ be the support vertex that lies on the $v-\ell_i$ path. Then $W_v=\{\{\ell_1, \ell_2\}\} \cup (\cup_{i=3}^{k}\{\{s_i, \ell_i\}\})$ is a dim-pairing resolving set of $T_v$, and $\cup_{v\in M_2(T)} W_v$ is a dim-pairing resolving set of $T$. So, $o(T)=\mathcal{R}$ by Proposition~\ref{prop:pairing-in-hypergraphs}, and $R_{\rm MB}(T)=R'_{\rm MB}(T)=\dim(T)$ by Corollary~\ref{cor:pairing_resolving}.
\end{proof}

\subsection*{The Petersen graph}

For the Petersen graph  $\mathcal{P}$ (see Fig.~\ref{fig_petersen}) we first recall the following results.

\begin{theorem}\emph{\cite{petersen}}\label{dim_petersen}
For the Petersen graph $\mathcal{P}$, $\dim(\mathcal{P})=3$.
\end{theorem}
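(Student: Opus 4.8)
The plan is to establish the two inequalities $\dim(\mathcal{P})\ge 3$ and $\dim(\mathcal{P})\le 3$ separately, exploiting the structural fact that $\mathcal{P}$ is a $3$-regular graph of diameter $2$, so that $d(x,y)\in\{0,1,2\}$ for all vertices $x,y$. This makes distance vectors essentially combinatorial: for a vertex $v$ outside a set $W$, the code $\code_W(v)$ records only which vertices of $W$ are neighbors of $v$ (distance $1$) and which are not (distance $2$).

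For the lower bound I would use a pigeonhole argument. Suppose $W\subseteq V(\mathcal{P})$ were a resolving set with $|W|\le 2$. Every vertex $v\notin W$ satisfies $d(v,w)\in\{1,2\}$ for each $w\in W$, since distance $0$ is excluded as $v\neq w$ and distance exceeding $2$ is impossible in a diameter-$2$ graph. Hence $\code_W(v)\in\{1,2\}^{|W|}$, a set of at most $2^2=4$ vectors, while there are $n(\mathcal{P})-|W|\ge 8$ vertices outside $W$. As $8>4$, two distinct vertices receive the same code, contradicting that $W$ resolves $\mathcal{P}$. Therefore $\dim(\mathcal{P})\ge 3$.

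For the upper bound I would exhibit one explicit resolving set of size $3$. Writing the outer $5$-cycle as $o_0o_1o_2o_3o_4$, the inner vertices as $i_0,\ldots,i_4$ with $i_j\sim i_{j\pm 2}$, and the spokes $o_j\sim i_j$ (indices mod $5$), take $W=\{o_0,o_2,i_4\}$, which is an independent set. Because the diameter is $2$, the code of each $v\notin W$ is determined by the subset $N(v)\cap W$, so it suffices to check that the seven subsets $N(v)\cap W$, over $v\notin W$, are pairwise distinct; a direct inspection confirms that they realize all vectors of $\{1,2\}^3$ except $(1,1,1)$, while the three vertices of $W$ have codes containing a $0$ and so differ from all of these and from one another. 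Hence all ten codes are distinct, $W$ resolves $\mathcal{P}$, and $\dim(\mathcal{P})\le 3$.

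The lower bound is immediate from the diameter, so the only genuine obstacle is the upper bound, namely choosing the triple correctly. Since $\mathcal{P}$ has girth $5$ and most vertices lie at distance $2$ from a given small set, several natural choices fail: three consecutive outer vertices, or two adjacent outer vertices together with an inner vertex, each produce two vertices sharing the all-$2$ code $(2,2,2)$. Spreading $W$ out into an independent set that meets both the outer rim and a spoke, as above, is what avoids these collisions; the high symmetry (vertex- and arc-transitivity) of $\mathcal{P}$ explains why essentially one well-placed configuration works and keeps the verification to a single short case check.
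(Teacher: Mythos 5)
Your proposal is correct. Note that the paper does not prove this statement at all --- it is quoted from the literature (the reference attached to the theorem), so there is no in-paper argument to compare against; your write-up is a valid self-contained substitute. Both halves check out: since $\mathcal{P}$ has $10$ vertices and diameter $2$, any $W$ with $|W|\le 2$ forces the at least $8$ vertices outside $W$ into at most $2^{|W|}\le 4$ possible codes over $\{1,2\}^{|W|}$, so $\dim(\mathcal{P})\ge 3$; and for $W=\{o_0,o_2,i_4\}$ a direct computation gives the seven codes $(1,1,2)$, $(2,1,2)$, $(1,2,1)$, $(1,2,2)$, $(2,2,1)$, $(2,1,1)$, $(2,2,2)$ for $o_1,o_3,o_4,i_0,i_1,i_2,i_3$ respectively, which are pairwise distinct and distinct from the codes of the vertices of $W$ (each of which contains a $0$). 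One could shorten the upper bound slightly by invoking vertex-transitivity, or cross-check it against Lemma~\ref{petersen_characterization} in the paper (your $W$ is, up to the automorphism group, one of the six bases listed there), but your verification as written is complete.
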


\begin{porism}\emph{\cite{cdim}}\label{porism}
If $W$ is a metric basis of the Petersen graph $\mathcal{P}$, then the subgraph of $\mathcal{P}$ induced by $W$ is an edge-less graph.
\end{porism}

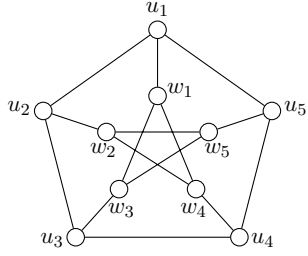
\begin{figure}[ht]
\centering
\begin{tikzpicture}[scale=.8, transform shape]

\node [draw, shape=circle, scale=.8] (1) at  (0,2) {};
\node [draw, shape=circle, scale=.8] (2) at  (-1.9, 0.65) {};
\node [draw, shape=circle, scale=.8] (3) at  (-1.36, -1.45) {};
\node [draw, shape=circle, scale=.8] (4) at  (1.36, -1.45) {};
\node [draw, shape=circle, scale=.8] (5) at  (1.9, 0.65) {};
\node [draw, shape=circle, scale=.8] (11) at  (0,0.9) {};
\node [draw, shape=circle, scale=.8] (22) at  (-0.85, 0.3) {};
\node [draw, shape=circle, scale=.8] (33) at  (-0.64, -0.65) {};
\node [draw, shape=circle, scale=.8] (44) at  (0.64, -0.65) {};
\node [draw, shape=circle, scale=.8] (55) at  (0.85, 0.3) {};

\node [scale=1] at (0,2.35) {$u_1$};
\node [scale=1] at (-2.3,0.7) {$u_2$};
\node [scale=1] at (-1.75,-1.5) {$u_3$};
\node [scale=1] at (1.75,-1.5) {$u_4$};
\node [scale=1] at (2.3,0.7) {$u_5$};
\node [scale=1] at (0.38,1) {$w_1$};
\node [scale=1] at (-0.9,0) {$w_2$};
\node [scale=1] at (-0.6,-1) {$w_3$};
\node [scale=1] at (0.6,-1) {$w_4$};
\node [scale=1] at (1,0) {$w_5$};

\draw(1)--(2)--(3)--(4)--(5)--(1);\draw(11)--(33)--(55)--(22)--(44)--(11);\draw(1)--(11);\draw(2)--(22);\draw(3)--(33); \draw(4)--(44);\draw(5)--(55);

\end{tikzpicture}
\caption{The Petersen graph.}\label{fig_petersen}
\end{figure}

\begin{lemma}\label{petersen_characterization}
Let the vertices of the Petersen graph $\mathcal{P}$ be labeled as in Fig.~\ref{fig_petersen}. Let $W_1=\{u_1,w_2,w_3\}$, $W_2=\{u_1,u_4,w_2\}$, $W_3=\{u_1,w_4,w_5\}$, $W_4=\{u_1,u_3,w_5\}$, $W_5=\{u_1,u_4,w_3\}$ and $W_6=\{u_1,u_3,w_4\}$. Then $W$ is a metric basis of $\mathcal{P}$ with $u_1\in W$ if and only if $W=W_i$ for some $i\in[6]$.
\end{lemma}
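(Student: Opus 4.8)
The plan is to combine the two facts recalled just above---that $\dim(\mathcal{P})=3$ (Theorem~\ref{dim_petersen}) and that every metric basis of $\mathcal{P}$ induces an edge-less subgraph (Porism~\ref{porism})---to shrink the search for metric bases containing $u_1$ to a short explicit list, and then to finish by direct inspection of distance vectors. Any metric basis $W$ has $|W|=3$ and is an independent set; hence if $u_1\in W$, the remaining two vertices lie in $V(\mathcal{P})\setminus(\{u_1\}\cup N(u_1))=\{u_3,u_4,w_2,w_3,w_4,w_5\}$ (which, since $\mathcal{P}$ has diameter $2$, is exactly the set of vertices at distance $2$ from $u_1$) and they form an independent pair there. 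This already confines $W$ to finitely many possibilities, and the proof becomes a matter of deciding which of them resolve $\mathcal{P}$.

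First I would record the adjacencies and observe that $S:=V(\mathcal{P})\setminus(\{u_1\}\cup N(u_1))$ induces a $6$-cycle, namely $u_3\,u_4\,w_4\,w_2\,w_5\,w_3\,u_3$. In a $6$-cycle the independent pairs are precisely the pairs at cyclic distance $2$ (six of them) and the antipodal pairs (three of them), giving nine candidate sets $W=\{u_1\}\cup\{a,b\}$. Translating back through the labeling, the six distance-$2$ pairs yield exactly $W_1,\dots,W_6$, while the three antipodal pairs yield $\{u_1,u_3,w_2\}$, $\{u_1,u_4,w_5\}$, and $\{u_1,w_3,w_4\}$. Thus the lemma is equivalent to the clean statement that, among the nine candidates, precisely the six arising from \emph{non-antipodal} pairs resolve $\mathcal{P}$.

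To finish, for the backward direction I would compute $\code_W(v)$ for all ten vertices $v$ (each coordinate lying in $\{0,1,2\}$) and check that each $W_i$ separates them; for the forward direction I would exhibit, for each antipodal candidate, a colliding pair (for instance $\code(u_5)=\code(w_1)=(1,2,2)$ for $\{u_1,u_3,w_2\}$), so these three fail. Both tasks collapse under symmetry. The reflection fixing $u_1$ and $w_1$ (swapping $u_2\!\leftrightarrow\!u_5$, $u_3\!\leftrightarrow\!u_4$, $w_2\!\leftrightarrow\!w_5$, $w_3\!\leftrightarrow\!w_4$) interchanges $W_1\!\leftrightarrow\!W_3$, $W_2\!\leftrightarrow\!W_4$, $W_5\!\leftrightarrow\!W_6$ and permutes the three bad candidates, so it suffices to treat one representative of each class. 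In fact the stabilizer of $u_1$ acts as the full dihedral group on the $6$-cycle $S$, and the rotations of $S$ act transitively on its distance-$2$ pairs and on its antipodal pairs; since automorphisms preserve the resolving property, in principle a single resolving check and a single non-resolving check suffice.

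The only real obstacle is bookkeeping: one must compute the distances in $\mathcal{P}$ correctly (every distinct non-adjacent pair is at distance exactly $2$) and confirm that the induced structure on $S$ is genuinely the claimed $6$-cycle, since the entire reduction rests on identifying the six listed sets with the non-antipodal pairs and the three excluded candidates with the antipodal pairs. Once this $C_6$ description is in hand, the verification is entirely routine.
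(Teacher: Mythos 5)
Your proposal is correct and follows essentially the same route as the paper: both combine $\dim(\mathcal{P})=3$ with Porism~\ref{porism} to force $W=\{u_1\}\cup\{a,b\}$ where $\{a,b\}$ is an independent pair among the six non-neighbours of $u_1$, and then finish by inspecting distance vectors. Your observation that those six vertices induce a $C_6$ (so the six listed sets are exactly the non-antipodal independent pairs and the three failing candidates the antipodal ones), together with the symmetry reduction via the stabilizer of $u_1$, is a cleaner packaging of the paper's hand-run case analysis rather than a different argument.
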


\begin{proof}

($\Leftarrow$) Let $W=W_1=\{u_1,w_2,w_3\}$. Then $\code_W(u_2)=(1,1,2)$, $\code_W(u_3)=(2,2,1)$, $\code_W(u_4)=(2,2,2)$, $\code_W(u_5)=(1,2,2)$, $\code_W(w_1)=(1,2,1)$, $\code_W(w_4)=(2,1,2)$ and $\code_W(w_5)=(2,1,1)$. So, $W_1$ is a metric basis of $\mathcal{P}$ by Theorem~\ref{dim_petersen}. For $i\in[6]-[1]$, one can easily check that $W_i$ is a metric basis of $\mathcal{P}$.

($\Rightarrow$) Let $W$ be a metric basis of $\mathcal{P}$ with $u_1\in W$. By Porism~\ref{porism}, $W \cap \{u_2,w_1,u_5\}=\emptyset$; thus, $W \cap \{w_2, w_5\} \neq \emptyset$ or $W \cap \{w_3, w_4\} \neq \emptyset$ or $W \cap \{u_3, u_4\} \neq \emptyset$.

First, let $W \cap \{w_2, w_5\} \neq \emptyset$. If $w_2\in W$, say $X_1=\{u_1,w_2\} \subset W$, then $W \cap \{u_2, u_5, w_1, w_4, w_5\}=\emptyset$ by Porism~\ref{porism} and $\code_{X_1}(u_3)=\code_{X_1}(u_4)=\code_{X_1}(w_3)=(2,2)$. Since $d(u_3,w_3)=d(u_3,u_4)$, either $w_3 \in W$ (i.e, $W=W_1$) or $u_4\in W$ (i.e., $W=W_2$). Similarly, if $w_5 \in W$, then $w_4\in W$ (i.e., $W=W_3$) or $u_3\in W$ (i.e., $W=W_4$).

Second, let $W \cap \{w_3, w_4\} \neq \emptyset$. If $w_3 \in W$, say $X_2=\{u_1, w_3\} \subset W$, then $W \cap \{u_2, u_3, u_5, w_1, w_5\}=\emptyset$ by Porism~\ref{porism} and $\code_{X_2}(u_4)=\code_{X_2}(w_2)=\code_{X_2}(w_4)=(2,2)$. Since $d(w_4,w_2)=d(w_4, u_4)$, either $w_2 \in W$ (i.e, $W=W_1$) or $u_4\in W$ (i.e., $W=W_5$). Similarly, if $w_4 \in W$, then $w_5\in W$ (i.e., $W=W_3$) or $u_3\in W$ (i.e., $W=W_6$).

Third, let $W \cap \{u_3, u_4\} \neq \emptyset$. If $u_3 \in W$, say $X_3=\{u_1, u_3\} \subset W$, then $W \cap \{u_2, u_4, u_5, w_1, w_3\}=\emptyset$ by Porism~\ref{porism} and $\code_{X_3}(w_2)=\code_{X_3}(w_4)=\code_{X_3}(w_5)=(2,2)$. Since $d(w_2,w_4)=d(w_2,w_5)$, either $w_4 \in W$ (i.e, $W=W_6$) or $w_5\in W$ (i.e., $W=W_4$). Similarly, if $u_4 \in W$, then $w_2\in W$ (i.e., $W=W_2$) or $w_3\in W$ (i.e., $W=W_5$).
\end{proof}

\begin{theorem}
$o(\mathcal{P})=\mathcal{R}$ and $R_{\rm MB}(\mathcal{P})=R'_{\rm MB}(\mathcal{P})=3$.
\end{theorem}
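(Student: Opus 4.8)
The plan is to show that ${\rm R}^*$ can occupy a metric basis within three moves in both the R-game and the S-game; this proves everything at once. Indeed, $\dim(\mathcal{P})=3$ by Theorem~\ref{dim_petersen}, and once we know $o(\mathcal{P})=\mathcal{R}$, Proposition~\ref{prop:relations}(i) gives the matching lower bound $R'_{\rm MB}(\mathcal{P})\ge R_{\rm MB}(\mathcal{P})\ge 3$. So the entire content reduces to exhibiting a three-move winning strategy for ${\rm R}^*$ in each game.

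The engine is Lemma~\ref{petersen_characterization} together with vertex-transitivity of $\mathcal{P}$. Fix a \emph{center} $v$ that ${\rm R}^*$ intends to occupy; by vertex-transitivity it suffices to analyze $v=u_1$. By Lemma~\ref{petersen_characterization}, the metric bases through $u_1$ are exactly $\{u_1\}\cup e$, where $e$ runs over the six \emph{good pairs} $\{w_2,w_3\},\{w_2,u_4\},\{w_3,u_4\}$ and $\{w_4,w_5\},\{w_4,u_3\},\{w_5,u_3\}$. The structural fact I would extract is that these good pairs are precisely the edges of two vertex-disjoint triangles $T_1=\{w_2,w_3,u_4\}$ and $T_2=\{w_4,w_5,u_3\}$, whose six vertices are exactly the non-neighbors of $u_1$, while the three neighbors $N(u_1)=\{u_2,u_5,w_1\}$ lie in no metric basis through $u_1$ and are thus \emph{useless}. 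By vertex-transitivity the same holds for every center $v$: its six non-neighbors split into two disjoint good-pair triangles and its three neighbors are useless. Hence, once ${\rm R}^*$ holds a center $v$, completing a metric basis is exactly the Maker--Breaker task of claiming two vertices of one of the two triangles.

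I would then solve this reduced game in isolation: on two disjoint triangles, with ${\rm S}^*$ to move first and each player allotted two moves, ${\rm R}^*$ can always claim two vertices of a common triangle. The check is short. If ${\rm S}^*$ first touches a triangle, ${\rm R}^*$ opens the other, still-intact triangle and finishes there, since a single subsequent block cannot cover both remaining vertices; if ${\rm S}^*$ first plays a useless vertex, ${\rm R}^*$ opens a triangle and again finishes. Now to assemble the two games, note the turn order lands favorably in both. In the R-game ${\rm R}^*$ opens with $v=u_1$, after which ${\rm S}^*$ moves first into the reduced game on the intact triangles $T_1,T_2$, so the above yields a metric basis on ${\rm R}^*$'s third move and $R_{\rm MB}(\mathcal{P})\le 3$. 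In the S-game ${\rm S}^*$ opens with some $x$; ${\rm R}^*$ replies by occupying any neighbor $v\in N(x)$, which makes $x$ useless for the center $v$, so all six vertices of $T_1(v),T_2(v)$ remain free, and once more ${\rm S}^*$ is to move next with two moves each remaining. The reduced game is a win, giving $R'_{\rm MB}(\mathcal{P})\le 3$. Thus ${\rm R}^*$ wins both games, $o(\mathcal{P})=\mathcal{R}$, and with the lower bound we conclude $R_{\rm MB}(\mathcal{P})=R'_{\rm MB}(\mathcal{P})=3$.

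The main obstacle is the S-game, where ${\rm S}^*$'s first move grants a one-move head start; the decisive trick that neutralizes it is to answer $x$ by a \emph{neighbor} of $x$, so that ${\rm S}^*$'s opening lands on a vertex that is useless relative to ${\rm R}^*$'s chosen center and the two triangles survive untouched. The secondary point requiring care is the structural claim that the good pairs form two disjoint triangles and that uselessness coincides with adjacency to the center, but this reads off directly from Lemma~\ref{petersen_characterization} and vertex-transitivity.
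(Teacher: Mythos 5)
Your proof is correct and follows essentially the same route as the paper: both rest on Lemma~\ref{petersen_characterization} plus vertex-transitivity to fix a center on ${\rm R}^*$'s first move, both neutralize ${\rm S}^*$'s head start in the S-game by replying with a neighbor of her opening vertex (the paper phrases this via edge-transitivity), and both then complete a basis through the center in two more moves. Your repackaging of the six good pairs as the edges of two disjoint triangles on the non-neighbors of the center is just a cleaner way of organizing the paper's case analysis with the pairs $A_1$, $A_2$, $A_3$, and it checks out.
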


\begin{proof}
Let the vertices of $\mathcal{P}$ be labeled as in Fig.~\ref{fig_petersen}, and let $A_1=\{w_2,w_5\}$, $A_2=\{w_3,w_4\}$, and $A_3=\{u_3,u_4\}$. First, we consider the R-game. Since $\mathcal{P}$ is vertex-transitive (see~\cite{petersen_transitive}), we may assume that ${\rm R}^*$ occupies $u_1$ after his first move. If ${\rm S}^*$ selects a vertex in $N(u_1)$ on her first move, ${\rm R}^*$ can select a vertex of an $A_i$, $i\in[3]$, on his second move; if ${\rm S}^*$ selects a vertex of an $A_j$, $j\in[3]$, on her first move, then ${\rm R}^*$ can select the other vertex of $A_j$ on his second move. If  ${\rm R}^*$ selects a vertex of $A_1=\{w_2,w_5\}$, say $w_2$, on his second move, he can select a vertex in $\{u_4, w_3\}$ on his third move; if ${\rm R}^*$ selects a vertex of $A_2=\{w_3, w_4\}$, say $w_3$, on his second move, he can select a vertex of $\{u_4, w_2\}$ on his third move; if ${\rm R}^*$ selects a vertex of $A_3=\{u_3, u_4\}$, say $u_3$, on his second move, he can select a vertex in $\{w_4, w_5\}$ on his third move. In each case, the set of vertices occupied by ${\rm R}^*$, after his third move, forms a resolving set of $\mathcal{P}$ by Lemma~\ref{petersen_characterization}.

Second, we consider the S-game. Since $\mathcal{P}$ is edge-transitive (see~\cite{petersen_transitive}), we may assume that ${\rm S}^*$ selects  $u_5$ on her first move and ${\rm R}^*$ selects $u_1$ on his first move. If ${\rm S}^*$ selects a vertex of an $A_i$, $i\in[3]$, on her second move, then ${\rm R}^*$ can select the other vertex of $A_i$ on his second move; if ${\rm S}^*$ selects a vertex of $N(u_1)-\{u_5\}=\{u_2, w_1\}$ on her second move, ${\rm R}^*$ can select a vertex of an $A_j$, $j\in[3]$, on his second move. By applying the above argument for the R-game, it is easy to see that ${\rm R}^*$ can occupy a resolving set of $\mathcal{P}$ after his third move.

Thus, $o(\mathcal{P})=\mathcal{R}$ and $R_{\rm MB}(\mathcal{P})=R'_{\rm MB}(\mathcal{P})=3$.
\end{proof}

\subsection*{Bouquet of cycles}

Let $B_m$, $m\ge 2$, be a bouquet of $m$ cycles (i.e., the vertex sum of $m$ cycles at one common vertex),  and let $w$ be the cut-vertex of $B_m$ (see Fig.~\ref{fig_bouquet}). Let $C^1, \ldots, C^m$ be the $m$ cycles of $B_m$. For each $i \in [m]$, let $P^i=C^i-w$.

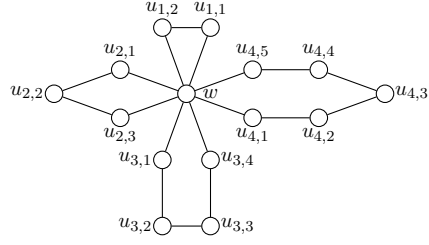
\begin{figure}[ht]
\centering
\begin{tikzpicture}[scale=.8, transform shape]

\node [draw, shape=circle, scale=.8] (0) at  (0,0) {};
\node [draw, shape=circle, scale=.8] (1) at  (-0.4,1.1) {};
\node [draw, shape=circle, scale=.8] (2) at  (0.4,1.1) {};
\node [draw, shape=circle, scale=.8] (a1) at  (-1.1,0.4) {};
\node [draw, shape=circle, scale=.8] (a2) at  (-2.2,0) {};
\node [draw, shape=circle, scale=.8] (a3) at  (-1.1,-0.4) {};
\node [draw, shape=circle, scale=.8] (b1) at  (-0.4,-1.1) {};
\node [draw, shape=circle, scale=.8] (b2) at  (-0.4,-2.2) {};
\node [draw, shape=circle, scale=.8] (b3) at  (0.4,-2.2) {};
\node [draw, shape=circle, scale=.8] (b4) at  (0.4,-1.1) {};
\node [draw, shape=circle, scale=.8] (c1) at  (1.1,-0.4) {};
\node [draw, shape=circle, scale=.8] (c2) at  (2.2,-0.4) {};
\node [draw, shape=circle, scale=.8] (c3) at  (3.3,0) {};
\node [draw, shape=circle, scale=.8] (c4) at  (2.2,0.4) {};
\node [draw, shape=circle, scale=.8] (c5) at  (1.1,0.4) {};

\node [scale=0.9] at (0.4,0) {$w$};
\node [scale=0.9] at (0.4,1.4) {$u_{1,1}$};
\node [scale=0.9] at (-0.4,1.4) {$u_{1,2}$};
\node [scale=0.9] at (-1.1,0.7) {$u_{2,1}$};
\node [scale=0.9] at (-2.65,0) {$u_{2,2}$};
\node [scale=0.9] at (-1.1,-0.7) {$u_{2,3}$};
\node [scale=0.9] at (-0.85,-1.1) {$u_{3,1}$};
\node [scale=0.9] at (-0.85,-2.2) {$u_{3,2}$};
\node [scale=0.9] at (0.85,-2.2) {$u_{3,3}$};
\node [scale=0.9] at (0.85,-1.1) {$u_{3,4}$};
\node [scale=0.9] at (1.1,-0.7) {$u_{4,1}$};
\node [scale=0.9] at (2.2,-0.7) {$u_{4,2}$};
\node [scale=0.9] at (3.75,0) {$u_{4,3}$};
\node [scale=0.9] at (2.2,0.7) {$u_{4,4}$};
\node [scale=0.9] at (1.1,0.7) {$u_{4,5}$};

\draw(0)--(1)--(2)--(0);\draw(0)--(a1)--(a2)--(a3)--(0);\draw(0)--(b1)--(b2)--(b3)--(b4)--(0);
\draw(0)--(c1)--(c2)--(c3)--(c4)--(c5)--(0);

\end{tikzpicture}
\caption{A bouquet of four cycles $B_4$.}
\label{fig_bouquet}
\end{figure}

\begin{theorem}\emph{\cite{bouquet}}\label{dim_bouquet}
If $B_m$ is a bouquet of $m\ge 2$ cycles of which $x$ cycles are even, then
\begin{equation*}
\dim(B_m)=\left\{
\begin{array}{ll}
m; & x=0,\\
m+x-1; &  x \ge 1.
\end{array}\right.
\end{equation*}
\end{theorem}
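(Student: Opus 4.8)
The plan is to exploit the cut-vertex structure of $B_m$, reducing everything to one observation: cross-cycle distances factor through $w$. Concretely, if $v\in V(C^i)\setminus\{w\}$ and $z\in V(C^j)\setminus\{w\}$ with $i\neq j$, then every $z$--$v$ path passes through $w$, so $d(z,v)=d(z,w)+d(w,v)$. Writing $r(v)=d(w,v)$, a landmark lying outside $C^i$ thus ``sees'' a vertex $v\in C^i$ only through the value $r(v)$. Call $v\in V(C^i)\setminus\{w\}$ \emph{guarded} by a landmark $z\in V(C^i)\setminus\{w\}$ if $w$ lies on no shortest $z$--$v$ path, i.e.\ $d(z,v)<d(z,w)+r(v)$. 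The whole argument rests on the following resolution criterion, immediate from the factoring identity: two vertices $v\in C^i$ and $v'\in C^j$ ($i\neq j$) with $r(v)=r(v')$ are resolved by a set $W$ if and only if $W$ guards $v$ inside $C^i$ or guards $v'$ inside $C^j$; in particular any two \emph{unguarded} equidistant vertices in distinct cycles receive identical codes.

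Next I would record two within-cycle facts. Label $C^i$ cyclically as $w=x_0,x_1,\dots,x_{\ell_i-1}$, so $r(x_q)=\min(q,\ell_i-q)$ and the \emph{mirror pairs} are $\{x_q,x_{\ell_i-q}\}$. A direct computation shows $d(x_p,x_q)=d(x_p,x_{\ell_i-q})$ for all $q$ exactly when $p\in\{0,\ell_i/2\}$; hence a single landmark $x_p$ resolves \emph{every} mirror pair of $C^i$ as soon as $p\notin\{0,\ell_i/2\}$, the value $\ell_i/2$ occurring only for even $\ell_i$. Since mirror pairs can be resolved only from inside their own cycle, every resolving set meets each $V(C^i)\setminus\{w\}$, giving $\dim(B_m)\ge m$, and a cycle met in a single vertex is not met at its antipode. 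For the extra $x-1$ when $x\ge1$, suppose $W$ meets an even cycle $C^i$ in exactly one vertex $x_p$; then $p\neq0,\ell_i/2$, and checking the two neighbours $x_1,x_{\ell_i-1}$ of $w$ shows $x_p$ guards exactly one of them, leaving the other unguarded (the two ``sides'' of a landmark overlap only at the forbidden antipode). If two even cycles were each met in a single vertex, their two unguarded $w$-neighbours would be unguarded, equidistant from $w$, and in distinct cycles, hence unresolved by the criterion. Thus at most one even cycle is met in a single vertex, so at least $x-1$ of them contribute $\ge2$ vertices, yielding $|W|\ge(m-x)+1+2(x-1)=m+x-1$. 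A landmark at $w$ changes nothing, since it sees every vertex through $r(\cdot)$ and resolves no mirror pair.

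For the upper bound I would exhibit a set of the stated size and verify it via the guarding criterion. In each odd cycle place one landmark at the central vertex $x_{(\ell_i-1)/2}$; from the middle of an odd cycle every vertex is reachable without passing through $w$, so such a cycle has \emph{no} unguarded vertex and its mirror pairs are resolved. In all even cycles but one, place the two neighbours $x_1,x_{\ell_i-1}$ of $w$, which guard all vertices (one side each) and resolve the mirror pairs; in the single remaining even cycle place one off-antipode landmark, say $x_1$. This uses $(m-x)+2(x-1)+1=m+x-1$ vertices (and $m$ when $x=0$). Now two vertices at different distances from $w$, or one equal to $w$, are separated by the distance-to-$w$ data carried by external landmarks; a mirror pair is separated by its own cycle's non-antipodal landmark; and two equidistant vertices in different cycles are separated because at most one is unguarded, all unguarded vertices living in the one distinguished even cycle, where they are still separated pairwise as mirror pairs. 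Hence the set resolves $B_m$ and the formula follows.

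The main obstacle is the sharp lower bound for $x\ge1$: one must recognize that the obstruction is not the antipodes (handled by any single off-centre landmark) but the \emph{pair of $w$-neighbours} of an even cycle, exactly one of which a single landmark inevitably fails to guard, together with the fact that two such unguarded neighbours in different cycles are genuinely confused. Converting the informal statement ``the two sides of a landmark meet only at the antipode'' into a clean inequality, uniformly over all even cycle lengths $\ell_i$, is the one step that demands care.
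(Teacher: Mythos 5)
This theorem is stated in the paper as a quoted result from the reference on amalgamations of cycles; the paper supplies no proof of its own, so there is nothing internal to compare against. Your argument is a correct, self-contained proof, and it checks out at the points that matter. The cut-vertex factorization $d(z,v)=d(z,w)+d(w,v)$ and the resulting resolution criterion for equidistant vertices in distinct cycles are exactly right; the modular computation behind ``$x_p$ fails to separate some mirror pair iff $p\in\{0,\ell_i/2\}$'' reduces to $d(p,q)=d(p,-q)$ in $\mathbb{Z}_{\ell_i}$ iff $2p\equiv 0$ or $2q\equiv 0$, which gives both the lower bound of $m$ and the fact that a lone landmark in a cycle cannot sit at $w$'s antipode; and the inequality showing that an off-antipode $x_p$ in an even cycle guards exactly one of $x_1,x_{\ell_i-1}$ (namely $d(x_p,x_{\ell_i-1})=p+1=d(x_p,w)+1$ precisely when $p<\ell_i/2$) is what forces the extra $x-1$. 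The upper-bound set (odd cycles: the central vertex, which guards everything; all but one even cycle: both neighbours of $w$; one even cycle: a single $x_1$) verifies cleanly against your criterion. Two small points of presentation rather than substance: when you say vertices at different distances from $w$ are separated by ``external landmarks,'' note that for $m=2$ and vertices in distinct cycles there is no landmark external to both, but a landmark in the cycle of the vertex with smaller $r$ still works since $d(z,v)\le d(z,w)+r(v)<d(z,w)+r(v')=d(z,v')$; and it is worth stating explicitly that every cycle of length at least $3$ contains a genuine mirror pair, which is what makes the bound $\dim(B_m)\ge m$ non-vacuous for triangles.
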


\begin{lemma}\emph{\cite{bouquet}}\label{bouquet_even}
If $W$ is a resolving set of a bouquet of cycles $B_m$, $m \ge 2$, then
\begin{itemize}
\item[\emph{(a)}] for each $i\in [m]$, $|W \cap V(P^i)| \ge 1$; and
\item[\emph{(b)}] for any two distinct even cycles $C^i$ and $C^j$ of $B_m$, $| W \cap (V(P^i) \cup V(P^j))| \ge 3$.
\end{itemize}
\end{lemma}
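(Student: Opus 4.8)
The plan is to exploit the cut-vertex structure of $B_m$, and the single most useful observation is that the common vertex $w$ separates the arms: for any vertex $z$ lying outside $V(P^i)$ (that is, $z=w$ or $z$ belongs to another arm) and any $u\in V(P^i)$, every $z$–$u$ path runs through $w$, so $d(z,u)=d(z,w)+d(w,u)$. Consequently, as far as distinguishing two vertices \emph{inside the same arm} $P^i$ is concerned, every vertex outside $P^i$ carries exactly the same information, namely the $w$-distance $d(w,\cdot)$. I will call two distinct vertices of $P^i$ equidistant from $w$ a \emph{symmetric pair}; such a pair can be resolved only by a vertex of $W$ lying in $P^i$ itself. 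I would record these facts first, since both parts reduce to them.

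For (a), suppose $W\cap V(P^i)=\emptyset$ for some $i$. Since $C^i$ has length at least $3$, the two neighbours of $w$ on $C^i$ are distinct and both at distance $1$ from $w$, hence form a symmetric pair in $P^i$. By the separation observation, every vertex of $W$ (all lying outside $P^i$) assigns these two vertices equal distances, so they share the same code and $W$ is not resolving, a contradiction. Thus $|W\cap V(P^i)|\ge 1$ for every $i$.

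For (b), part (a) already gives $|W\cap(V(P^i)\cup V(P^j))|\ge 2$, so it suffices to rule out $|W\cap V(P^i)|=|W\cap V(P^j)|=1$ when both $C^i,C^j$ are even. Write $\{a\}=W\cap V(P^i)$ and $\{b\}=W\cap V(P^j)$, and let $\mu_i$ (resp.\ $\mu_j$) be the antipode of $w$ in $C^i$ (resp.\ $C^j$), i.e.\ the unique vertex at maximum distance $t_i:=d(w,\mu_i)$ (resp.\ $t_j$). The key computation I would carry out is the antipode dichotomy: a lone witness $a$ resolves \emph{every} symmetric pair of its even arm precisely when $a\neq\mu_i$, whereas if $a=\mu_i$ one checks that $d(a,u)=t_i-d(w,u)$ depends only on $d(w,u)$, so $a$ contributes nothing beyond the external vertices and some symmetric pair of $P^i$ stays unresolved. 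Hence if $a=\mu_i$ (or $b=\mu_j$), then $W$ already fails to resolve arm $i$ (resp.\ $j$).

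It then remains to treat $a\neq\mu_i$ and $b\neq\mu_j$, where I would produce an unresolved \emph{cross-arm} pair: let $u$ be the neighbour of $w$ on $C^i$ on the side opposite $a$, and $u'$ the neighbour of $w$ on $C^j$ opposite $b$, both at $w$-distance $1$. A short distance computation, using the inequality $1+d(w,a)\le t_i$ that holds exactly because $a$ is not the antipode, gives $d(a,u)=d(a,w)+1=d(a,u')$ and symmetrically $d(b,u')=d(b,w)+1=d(b,u)$, while every remaining witness reaches $u$ and $u'$ through $w$ and so assigns them equal distances; thus $\code_W(u)=\code_W(u')$, contradicting that $W$ resolves. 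The step I expect to be the main obstacle is precisely this dichotomy: evenness is what forces the unique deepest vertex to be self-paired, so that placing the lone witness there destroys internal resolution, while any other placement leaves the cross-arm pair unresolved — and getting the through-$w$ distance identities exactly right in the cross-arm case (the one place evenness is genuinely used, as the odd-cycle analogue fails) is the spot demanding care.
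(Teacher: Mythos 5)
The paper does not prove this lemma; it is imported verbatim from the cited reference \cite{bouquet}, so there is no internal proof to compare against. Judged on its own, your argument is correct and complete: the separation identity $d(z,u)=d(z,w)+d(w,u)$ for $z$ outside an arm is exactly the right reduction, part (a) follows immediately from the symmetric pair of neighbours of $w$, and in part (b) the dichotomy is sound --- if the lone witness is the antipode $\mu_i$ then $d(\mu_i,u)=t_i-d(w,u)$ leaves the neighbour pair of $w$ in $C^i$ unresolved, while if $a\neq\mu_i$ and $b\neq\mu_j$ the computation $d(a,u)=d(a,w)+1=d(a,u')$ (valid precisely because $d(w,a)+1\le t_i$) shows the cross-arm pair of $w$-neighbours shares a code. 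The only hypotheses you implicitly use (even cycles have length at least $4$, hence a unique antipode and at least one symmetric pair; the remaining witnesses all see $u$ and $u'$ through $w$) are all available, so the proof stands as a valid self-contained replacement for the citation.
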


\begin{theorem}
If $B_m$ is a bouquet of $m\ge 2$ cycles of which $z$ are $4$-cycles,  then
\begin{equation*}
o(B_m)=\left\{
\begin{array}{ll}
\mathcal{R}; & z \le 2,\\
\mathcal{N}; & z = 3,\\
\mathcal{S}; & z \ge 4.
\end{array}\right.
\end{equation*}
Moreover, if $o(B_m)=\mathcal{R}$, then $R_{\rm MB}(B_m)=R'_{\rm MB}(B_m)=\dim(B_m)$; if $o(B_m)=\mathcal{S}$, then $S_{\rm MB}(B_m)=S'_{\rm MB}(B_m)=4$.
\end{theorem}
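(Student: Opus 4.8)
The plan is to reduce the whole statement to the behaviour of the $z$ four-cycles, since these are the only cycles whose path $P^i$ contains a twin pair. First I would record the local structure: in a $4$-cycle $C^i$ the two neighbours of the cut-vertex $w$, say $u_{i,1}$ and $u_{i,3}$, satisfy $N(u_{i,1})=N(u_{i,3})=\{w,u_{i,2}\}$ and hence form a twin equivalence class, while in a cycle of length other than $4$ no two vertices of $P^i$ are twins. By Observation~\ref{obs_twin} every resolving set meets each such pair, and by Lemma~\ref{bouquet_even}(b) at most one even cycle may contribute a single vertex of its $P^i$. Call a four-cycle \emph{protected} for ${\rm R}^*$ if he owns two vertices of $P^i$ (necessarily including a twin) and \emph{deficient} if he owns at most one; then ${\rm R}^*$ can build a resolving set exactly when he secures a twin in every four-cycle and protects all but at most one of them, i.e.\ protects at least $z-1$ of the $z$ four-cycles, the remaining cycles being handled by the standard pairings of~\cite{bouquet}.

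The governing observation is a vertex count inside $\bigcup_i V(P^i)$, which contains $3z$ vertices: protecting $z-1$ four-cycles and keeping a twin in the last forces ${\rm R}^*$ to own at least $2(z-1)+1=2z-1$ of them, whereas by always answering inside $\bigcup_i V(P^i)$ the opponent can hold ${\rm R}^*$ to at most $\lceil 3z/2\rceil$ of them in the R-game and to at most $\lfloor 3z/2\rfloor$ in the S-game. Comparing $2z-1$ with these bounds gives $\lceil 3z/2\rceil\ge 2z-1$ iff $z\le 3$ and $\lfloor 3z/2\rfloor\ge 2z-1$ iff $z\le 2$; since the Maker--Breaker game has no drawn outcome, this already yields $o(B_m)=\mathcal R$ for $z\le 2$, $o(B_m)=\mathcal N$ for $z=3$, and $o(B_m)=\mathcal S$ for $z\ge 4$, once achievability of the winning side is supplied by explicit strategies.

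For $z\le 2$ I would exhibit a \emph{dim}-pairing resolving set and invoke Corollary~\ref{cor:pairing_resolving}: take the twin pair $\{u_{i,1},u_{i,3}\}$ of each four-cycle, add the cross pair $\{u_{1,2},u_{2,2}\}$ of middles when $z=2$ (so that exactly one of the two four-cycles reaches two selected vertices), and append the pairings of the remaining odd and longer even cycles provided by~\cite{bouquet}; every transversal then selects a twin in each four-cycle, leaves exactly one even cycle with a single $P^i$-vertex, and meets the resolving-set description behind Theorem~\ref{dim_bouquet}, so Corollary~\ref{cor:pairing_resolving} gives $R_{\rm MB}(B_m)=R'_{\rm MB}(B_m)=\dim(B_m)$. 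For the R-game of the case $z=3$ I would have ${\rm R}^*$ play a middle $u_{1,2}$ first and thereafter follow the pairing $\{u_{1,1},u_{1,3}\},\{u_{2,1},u_{2,3}\},\{u_{3,1},u_{3,3}\},\{u_{2,2},u_{3,2}\}$, which guarantees a twin in all three four-cycles, two protected cycles and one twin-safe deficient cycle, so ${\rm R}^*$ wins as the first player.

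For $z\ge 4$ (and for the S-game of $z=3$) I would give ${\rm S}^*$ an explicit four-move win: she captures the middles of two four-cycles whose middles ${\rm R}^*$ has not yet touched---possible because $z\ge 4$ leaves two such cycles free even after ${\rm R}^*$'s pre-emptive moves---and then plays one twin in each, so that after her fourth move ${\rm S}^*$ owns two vertices of each of these two $P^i$ while ${\rm R}^*$ owns at most one, contradicting Lemma~\ref{bouquet_even}(b) and dooming ${\rm R}^*$. Conversely, against an ${\rm R}^*$ who answers every twin grab with the other twin of the same pair, three moves of ${\rm S}^*$ can neither complete a twin pair nor make two cycles deficient, so ${\rm R}^*$ is not yet lost; this gives the matching lower bound and hence $S_{\rm MB}(B_m)=S'_{\rm MB}(B_m)=4$. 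The step I expect to be the main obstacle is making the reduction rigorous in the losing direction---proving that once ${\rm S}^*$ owns the middle of an ${\rm R}^*$-untouched four-cycle she can really force it deficient (the negative half of the count)---together with the sufficiency check that the pairings assembled for the $\mathcal R$ case and the $z=3$ R-game genuinely yield resolving sets, which rests on the full structure of resolving sets of bouquets from~\cite{bouquet} and requires verifying Lemma~\ref{bouquet_even}(b) simultaneously for all pairs of even cycles.
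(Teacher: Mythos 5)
Your proposal follows essentially the same route as the paper: the negative cases rest on the same count (every resolving set must contain at least $2z-1$ of the $3z$ vertices of $\bigcup_{i=1}^{z}V(P^i)$ by Lemma~\ref{bouquet_even}(b) together with the twin pairs $\{u_{i,1},u_{i,3}\}$, while the opponent can restrict ${\rm R}^*$ to $\lceil 3z/2\rceil$, resp.\ $\lfloor 3z/2\rfloor$, of them), and the positive cases use the same dim-pairing resolving sets, the same twin-in-each-$4$-cycle-plus-two-middles strategy for the $z=3$ R-game, and the same four-move win for ${\rm S}^*$ when $z\ge 4$. The only slip is the side remark that $4$-cycles are the only cycles whose $P^i$ contains twins (the two non-cut vertices of a $3$-cycle are adjacent twins), but this is harmless since such pairs are absorbed by the pairing strategy; your acknowledged reliance on the structure of resolving sets of bouquets from~\cite{bouquet} is shared by the paper's own proof.
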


\begin{proof}
Let $w$ be the cut-vertex of $B_m$, where $m \ge 2$. Let $C^1, \ldots, C^z$ be cycles isomorphic to $C_4$, let $C^{z+1}, \ldots, C^x$ be  even cycles that are not isomorphic to $C_4$, and let $C^{x+1}, \ldots, C^m$ be odd cycles of $B_m$; notice $x \ge z$. If $C^i$ is an odd cycle of length $2k_i+1$, let $C^i$ be given by $w, u_{i,1}, u_{i,2}, \ldots, u_{i, k_i}, u_{i, k_i+1}, \ldots, u_{i, 2k_i}, w$; if $C^j$ is an even cycle of length $2k_j$, let $C^j$ be given by $w, u_{j,1}, u_{j,2}, \ldots, u_{j, k_j-1}, u_{j,k_j}, u_{j, k_j+1}, \ldots, u_{j, 2k_j-1}, w$ (see Fig.~\ref{fig_bouquet} for the labeling of the vertices of a $B_4$). We note that $u_{i,1}$ and $u_{i,3}$ are twins for each $i\in[z]$.

\medskip\noindent
\textbf{Case 1}: $z \le 2$.\\
If $z=0$ and $x=0$, then $\cup_{i=1}^{m} \{\{u_{i, k_i}, u_{i, k_i+1}\}\}$ is a dim-pairing resolving set of $B_m$. If $z=0$ and $x \ge 1$, then $\{\{u_{1, k_1-1},u_{1, k_1+1}\}\} \cup (\cup_{i=2}^{x} \{\{u_{i,1}, u_{i,k_i-1}\},\{u_{i, k_i+1}, u_{i, 2k_i-1}\}\}) \cup (\cup_{j=x+1}^{m}\{\{u_{j, k_j}, u_{j, k_j+1}\}\})$ is a dim-pairing resolving set of $B_m$. If $z=1$, then
$$\{\{u_{1,1}, u_{1,3}\}\} \cup (\cup_{i=2}^x \{\{u_{i,1}, u_{i,k_i-1}\},\{u_{i, k_i+1}, u_{i, 2k_i-1}\}\}) \cup (\cup_{j=x+1}^{m}\{\{u_{j, k_j}, u_{j, k_j+1}\}\})$$
is a dim-pairing resolving set of $B_m$. If $z=2$, then
\begin{eqnarray*}
\{\{u_{1,1}, u_{1,3}\}, \{u_{2,1}, u_{2,3}\}, \{u_{1,2},u_{2,2}\}\}  & \cup & (\cup_{i=3}^x \{\{u_{i,1}, u_{i,k_i-1}\},\{u_{i, k_i+1}, u_{i, 2k_i-1}\}\}) \\
& \cup & (\cup_{j=x+1}^{m}\{\{u_{j, k_j}, u_{j, k_j+1}\}\})
\end{eqnarray*}
is a dim-pairing resolving set of $B_m$. So, in each case, $o(B_m)=\mathcal{R}$ by Proposition~\ref{prop:pairing-in-hypergraphs} and $R_{\rm MB}(B_m)=R'_{\rm MB}(B_m)=\dim(B_m)$ by Corollary~\ref{cor:pairing_resolving}.

\medskip\noindent
\textbf{Case 2}: $z =3$.\\
Note that $|\cup_{i=1}^{3}V(P^i)|=9$ and, for any resolving set $W$ of $B_m$, $|W \cap (\cup_{i=1}^{3}V(P^i))| \ge 5$ by Lemma~\ref{bouquet_even}(b). In the S-game, ${\rm S}^*$ can occupy five vertices of $\cup_{i=1}^{3}V(P^i)$ after her fifth move. Thus, ${\rm R}^*$ fails to occupy vertices that form a resolving set of $B_m$. In the R-game, ${\rm R}^*$ can occupy one vertex of each pair in $(\cup_{i=1}^{3}\{\{u_{i,1}, u_{i,3}\}\}) \cup (\cup_{i=4}^x \{\{u_{i,1}, u_{i,k_i-1}\},\{u_{i, k_i+1}, u_{i, 2k_i-1}\}\}) \cup (\cup_{j=x+1}^{m}\{\{u_{j, k_j}, u_{j, k_j+1}\}\})$ and two vertices of $\cup_{i=1}^3\{u_{i,2}\}$; thus, ${\rm R}^*$ can occupy vertices that form a resolving set of $B_m$. So, $o(B_m)=\mathcal{N}$.

\medskip\noindent
\textbf{Case 3}: $z \ge 4$.\\
Note that $|\cup_{i=1}^{4}V(P^i)|=12$ and, for any resolving set $W$ of $B_m$, $|W \cap (\cup_{i=1}^{4}V(P^i))| \ge 7$ by Lemma~\ref{bouquet_even}(b). Regardless of whether ${\rm S}^*$ plays first or second, ${\rm S}^*$ can occupy 6 vertices of $\cup_{i=1}^{4}V(P^i)$ after her sixth move. So, ${\rm R}^*$ fails to occupy vertices that form a resolving set of $B_m$; thus $o(B_m)=\mathcal{S}$. In determining $S_{\rm MB}(B_m)$ and $S'_{\rm MB}(B_m)$, we note that the optimal strategy for ${\rm R}^*$ is to occupy at least a vertex in each pair of $\cup_{i=1}^{z}\{\{u_{i,1}, u_{i,3}\}\}$, and the optimal strategy for ${\rm S}^*$ is to occupy two vertices each in $V(P^i)$ and $V(P^j)$ for distinct $i,j\in[z]$. By relabeling the vertices of $\cup_{i=1}^z V(P^i)$ if necessary, we may assume that the two players occupy the vertices of $B_m$ in the order of  $V(P^1), \ldots, V(P^z)$. In the S-game, ${\rm S}^*$ can occupy two vertices of $V(P^1)$ after her second move, ${\rm R}^*$ would have occupied a vertex in $\{u_{1,1}, u_{1,3}\} \subset V(P^1)$ and a vertex in $\{u_{2,1}, u_{2,3}\} \subset V(P^2)$ after his second move, and ${\rm S}^*$ can occupy two vertices of $V(P^3)$ on her third and fourth move; thus, ${\rm S}^*$ wins after her fourth move. In the R-game, ${\rm R}^*$ occupies a vertex in $\{u_{1,1}, u_{1,3}\}$ after his first move, and ${\rm S}^*$ can occupy two vertices of $V(P^2)$ after her second move (${\rm R}^*$ would have occupied a vertex in $\{u_{2,1}, u_{2,3}\}$ on his second move). If ${\rm R}^*$ occupies a vertex in $\cup_{i=1}^{3}V(P^i)$ that has not yet been taken on his third move, ${\rm S}^*$ can occupy two vertices of $V(P^4)$ on her third and fourth move. So, in the R-game, ${\rm S}^*$ wins after her fourth move. Thus, $S_{\rm MB}(B_m)=S'_{\rm MB}(B_m)=4$.
\end{proof}

\subsection*{Complete multi-partite graphs}

The metric dimension of complete multi-partite graphs was determined in~\cite{kpartite}.

\begin{theorem}\emph{\cite{kpartite}}\label{dim_kpartite}
If $G=K_{a_1, \ldots, a_k}$, where $k \ge 2$, $n=\sum_{i=1}^{k}a_i$, and $s$ is the number of partite sets of $G$ consisting of one element, then
\begin{equation*}
\dim(G)=\left\{
\begin{array}{ll}
n-k; & s = 0,\\
n+s-k-1; & s \neq 0.
\end{array}\right.
\end{equation*}
\end{theorem}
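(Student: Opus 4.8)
The plan is to exploit the very rigid metric structure of a complete multi-partite graph: its diameter is $2$, with $d(x,y)=1$ precisely when $x$ and $y$ lie in different parts and $d(x,y)=2$ when they are distinct vertices of a common part. In particular any two vertices of the same part are twins, so Observation~\ref{obs_twin} forces a resolving set $W$ to contain all but at most one vertex of each part. Writing $O_i$ for the set of vertices of part $i$ omitted from $W$, this says $|O_i|\le 1$ for every $i$, whence $|W| = n - \sum_i |O_i| \ge n - k$ already as a first estimate.

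First I would reduce the resolving condition to the omitted vertices alone. If $x\in W$, then $\code_W(x)$ carries a $0$ in the coordinate indexed by $x$, a coordinate in which every other vertex records a positive distance; hence every vertex of $W$ is automatically resolved from all others, and $W$ resolves $G$ if and only if it separates every pair of vertices lying outside $W$. For two omitted vertices $x\in O_i$ and $y\in O_j$ with $i\ne j$, the only distances that occur are $1$ and $2$: $x$ records a $2$ exactly on $W\cap(\text{part }i)$ and $y$ records a $2$ exactly on $W\cap(\text{part }j)$, so $\code_W(x)=\code_W(y)$ if and only if both $W\cap(\text{part }i)$ and $W\cap(\text{part }j)$ are empty. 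Since $|O_i|\le 1$, a part can be entirely absent from $W$ only if it is a singleton whose unique vertex is omitted. This yields the clean characterization I would record: a set $W$ with $|O_i|\le 1$ for all $i$ is a resolving set if and only if at most one of the $s$ singleton parts has its vertex omitted.

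With this characterization the extremal count is immediate. We have $|W| = n - \#\{i : O_i\ne\emptyset\}$, so minimizing $|W|$ means omitting a vertex from as many parts as possible. Each of the $k-s$ non-singleton parts may safely contribute an omitted vertex, while among the $s$ singleton parts at most one may. Hence the number of parts carrying an omission is at most $k$ when $s=0$ and at most $(k-s)+1$ when $s\ge 1$, giving $\dim(G)\ge n-k$ and $\dim(G)\ge n-(k-s+1)=n+s-k-1$, respectively. For the matching upper bound I would exhibit the explicit set $W$ obtained by deleting one vertex from each non-singleton part and (when $s\ge 1$) exactly one singleton vertex; the characterization above confirms $W$ is resolving, and its cardinality meets the bound, including the degenerate case $k=s$ (the complete graph $K_k$), where the formula correctly returns $k-1$.

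The routine distance bookkeeping aside, the one genuinely load-bearing step is the characterization of indistinguishable pairs. The point is that, once the twin constraint is imposed, the sole surviving obstruction is a pair of omitted singleton vertices, because only a singleton part can be wholly absent from $W$. This is exactly what produces the $+(s-1)$ correction and cleanly separates the cases $s=0$ and $s\ne 0$; everything else is an optimization over how many parts may donate an omitted vertex, so I expect the singleton analysis to be the crux rather than the two estimates themselves.
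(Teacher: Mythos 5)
Your proof is correct and complete. Note, however, that the paper offers no proof of this statement to compare against: Theorem~\ref{dim_kpartite} is imported from the reference \cite{kpartite} and stated without argument, so your write-up is a self-contained derivation rather than an alternative to anything in the text. Your argument is sound at every step: the reduction to pairs of omitted vertices (since any $x\in W$ is separated from everything by its own $0$-coordinate), the observation that $|O_i|\le 1$ forces a part to meet $W$ unless it is an omitted singleton, and the resulting characterization that the only obstruction is a pair of omitted singleton vertices. One small remark: the $s$ singleton parts together form a single (adjacent) twin equivalence class of $G$, so the fact that at most one singleton may be omitted also follows directly from Observation~\ref{obs_twin} applied to that class; you instead recover it from the distance-code analysis, which is equally valid and keeps the whole proof running through one uniform mechanism. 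The final optimization and the matching explicit resolving set, including the degenerate case $G=K_k$, are handled correctly.
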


For the MBRG we have the following description.

\begin{theorem}
If $G=K_{a_1, \ldots, a_k}$, where $k \ge 2$, and $s$ is the number of partite sets of $G$ consisting of one element, then
\begin{equation*}
o(G)=\left\{
\begin{array}{ll}
\mathcal{S}; & s \ge 4 \mbox{ or } a_i\ge 4 \mbox{ for some } i \in [k],\\
{} & \ \mbox{or } s=a_i=3  \mbox{ for some } i \in [k],\\
{} &  \mbox{or } a_i=a_j=3 \mbox{ for distinct }i,j \in[k],\\
\mathcal{N}; & s=3 \mbox{ and } a_i \le 2 \mbox{ for each } i\in[k],\\
{} & \mbox{or } s \le 2 \mbox{ and } a_i=3 \mbox{ for exactly one } i\in[k],\\
\mathcal{R}; &  \max\{s, a_i\} \le 2 \mbox{ for each } i \in [k].
\end{array}\right.
\end{equation*}
Moreover, if $o(G)=\mathcal{S}$, then $S_{\rm MB}(G)=S'_{\rm MB}(G)=2$; if $o(G)=\mathcal{R}$, then $R_{\rm MB}(G)=R'_{\rm MB}(G)=\dim(G)$.
\end{theorem}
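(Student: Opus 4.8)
The plan is to analyze the complete multi-partite graph $G=K_{a_1,\ldots,a_k}$ through its twin equivalence classes, which here are precisely the partite sets of size at least $2$ together with the set of all degree-$(n-1)$ vertices coming from singleton parts. First I would establish the key structural fact: within a single partite set $V_i$ the vertices are pairwise twins (nonadjacent, with identical neighborhoods $V(G)\setminus V_i$), so each $V_i$ with $a_i\ge 2$ is a twin equivalence class inducing an independent set; moreover, all the $s$ vertices in singleton parts are mutually adjacent and each adjacent to everything, hence they form a single twin equivalence class (a clique) of cardinality $s$. Thus $G$ has twin classes of sizes $a_1,\ldots,a_k$ (where singleton parts merge into one class of size $s$). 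With this translation in hand, the three outcome cases fall into the framework already built.

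For the $\mathcal{S}$ cases I would invoke Proposition~\ref{prop_twin} directly. If $s\ge 4$ or some $a_i\ge 4$, then $G$ has a twin equivalence class of cardinality at least $4$, so part (a) gives $o(G)=\mathcal{S}$ and $S_{\rm MB}(G)=S'_{\rm MB}(G)=2$. If $s=a_i=3$ for some $i$, or $a_i=a_j=3$ for distinct $i,j$, then $G$ has two distinct twin classes each of cardinality at least $3$, so part (b) yields the same conclusion. (One must check these two classes are genuinely distinct, which is immediate since the singleton-merged class is disjoint from every $V_i$ of size $\ge 2$, and distinct partite sets are disjoint.) For the $\mathcal{R}$ case, where $\max\{s,a_i\}\le 2$ for all $i$, every twin class has cardinality at most $2$, so I would exhibit a dim-pairing resolving set: Observation~\ref{obs_twin} forces any resolving set to contain at least one vertex from each twin class of size $2$, and conversely choosing one vertex from each such class of size $2$ (leaving out exactly one whole class, matching the $\dim$ formula of Theorem~\ref{dim_kpartite}) gives a resolving set. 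Pairing up the two vertices in each size-$2$ class produces a pairing resolving set of size $\dim(G)$; then Proposition~\ref{prop:pairing-in-hypergraphs} gives $o(G)=\mathcal{R}$ and Corollary~\ref{cor:pairing_resolving} gives $R_{\rm MB}(G)=R'_{\rm MB}(G)=\dim(G)$.

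The genuinely delicate part is the $\mathcal{N}$ case and, relatedly, the careful bookkeeping needed to build the dim-pairing resolving set in the $\mathcal{R}$ case so that its size equals $\dim(G)$ exactly (not merely a resolving set). In the first $\mathcal{N}$ subcase ($s=3$, all $a_i\le 2$) there is a single twin class of size $3$ (the singletons) plus several size-$\le 2$ classes, while in the second ($s\le 2$, exactly one $a_i=3$) the unique size-$3$ class is a partite set. In each, I would argue as in the tree and bouquet proofs: the lone size-$3$ class $Q=\{q_1,q_2,q_3\}$ satisfies $|W\cap Q|\ge 2$ for any resolving set $W$, so in the S-game ${\rm S}^*$ can seize two vertices of $Q$ by her second move and win, whereas in the R-game ${\rm R}^*$ moves first into $Q$, defends it (securing two of its vertices), and thereafter completes a pairing strategy on the remaining size-$2$ classes, winning after his $\dim(G)^{\rm th}$ move. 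The main obstacle throughout is verifying that the remaining small twin classes can always be paired into a genuine dim-pairing resolving set of $G$ consistent with the metric dimension value in Theorem~\ref{dim_kpartite}; this requires correctly identifying which single class is omitted and confirming that one representative per remaining class resolves all cross-class pairs, exactly as the twin-class analysis predicts.
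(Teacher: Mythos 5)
Your proposal is correct and takes essentially the same route as the paper: identify the twin equivalence classes of $K_{a_1,\ldots,a_k}$ (the parts of size at least $2$ plus the merged class of singleton parts), apply Proposition~\ref{prop_twin} for the $\mathcal{S}$ cases, exhibit a dim-pairing resolving set built from the size-$2$ twin classes for the $\mathcal{R}$ case, and combine the two ideas (Spoiler seizes two vertices of the unique size-$3$ class in the S-game; Resolver enters that class first, defends it, and runs the pairing strategy on the rest in the R-game) for the $\mathcal{N}$ case. The paper merely spells out the $\mathcal{R}$-case pairing explicitly for $s=0,1,2$ where you describe it uniformly; your parenthetical about ``leaving out exactly one whole class'' is slightly inaccurate (nothing is omitted when $s=0$), but it does not affect the construction.
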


\begin{proof}
Let $V(G)$ be partitioned into $V_1, \ldots, V_k$ such that $V_i=\{u_{i,1}, \ldots, u_{i, a_i}\}$ with $|V_i|=a_i$, where $i \in [k]$ and $k \ge 2$. We may without loss of generality assume that $a_1 \le \cdots \le a_k$.

First, suppose that $s \ge 4$ or $a_i \ge 4$ for some $i\in[k]$.  If $s \ge 4$, then $\cup_{i=1}^{s} V_i$ is a twin equivalence class of cardinality at least 4. If $a_i \ge 4$ for some $i \in [k]$, then $V_k$ is a twin equivalence class of cardinality at least 4. By Proposition~\ref{prop_twin}(a), $o(G)=\mathcal{S}$ and $S_{\rm MB}(G)=S'_{\rm MB}(G)=2$.

Second, suppose that $\max\{s, a_i\} \le 3$ for each $i\in[k]$; further, let $s=3$ or $a_i=3$ for some $i\in[k]$. If $s=a_x=3$ for some $x \in [k]$ or $a_y=a_z=3$ for distinct $y,z\in[k]$, then $G$ has distinct twin equivalence classes of cardinality three; thus, by Proposition~\ref{prop_twin}(b), $o(G)=\mathcal{S}$ and $S_{\rm MB}(G)=S'_{\rm MB}(G)=2$.

So, suppose $s=3$ or $a_i=3$ for exactly one $i\in[k]$, but not both. Let $W$ be any resolving set of $G$. By Observation~\ref{obs_twin}, we have the following: (1) if $s=3$, then $|W \cap \{u_{1,1}, u_{2,1}, u_{3,1}\}| \ge 2$; (2) if $a_i=3$ for exactly one $i\in[k]$, then $a_k=3$ and $|W \cap V_k| \ge 2$. In the S-game, ${\rm S}^*$ can occupy two vertices of $\{u_{1,1}, u_{2,1}, u_{3,1}\}$ after her second move (when $s=3$), or ${\rm S}^*$ can occupy two vertices of $V_k=\{u_{k,1}, u_{k,2}, u_{k,3}\}$ after her second move (when $a_k=3$); thus, in each case, ${\rm R}^*$ fails to occupy vertices that form a resolving set of $G$. Now, we consider the R-game. If $s=3$ (and thus $a_i \le 2$ for each $i\in[k]$), then ${\rm R}^*$ can occupy two vertices of $\{u_{1,1}, u_{2,1}, u_{3,1}\}$ after his second move, and occupy exactly one vertex of each pair in $\cup_{j=4}^{k}\{\{u_{j,1}, u_{j,2}\}\}$ thereafter until he completes his $(k-1)^{\rm th}$ move. If $a_k=3$ (and thus $s \le 2$ and $a_i \le 2$ for each $i \in [k-1]$), then ${\rm R}^*$ can occupy two vertices of $V_k=\{u_{k,1}, u_{k,2}, u_{k,3}\}$, after his second move, and occupy additional vertices (if any) thereafter as follow: (1) if $s=0$, then ${\rm R}^*$ can occupy exactly one vertex of each pair in $\cup_{i=1}^{k-1}\{\{u_{i,1}, u_{i,2}\}\}$; (2) if $s=1$, then ${\rm R}^*$ can occupy exactly one vertex of each pair in $\cup_{i=2}^{k-1}\{\{u_{i,1}, u_{i,2}\}\}$; (3) if $s=2$, then ${\rm R}^*$ can occupy exactly one vertex of each pair in $\{\{u_{1,1}, u_{2,1}\}\} \cup (\cup_{i=3}^{k-1}\{\{u_{i,1}, u_{i,2}\}\})$. So, in each case of the R-game, the vertices chosen by ${\rm R}^*$ form a resolving set of $G$; thus, ${\rm R}^*$ wins. Therefore, $o(G)=\mathcal{N}$.

Third, suppose that $\max\{s, a_i\} \le 2$ for each $i\in[k]$. If $s=0$, then $\cup_{i=1}^{k}\{\{u_{i,1}, u_{i,2}\}\}$ is a dim-pairing resolving set of $G$. If $s=1$, then $\cup_{i=2}^{k}\{\{u_{i,1}, u_{i,2}\}\}$ is a dim-pairing resolving set of $G$. If $s=2$, then $\{\{u_{1,1}, u_{2,1}\}\} \cup(\cup_{i=3}^{k}\{\{u_{i,1}, u_{i,2}\}\})$ is a dim-pairing resolving set of $G$. In each case, $o(G)=\mathcal {R}$ by Proposition~\ref{prop:pairing-in-hypergraphs} and $R_{\rm MB}(G)=R'_{\rm MB}(G)=\dim(G)$ by Corollary~\ref{cor:pairing_resolving}.
\end{proof}

\subsection*{Some grid-like graphs}

The \emph{Cartesian product} $G \cp H$ of graphs $G$ and $H$ is the graph with the vertex set $V(G) \times V(H)$ such that $(u, v)$ is adjacent to $(u', v')$ if and only if either $u = u'$ and $vv' \in E(H)$, or $v= v'$ and $uu' \in E(G)$. Products $P_s \cp P_t$  are known as {\em grid graphs}. For the rest of this section we set $V(P_s) = \{u_1, \ldots, u_s\}$ and $V(P_t) = \{v_1, \ldots, v_t\} $, see Fig.~\ref{fig_grid} for the labeling of $P_8 \cp P_4$.

\begin{figure}[ht!]
\centering
\begin{tikzpicture}[scale=.8, transform shape]

\node [draw, shape=circle, scale=.8] (1) at  (0,0) {};
\node [draw, shape=circle, scale=.8] (2) at  (0,1.3) {};
\node [draw, shape=circle, scale=.8] (3) at  (0,2.6) {};
\node [draw, shape=circle, scale=.8] (4) at  (0,3.9) {};
\node [draw, shape=circle, scale=.8] (a1) at  (1.3,0) {};
\node [draw, shape=circle, scale=.8] (a2) at  (1.3,1.3) {};
\node [draw, shape=circle, scale=.8] (a3) at  (1.3,2.6) {};
\node [draw, shape=circle, scale=.8] (a4) at  (1.3,3.9) {};
\node [draw, shape=circle, scale=.8] (b1) at  (2.6,0) {};
\node [draw, shape=circle, scale=.8] (b2) at  (2.6,1.3) {};
\node [draw, shape=circle, scale=.8] (b3) at  (2.6,2.6) {};
\node [draw, shape=circle, scale=.8] (b4) at  (2.6,3.9) {};
\node [draw, shape=circle, scale=.8] (c1) at  (3.9,0) {};
\node [draw, shape=circle, scale=.8] (c2) at  (3.9,1.3) {};
\node [draw, shape=circle, scale=.8] (c3) at  (3.9,2.6) {};
\node [draw, shape=circle, scale=.8] (c4) at  (3.9,3.9) {};
\node [draw, shape=circle, scale=.8] (d1) at  (5.2,0) {};
\node [draw, shape=circle, scale=.8] (d2) at  (5.2,1.3) {};
\node [draw, shape=circle, scale=.8] (d3) at  (5.2,2.6) {};
\node [draw, shape=circle, scale=.8] (d4) at  (5.2,3.9) {};
\node [draw, shape=circle, scale=.8] (e1) at  (6.5,0) {};
\node [draw, shape=circle, scale=.8] (e2) at  (6.5,1.3) {};
\node [draw, shape=circle, scale=.8] (e3) at  (6.5,2.6) {};
\node [draw, shape=circle, scale=.8] (e4) at  (6.5,3.9) {};
\node [draw, shape=circle, scale=.8] (f1) at  (7.8,0) {};
\node [draw, shape=circle, scale=.8] (f2) at  (7.8,1.3) {};
\node [draw, shape=circle, scale=.8] (f3) at  (7.8,2.6) {};
\node [draw, shape=circle, scale=.8] (f4) at  (7.8,3.9) {};
\node [draw, shape=circle, scale=.8] (g1) at  (9.1,0) {};
\node [draw, shape=circle, scale=.8] (g2) at  (9.1,1.3) {};
\node [draw, shape=circle, scale=.8] (g3) at  (9.1,2.6) {};
\node [draw, shape=circle, scale=.8] (g4) at  (9.1,3.9) {};

\node [scale=0.9] at (-0.75,3.9) {$(u_1,v_1)$};
\node [scale=0.9] at (-0.75,2.6) {$(u_1,v_2)$};
\node [scale=0.9] at (-0.75,1.3) {$(u_1,v_3)$};
\node [scale=0.9] at (-0.75,0) {$(u_1,v_4)$};

\node [scale=0.9] at (1.3,4.3) {$(u_2,v_1)$};
\node [scale=0.9] at (1.9,2.9) {$(u_2,v_2)$};
\node [scale=0.9] at (1.9,1.6) {$(u_2,v_3)$};
\node [scale=0.9] at (1.3,-0.3) {$(u_2,v_4)$};

\node [scale=0.9] at (2.6,4.3) {$(u_3,v_1)$};
\node [scale=0.9] at (3.2,2.9) {$(u_3,v_2)$};
\node [scale=0.9] at (3.2,1.6) {$(u_3,v_3)$};
\node [scale=0.9] at (2.6,-0.3) {$(u_3,v_4)$};

\node [scale=0.9] at (3.9,4.3) {$(u_4,v_1)$};
\node [scale=0.9] at (4.5,2.9) {$(u_4,v_2)$};
\node [scale=0.9] at (4.5,1.6) {$(u_4,v_3)$};
\node [scale=0.9] at (3.9,-0.3) {$(u_4,v_4)$};

\node [scale=0.9] at (5.2,4.3) {$(u_5,v_1)$};
\node [scale=0.9] at (5.8,2.9) {$(u_5,v_2)$};
\node [scale=0.9] at (5.8,1.6) {$(u_5,v_3)$};
\node [scale=0.9] at (5.2,-0.3) {$(u_5,v_4)$};

\node [scale=0.9] at (6.5,4.3) {$(u_6,v_1)$};
\node [scale=0.9] at (7.1,2.9) {$(u_6,v_2)$};
\node [scale=0.9] at (7.1,1.6) {$(u_6,v_3)$};
\node [scale=0.9] at (6.5,-0.3) {$(u_6,v_4)$};

\node [scale=0.9] at (7.8,4.3) {$(u_7,v_1)$};
\node [scale=0.9] at (8.4,2.9) {$(u_7,v_2)$};
\node [scale=0.9] at (8.4,1.6) {$(u_7,v_3)$};
\node [scale=0.9] at (7.8,-0.3) {$(u_7,v_4)$};

\node [scale=0.9] at (9.85,3.9) {$(u_8,v_1)$};
\node [scale=0.9] at (9.85,2.6) {$(u_8,v_2)$};
\node [scale=0.9] at (9.85,1.3) {$(u_8,v_3)$};
\node [scale=0.9] at (9.85,0) {$(u_8,v_4)$};

\draw(1)--(2)--(3)--(4);\draw(a1)--(a2)--(a3)--(a4);\draw(b1)--(b2)--(b3)--(b4);\draw(c1)--(c2)--(c3)--(c4);\draw(d1)--(d2)--(d3)--(d4);\draw(e1)--(e2)--(e3)--(e4);\draw(f1)--(f2)--(f3)--(f4);\draw(g1)--(g2)--(g3)--(g4);
\draw(1)--(a1)--(b1)--(c1)--(d1)--(e1)--(f1)--(g1);\draw(2)--(a2)--(b2)--(c2)--(d2)--(e2)--(f2)--(g2);\draw(3)--(a3)--(b3)--(c3)--(d3)--(e3)--(f3)--(g3);\draw(4)--(a4)--(b4)--(c4)--(d4)--(e4)--(f4)--(g4);

\end{tikzpicture}
\caption{Labeling of $P_8 \cp P_4$.}\label{fig_grid}
\end{figure}

Recall the following results of grid graphs.

\begin{proposition}\emph{\cite{grid}}\label{dim_grid}
If $s,t \ge 2$, then $\dim(P_s \cp P_t)= 2$.
\end{proposition}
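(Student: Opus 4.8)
The plan is to prove the two inequalities $\dim(P_s \cp P_t)\ge 2$ and $\dim(P_s \cp P_t)\le 2$ separately, both resting on the explicit distance formula in a Cartesian product. Recall that distances add across the factors, so with $V(P_s)=\{u_1,\ldots,u_s\}$ and $V(P_t)=\{v_1,\ldots,v_t\}$ one has
\begin{equation*}
d\bigl((u_i,v_j),(u_{i'},v_{j'})\bigr)=|i-i'|+|j-j'| ,
\end{equation*}
the Manhattan distance on the index grid. I would record this formula first, since every subsequent computation reduces to it.

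For the lower bound I would argue that a single vertex can never resolve the grid. Because $s,t\ge 2$, every vertex of $P_s\cp P_t$ has degree at least $2$ (even the four corners have degree exactly $2$, e.g.\ $(u_1,v_1)$ is adjacent to $(u_2,v_1)$ and $(u_1,v_2)$). Hence any candidate singleton $W=\{w\}$ admits two distinct neighbours $x,y$ with $d(w,x)=d(w,y)=1$, so $\code_W(x)=\code_W(y)$ and $W$ fails to resolve $x$ and $y$. Therefore $\dim(P_s\cp P_t)\ge 2$.

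For the upper bound I would exhibit a resolving set of size $2$; the key point, and the one place where a naive choice goes wrong, is that the two reference vertices must be corners lying on a common side rather than opposite corners. Take $W=\{(u_1,v_1),(u_1,v_t)\}$. Using the distance formula, a vertex $(u_i,v_j)$ has
\begin{equation*}
\code_W\bigl((u_i,v_j)\bigr)=\bigl((i-1)+(j-1),\,(i-1)+(t-j)\bigr)=:(d_1,d_2).
\end{equation*}
Then $d_1+d_2=2i+t-3$ and $d_1-d_2=2j-t-1$, so both $i$ and $j$ are recovered from $(d_1,d_2)$; hence the map $(u_i,v_j)\mapsto\code_W((u_i,v_j))$ is injective and $W$ is resolving, giving $\dim(P_s\cp P_t)\le 2$. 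Combining the two bounds yields $\dim(P_s\cp P_t)=2$.

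I do not expect any genuinely hard step here: the argument is elementary once the additive distance formula is in hand. The only subtlety worth flagging is the choice of $W$ — opposite corners $\{(u_1,v_1),(u_s,v_t)\}$ fail, because their two distances always sum to the constant $s+t-2$ and therefore encode only the single quantity $i+j$, leaving every anti-diagonal unresolved. Choosing two corners on one side breaks this degeneracy and is exactly what makes the injectivity computation go through.
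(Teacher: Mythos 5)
Your proof is correct. Note, however, that the paper does not prove this statement at all---it is quoted from the reference \cite{grid} (C\'aceres et al.)---so there is no in-paper argument to compare against; what you have written is a valid self-contained replacement for the citation. Both halves check out: the minimum-degree-two argument correctly rules out singleton resolving sets, and the computation $d_1+d_2=2i+t-3$, $d_1-d_2=2j-t-1$ for $W=\{(u_1,v_1),(u_1,v_t)\}$ does recover $(i,j)$ injectively. Your cautionary remark about opposite corners is also accurate and is consistent with the lemma from \cite{cdim} quoted immediately after this proposition, which states that the metric bases of $P_s\cp P_t$ are exactly the four pairs of corners sharing a side---precisely excluding the two diagonal pairs, for the degeneracy reason you identify.
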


\begin{lemma}\emph{\cite{cdim}}
Let $s,t \ge 2$, and let $W_1 = \{(u_1,v_1),(u_1,v_t)\}$, $W_2= \{(u_1,v_1),(u_s,v_1)\}$, $W_3=\{(u_1,v_t),(u_s,v_t)\}$, and $W_4=\{(u_s,v_1),(u_s,v_t)\}$. Then W is a metric basis of $P_s \cp P_t$ if and only if $W=W_i$ for some $i \in [4]$.
\end{lemma}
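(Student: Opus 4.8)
The plan rests on the explicit distance formula for the grid. Since distances in a Cartesian product add and $d_{P_n}(u_i,u_j)=|i-j|$, we have $d((u_i,v_j),(u_a,v_b))=|i-a|+|j-b|$ for all vertices. For the direction ($\Leftarrow$), I would verify that $W_1=\{(u_1,v_1),(u_1,v_t)\}$ resolves by reading off the two coordinates $|i-1|+|j-1|$ and $|i-1|+|t-j|$ and solving the resulting linear system for $(i,j)$: their sum and difference recover $i$ and $j$ uniquely. The remaining three sets then follow from the symmetry of the grid, namely the Klein four-group generated by the reflections $u_i\mapsto u_{s+1-i}$ and $v_j\mapsto v_{t+1-j}$, which permutes $\{W_1,W_2,W_3,W_4\}$ with orbits $\{W_1,W_4\}$ and $\{W_2,W_3\}$; so a single further check (for $W_2$, structurally identical up to swapping the two coordinates) finishes this direction, and by Proposition~\ref{dim_grid} each such resolving pair is a metric basis.

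For ($\Rightarrow$), write $x=(u_a,v_b)$ and $y=(u_c,v_d)$ for an arbitrary metric basis $\{x,y\}$. The whole argument runs on \emph{local test pairs}: two vertices lying at distance exactly $1$ from one center, so that this center cannot separate them and the burden falls on the other. First I would move one step in the $u$-direction from $x$, comparing $(u_{a-1},v_b)$ and $(u_{a+1},v_b)$ (valid when $2\le a\le s-1$): these are equidistant from $y$ precisely when $a=c$, so a basis with $a=c$ forces $a=c\in\{1,s\}$; symmetrically $b=d$ forces $b=d\in\{1,t\}$. Thus any shared coordinate must be extreme.

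Next I would rule out $a\ne c$ together with $b\ne d$. After swapping $x,y$ to assume $a<c$, the parallel case $b<d$ is killed by the pair $(u_a,v_{b+1}),(u_{a+1},v_b)$ and the antiparallel case $b>d$ by $(u_{a+1},v_b),(u_a,v_{b-1})$; in each case both vertices sit at distance $1$ from $x$, and a short sign computation shows $y$ fails to separate them. Hence exactly one coordinate is shared (both shared would give $x=y$). Assuming $a=c\in\{1,s\}$ and $b<d$, it remains to force $\{b,d\}=\{1,t\}$: inspecting $\psi(j)=|j-b|-|j-d|$, which is constant on $j\le b$ and on $j\ge d$ and strictly increasing between, I would exhibit collisions such as $(u_a,v_1),(u_{a'},v_2)$ when $b\ge 2$ (with $a'$ a neighbor of $a$) and a mirror collision when $d<t$. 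This pins $\{x,y\}$ to $W_1$ or $W_4$; the shared-$v$ case yields $W_2$ or $W_3$.

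The main obstacle is the bookkeeping in the last two reductions: selecting the correct neighbor pair for each sign pattern of $(a-c,\,b-d)$ and checking that the absolute values collapse with the expected signs, while keeping every test vertex inside the grid. The range issue is handled by always attaching the moving step to an \emph{extreme} center, so that the needed neighbor exists whenever $s,t\ge 2$; with the right pairs chosen, each separation check reduces to a one-line identity. The conceptual content—that a metric basis of the grid is a pair of corners sharing a side—emerges once one sees that a shared coordinate is forced to an extreme and that any genuinely mixed displacement reproduces exactly the failure of the main diagonal.
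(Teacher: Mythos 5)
The paper does not prove this lemma; it is imported verbatim from~\cite{cdim}, so there is no in-text argument to compare yours against. Judged on its own, your proof is correct and complete. The forward direction is the standard sum/difference recovery of $(i,j)$ from the code with respect to two corners on a common side, and the symmetry reduction via the two reflections (plus the coordinate swap for $W_2$) is legitimate. For the converse, each of your test pairs checks out: with $x=(u_a,v_b)$, $y=(u_c,v_d)$ and $a=c$, $2\le a\le s-1$, the pair $(u_{a-1},v_b),(u_{a+1},v_b)$ gets code $(1,1+|b-d|)$ from both, forcing a shared coordinate to be extreme; with $a<c$ and $b<d$ the pair $(u_a,v_{b+1}),(u_{a+1},v_b)$ both receive $(1,\,c-a+d-b-1)$, and the antiparallel case is symmetric, so exactly one coordinate is shared; and with $a=c$, $b\ge 2$ the collision $(u_a,v_1)$ versus $(u_{a'},v_2)$ indeed yields the identical code $(b-1,d-1)$, with the mirror collision handling $d<t$. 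All test vertices lie in the grid under the stated hypotheses, none coincides with a basis vertex, and the conclusion $\{b,d\}=\{1,t\}$ with $a=c\in\{1,s\}$ pins $W$ to $W_1$ or $W_4$ (resp.\ $W_2$ or $W_3$ in the shared-$v_j$ case). This is a clean, self-contained replacement for the external citation.
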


For the MBRG on grid graphs we have:

\begin{proposition}
\label{prop:grids}
If $s,t \ge 2$, then $o(P_s \cp P_t)=\mathcal{R}$ and
$$R_{\rm MB}(P_s \cp P_t) = R'_{\rm MB}(P_s \cp P_t) = \dim(P_s \cp P_t)=2.$$
\end{proposition}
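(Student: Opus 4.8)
The plan is to produce a dim-pairing resolving set of $P_s \cp P_t$ of size $2$ and then invoke Corollary~\ref{cor:pairing_resolving} together with Proposition~\ref{prop:pairing-in-hypergraphs}. Since $\dim(P_s \cp P_t)=2$ by Proposition~\ref{dim_grid}, exhibiting a pairing resolving set of cardinality $2$ will immediately give $R_{\rm MB}(P_s \cp P_t)=R'_{\rm MB}(P_s \cp P_t)=2$ and $o(P_s \cp P_t)=\mathcal{R}$. The only real content is choosing the two pairs correctly, and for this I would read off the structure of the metric bases supplied by the lemma recalled just above.

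Concretely, I would work with the four corner vertices $c_1=(u_1,v_1)$, $c_2=(u_1,v_t)$, $c_3=(u_s,v_1)$, and $c_4=(u_s,v_t)$, which are pairwise distinct because $s,t\ge 2$. The lemma characterizing the metric bases of $P_s \cp P_t$ says that the metric bases are exactly the four ``edge'' pairs $W_1=\{c_1,c_2\}$, $W_2=\{c_1,c_3\}$, $W_3=\{c_2,c_4\}$, $W_4=\{c_3,c_4\}$, while the two ``diagonal'' pairs $\{c_1,c_4\}$ and $\{c_2,c_3\}$ are \emph{not} metric bases. The key move is to pair the corners along the diagonals rather than along the edges, namely to set
\[
A=\bigl\{\{c_1,c_4\},\{c_2,c_3\}\bigr\}=\bigl\{\{(u_1,v_1),(u_s,v_t)\},\{(u_1,v_t),(u_s,v_1)\}\bigr\}.
\]
A transversal of $A$ selects one vertex from $\{c_1,c_4\}$ and one from $\{c_2,c_3\}$, so it is one of $\{c_1,c_2\}$, $\{c_1,c_3\}$, $\{c_4,c_2\}$, $\{c_4,c_3\}$, i.e.\ exactly $W_1,W_2,W_3,W_4$. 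Hence every transversal of $A$ is a metric basis, so $A$ is a pairing resolving set with $|A|=2=\dim(P_s \cp P_t)$, that is, a dim-pairing resolving set; Corollary~\ref{cor:pairing_resolving} and Proposition~\ref{prop:pairing-in-hypergraphs} then finish the proof.

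There is no serious obstacle here beyond this choice of pairing, and the only thing to watch is precisely that choice: pairing along edges, e.g.\ $\bigl\{\{c_1,c_2\},\{c_3,c_4\}\bigr\}$, would fail, because then the transversal $\{c_1,c_4\}$ is a diagonal pair and hence not a metric basis. The diagonal pairing is forced exactly because it turns every transversal into an edge pair, and the lemma guarantees that the edge pairs are the metric bases. I would also note that no independent verification that the transversals resolve $P_s \cp P_t$ is needed, since the lemma already identifies them as metric bases.
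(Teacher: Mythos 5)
Your proof is correct and follows exactly the paper's argument: the paper likewise takes the diagonal pairing $\{\{(u_1,v_1),(u_s,v_t)\},\{(u_s,v_1),(u_1,v_t)\}\}$ as a dim-pairing resolving set and concludes via Proposition~\ref{prop:pairing-in-hypergraphs} and Corollary~\ref{cor:pairing_resolving}. Your additional remark that the diagonal (rather than edge) pairing is forced, since its transversals are precisely the four metric bases $W_1,\dots,W_4$ of the cited lemma, is a correct justification of what the paper merely asserts.
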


\begin{proof}
Since $\{\{(u_1, v_1),(u_s,v_t)\}, \{(u_s,v_1),(u_1,v_t)\}\}$ is a dim-pairing resolving set of $P_s \cp P_t$, Proposition~\ref{prop:pairing-in-hypergraphs} implies that $o(P_s \cp P_t)=\mathcal{R}$. Moreover, $R_{\rm MB}(P_s \cp P_t) = R'_{\rm MB}(P_s \cp P_t) = \dim(P_s \cp P_t)$ by Corollary~\ref{cor:pairing_resolving}.
\end{proof}

Continuing with some grid-related graphs, we next study how the MBRG behaves on the torus grid graphs, that is, the Cartesian product of cycles. We recall the following results that will be used in proving Proposition~\ref{thm_cartesian_cycles}.

\begin{theorem} \emph{\cite{grid}}\label{dim_cartesian_cycles}
If $s,t \ge 3$, then 
$$\dim(C_s\cp C_t)=\left\{\begin{array}{ll}
3; & \mbox{$s$ or $t$ is odd,}\\
4; & \mbox{$s$ and $t$ are even.}
\end{array}
\right.$$
\end{theorem}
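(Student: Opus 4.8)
The plan is to work in the natural coordinates $V(C_s \cp C_t) = \mathbb{Z}_s \times \mathbb{Z}_t$, exploiting that the Cartesian-product metric is additive: $d((i,j),(i',j')) = \delta_s(i,i') + \delta_t(j,j')$, where $\delta_n(a,b) = \min\{|a-b|, n-|a-b|\}$ is the distance in $C_n$. Thus the code of a vertex $(i,j)$ with respect to a landmark $(a,b)$ is the single number $\delta_s(i,a) + \delta_t(j,b)$, and two landmarks sharing a coordinate, or antipodal in a factor, fail to separate the two reflections of a vertex across that landmark. I would record at the outset the two symmetries responsible for all ambiguity: the reflection $i \mapsto 2a - i$ about a landmark, which preserves $\delta_s(\cdot,a)$, and, when $n$ is even, the complementary relation $\delta_n(i,a) + \delta_n(i, a + n/2) = n/2$ coming from antipodality.

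For the lower bound $\dim(C_s \cp C_t) \ge 3$ I would show that any two landmarks $w_1 = (a_1,b_1)$, $w_2 = (a_2,b_2)$ leave some pair unresolved, by cases. If $a_1 = a_2$ (or the two are antipodal in the first factor, which requires $s$ even), then two vertices differing from each other only by a reflection in the first coordinate have equal codes; symmetrically for the second factor. In the remaining case both coordinate-projections of $\{w_1,w_2\}$ resolve their cycle, and I would use the diagonal reflection $(i,j) \mapsto (2a_1 - i,\, 2b_1 - j)$, which automatically preserves the $w_1$-distance; one then needs the $w_2$-distance to agree too, i.e. $\phi(i) = \psi(j)$ where $\phi(i) = \delta_s(i,a_2) - \delta_s(2a_1-i,a_2)$ and $\psi(j) = \delta_t(2b_1-j,b_2) - \delta_t(j,b_2)$. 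Both $\phi$ and $\psi$ are odd about their centers and, in this case, attain the value $2$ (a one-step move off the center changes this signed distance difference by exactly $2$ unless the two landmarks coincide or are antipodal, which is excluded), so a common nonzero value exists; choosing $i^\ast,j^\ast$ with $\phi(i^\ast)=\psi(j^\ast)=2$ gives $i^\ast\neq a_1$ and hence a genuine unresolved pair.

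The upper bounds I would settle by exhibiting explicit resolving sets and checking them against the same additive formula. When one factor, say $C_t$, is odd, I would take a three-vertex ``corner'' set such as $\{(0,0),(1,0),(0,1)\}$: the three coordinates of a code recover the signed differences $\delta_s(i,0) - \delta_s(i,1)$ and $\delta_t(j,0) - \delta_t(j,1)$, which break the reflection symmetry in each factor, while oddness of $t$ removes the residual antipodal ambiguity in the second factor; a short verification then shows all codes are distinct, giving $\dim \le 3$ and matching the lower bound. When both $s$ and $t$ are even I would exhibit a four-vertex (again a perturbed corner) configuration and verify that it resolves, giving $\dim \le 4$.

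The hardest part is the lower bound $\dim \ge 4$ in the both-even case: here one must rule out \emph{every} three-element set, and the complementary antipodal relation $\delta_n(i,a) + \delta_n(i,a+n/2) = n/2$ supplies an extra rigidity that a generic trio of landmarks cannot overcome. I expect to organize this as a finite case analysis on the relative positions (and parities of the half-sums) of the three landmarks, or equivalently as a parity argument over $\mathbb{Z}_2$ recording on which side of each landmark a vertex lies; in each configuration one produces a pair related by a suitable product of reflections and antipodal shifts with identical codes. Making this case analysis exhaustive yet uniform is the main obstacle, and it is precisely the step where the evenness of \emph{both} factors is essential.
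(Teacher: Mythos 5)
The paper does not prove this statement; it is quoted from~\cite{grid}, so there is no internal proof to compare against and your proposal has to stand on its own. It does not: two of its three main steps contain concrete errors, and the third is only announced. First, the upper-bound construction for the odd case is wrong. In $C_5\cp C_5$ the ``corner'' set $\{(0,0),(1,0),(0,1)\}$ fails to resolve the vertices $(2,0)$ and $(1,4)$: both have code $(2,1,3)$, because $\delta_5(\cdot,0)-\delta_5(\cdot,1)$ is not injective ($i=1$ and $i=2$ both give $1$, $i=0$ and $i=4$ both give $-1$) and the raw sums compensate across the two coordinates. This is exactly why the actual metric basis recorded in Proposition~\ref{cartesian_cycles_basis}(a) takes two \emph{diametral} vertices in an odd factor plus a neighbour of one of them in the other factor, rather than a corner. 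Second, the lower bound $\dim\ge 3$ via the diagonal reflection about $w_1$ has a gap: the parenthetical claim that one step off the center changes $\phi$ by exactly $2$ is false in odd cycles (in $C_3$ with $a_1=0$, $a_2=1$ one gets $\phi\in\{0,\pm1\}$), and worse, $\phi$ and $\psi$ need not share any nonzero value. For $w_1=(0,0)$, $w_2=(1,1)$ in $C_3\cp C_4$ one has $\phi\in\{0,\pm1\}$ and $\psi\in\{0,\pm2\}$, so your reflection produces no unresolved pair at all; the pair that actually witnesses non-resolution there is $(0,1)$ versus $(1,0)$, which arises from trading one unit of displacement between the two coordinates, not from a reflection about $w_1$. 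Any correct two-landmark argument must account for this ``exchange'' symmetry in addition to the reflections and antipodal relations you list.

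Third, the lower bound $\dim(C_s\cp C_t)\ge 4$ when $s$ and $t$ are both even --- which you yourself identify as the hardest part --- is not proved; you only state that you expect a finite case analysis to work. Since that is the step where the theorem's case distinction actually lives, the proposal as written establishes neither inequality of the even-even case and only one inequality (the lower bound $\ge 3$, and that with the gap above) of the odd case. The overall strategy of working with the additive product metric and cataloguing the code-preserving symmetries is reasonable and is in the spirit of~\cite{grid}, but each of the three components needs to be repaired or supplied before this constitutes a proof.
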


\begin{proposition} \emph{\cite{grid}}\label{cartesian_cycles_basis}
Let $s,t \ge 3$ be integers.
\begin{itemize}
\item[(a)] If $s$ is odd, let $W=\{x,y,z\} \subseteq V(C_s\cp C_t)$ such that $x,y$ are diametral in a copy of $C_s$, and $z$ is adjacent to $x$ in a copy of $C_t$. Then $W$ is a metric basis for $C_s\cp C_t$.
\item[(b)] If $s$ and $t$ are even, let $W=\{x,y,z,w\} \subseteq V(C_s\cp C_t)$ such that $x,y$ are diametral in a copy of $C_s$, $z$ is adjacent to $x$ in a copy of $C_s$, and $w$ is adjacent to $x$ in a copy of $C_t$. Then $W$ is a metric basis for $C_s\cp C_t$.
\end{itemize}
\end{proposition}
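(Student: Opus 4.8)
The plan is to verify directly that each proposed set $W$ is a resolving set of $C_s\cp C_t$, and then invoke Theorem~\ref{dim_cartesian_cycles} to conclude that $W$ is in fact a metric basis, since $|W|=3=\dim(C_s\cp C_t)$ in case (a) and $|W|=4=\dim(C_s\cp C_t)$ in case (b). Throughout I would identify $V(C_s)$ with $\mathbb{Z}_s$ and $V(C_t)$ with $\mathbb{Z}_t$, write a vertex as $(i,j)$, and use the product distance formula $d((i,j),(i',j'))=d_{C_s}(i,i')+d_{C_t}(j,j')$, where $d_{C_n}(a,b)=\min\{|a-b|,\,n-|a-b|\}$. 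After fixing coordinates so that $x=(0,0)$, the remaining references become explicit: in (a) one may take $y=(\frac{s-1}{2},0)$ and $z=(0,1)$, and in (b) one may take $y=(\frac{s}{2},0)$, $z=(1,0)$, and $w=(0,1)$.

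The engine of the argument is a difference-of-codes trick. For two references lying in a common copy of $C_s$ (same $C_t$-coordinate), their $C_t$-contributions cancel, so the difference of the corresponding code entries depends only on the first coordinate $i$. Thus from the code of $(i,j)$ I would first recover $i$ using the references sharing the $C_t$-coordinate of $x$, then substitute back to read off $d_{C_t}(j,0)$ and $d_{C_t}(j,1)$ from the \emph{raw} entries at $x$ and $w$, and finally recover $j$. This last step uses the elementary fact that two \emph{adjacent} vertices resolve $C_t$ for every $t\ge 3$: if two vertices had equal distances to both $0$ and $1$, the two reflections would force $t\mid 2$.

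For (a) the key sub-lemma is that, in an odd cycle $C_s=\mathbb{Z}_{2m+1}$, the map $i\mapsto d_{C_s}(i,0)-d_{C_s}(i,m)$ is injective; a short case split (for $0\le i\le m$ the value is $2i-m$, for $m<i\le 2m$ it is $3m+1-2i$) shows the two branches are each monotone and take values of opposite parity, hence the whole map is one-to-one. This recovers $i$ from the difference of the $x$- and $y$-entries, after which $j$ is read off as above, completing (a).

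The main obstacle is case (b), where the antipodal pair $\{0,s/2\}$ alone does \emph{not} resolve the first coordinate: the reflection $i\mapsto s-i$ fixes both $d_{C_s}(\cdot,0)$ and $d_{C_s}(\cdot,s/2)$, so that difference cannot separate $i$ from $s-i$. The role of the extra reference $z=(1,0)$ is precisely to break this symmetry, and the crux is to show that the pair $\bigl(d_{C_s}(i,0)-d_{C_s}(i,s/2),\,d_{C_s}(i,0)-d_{C_s}(i,1)\bigr)$ is injective on $\mathbb{Z}_s$ for even $s$. I would argue that the first coordinate already forces any collision to have the form $\{i,s-i\}$, and then compute $q(i)-q(s-i)=d_{C_s}(i,s-1)-d_{C_s}(i,1)$, which vanishes only at the fixed points $i\in\{0,s/2\}$ of the reflection swapping $1$ and $s-1$; hence no genuine collision survives. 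With $i$ pinned down, the $C_t$-coordinate is recovered exactly as before from the entries at $x$ and $w$. The only subtlety requiring care is the bookkeeping: one must use the \emph{differences} to extract $i$ but the \emph{raw} distances to extract $j$, since for even $t$ the difference $d_{C_t}(\cdot,0)-d_{C_t}(\cdot,1)$ is itself not injective.
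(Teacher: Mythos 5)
Your proof is correct. Note first that the paper itself offers no proof of this proposition: it is quoted from~\cite{grid}, so there is nothing internal to compare against; what you have produced is a self-contained verification, and it holds up. The coordinate normalization $x=(0,0)$, etc., is legitimate because the automorphisms of $C_s\cp C_t$ (rotations and reflections in each factor) act transitively on the configurations described in the statement. The two cycle lemmas you rely on are both right: in $C_{2m+1}$ the difference $d(i,0)-d(i,m)$ equals $2i-m$ on one arc and $3m+1-2i$ on the other, and the two branches take values of opposite parity, so the map is injective; in $C_{2m}$ the antipodal difference collapses exactly the pairs $\{i,s-i\}$, and $q(i)-q(s-i)=d_{C_s}(i,s-1)-d_{C_s}(i,1)=\pm 2$ off the fixed points $\{0,s/2\}$, so adding the neighbour $z=(1,0)$ breaks the reflection. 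Your bookkeeping point is also the right one to flag: the differences of entries taken at references sharing a $C_t$-coordinate isolate the first coordinate, after which the raw entries at $x$ and $w$ give $d_{C_t}(j,0)$ and $d_{C_t}(j,1)$, and two adjacent vertices resolve $C_t$ for $t\ge 3$ since a double collision would force $t\mid 2$. Finally, minimality follows from Theorem~\ref{dim_cartesian_cycles} as you say (and in case (b) the vertices $x,y,z$ are automatically distinct because $s$ even and $s\ge 3$ forces $s\ge 4$). This is a clean, elementary route to a result the paper only cites.
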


\begin{lemma}\emph{\cite{grid}}\label{lem_antipodal}
For even integers $s,t\ge4$, let $W$ be a resolving set of $C_s\cp C_t$ with $u\in W$. If $u$ and $u'$ are diametral in $C_s \cp C_t$, then $(W-\{u\})\cup\{u'\}$ is also a resolving set of $C_s \cp C_t$.
\end{lemma}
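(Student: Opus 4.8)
The plan is to reduce the whole statement to a single distance identity, namely that for even $s,t$ the antipodal map in $C_s \cp C_t$ acts as a ``distance complement''. First I would record the one-dimensional fact: in an even cycle $C_s$, if $x$ and $x'$ are antipodal (so $d(x,x')=\tfrac{s}{2}$), then for \emph{every} vertex $w$ of $C_s$ one has $d(w,x)+d(w,x')=\tfrac{s}{2}$. This follows by placing $x=0$ and $x'=\tfrac{s}{2}$ on $\{0,1,\ldots,s-1\}$ and checking the two ranges: for $0\le w\le \tfrac{s}{2}$ we get $w+(\tfrac{s}{2}-w)=\tfrac{s}{2}$, and for $\tfrac{s}{2}\le w\le s$ we get $(s-w)+(w-\tfrac{s}{2})=\tfrac{s}{2}$.

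Since distances in a Cartesian product add coordinatewise, $d\big((w_1,w_2),(x_1,x_2)\big)=d_{C_s}(w_1,x_1)+d_{C_t}(w_2,x_2)$, and since the diametral vertex of $u=(x_1,x_2)$ is precisely $u'=(x_1+\tfrac{s}{2},\,x_2+\tfrac{t}{2})$, summing the two one-dimensional identities gives the key relation
\[
 d(w,u)+d(w,u') \;=\; \tfrac{s}{2}+\tfrac{t}{2} \;=\; \mathrm{diam}(C_s \cp C_t)
\]
for every vertex $w$. Writing $D:=\tfrac{s}{2}+\tfrac{t}{2}$, this says $d(w,u')=D-d(w,u)$ with $D$ independent of $w$; thus $w\mapsto d(w,u')$ is obtained from $w\mapsto d(w,u)$ by the order-reversing bijection $\delta\mapsto D-\delta$.

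With this in hand the lemma is short. Set $W'=(W\setminus\{u\})\cup\{u'\}$ and take any two distinct vertices $p,q$. If some $z\in W\setminus\{u\}$ resolves them, i.e.\ $d(p,z)\ne d(q,z)$, then $z\in W'$ and we are done. Otherwise, since $W$ resolves $G$, the vertex $u$ must resolve them, so $d(p,u)\ne d(q,u)$; applying $\delta\mapsto D-\delta$ preserves this inequality, giving $d(p,u')=D-d(p,u)\ne D-d(q,u)=d(q,u')$, so $u'\in W'$ resolves $p,q$. Hence every pair is resolved by $W'$, which is therefore a resolving set; note the argument is unaffected if $u'$ already happens to lie in $W$.

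I expect the only genuine content to sit in the first two paragraphs: once $d(\cdot,u')=D-d(\cdot,u)$ is established, the replacement argument is purely formal. The step requiring care is confirming that the diametral vertex of $u$ is exactly the coordinatewise antipode and that it is \emph{unique}, which is what forces the hypothesis that both $s$ and $t$ be even (each factor cycle then has a single antipode at distance half its length); this is precisely the case $s,t\ge 4$ even in which the lemma is stated.
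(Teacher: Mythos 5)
The paper does not prove this lemma; it is quoted from the cited reference \cite{grid} without proof, so there is no in-paper argument to compare against. Your proof is correct and complete on its own terms: the one-dimensional identity $d(w,x)+d(w,x')=\tfrac{s}{2}$ for antipodal vertices of an even cycle, the additivity of distances in the Cartesian product, and the resulting relation $d(w,u')=D-d(w,u)$ with $D=\tfrac{s}{2}+\tfrac{t}{2}$ together show that $u'$ separates exactly the same pairs of vertices as $u$, which is precisely what the swap argument needs. Your closing remarks are also the right ones to make: evenness of both $s$ and $t$ is what makes the coordinatewise antipode the unique diametral vertex and makes the complement identity hold in each factor, and the degenerate case $u'\in W$ causes no harm since $W-\{u\}$ together with $u'$ still separates every pair formerly separated by $u$.
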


\begin{proposition}\label{thm_cartesian_cycles}
\label{prop:torus}
If $s,t \ge 3$, then $o(C_s \cp C_t)=\mathcal{R}$ and $$R_{\rm MB}(C_s \cp C_t) = R'_{\rm MB}(C_s \cp C_t) = \dim(C_s \cp C_t).$$
\end{proposition}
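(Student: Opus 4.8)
The plan is to produce, in each of the two cases of Theorem~\ref{dim_cartesian_cycles}, a dim-pairing resolving set of $C_s \cp C_t$ and then to invoke Proposition~\ref{prop:pairing-in-hypergraphs} and Corollary~\ref{cor:pairing_resolving} to conclude $o(C_s \cp C_t) = \mathcal{R}$ and $R_{\rm MB}(C_s \cp C_t) = R'_{\rm MB}(C_s \cp C_t) = \dim(C_s \cp C_t)$. Throughout I would use additive coordinates, writing a vertex as $(a,b)$ with $a \in \mathbb{Z}_s$ and $b \in \mathbb{Z}_t$, so that $d((a,b),(a',b'))$ equals the $C_s$-distance between $a$ and $a'$ plus the $C_t$-distance between $b$ and $b'$.

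For the case where $s$ and $t$ are both even (so $\dim = 4$), the key tool is Lemma~\ref{lem_antipodal}. Start from the metric basis $W = \{x,y,z,w\}$ furnished by Proposition~\ref{cartesian_cycles_basis}(b), and for each vertex $u \in W$ let $u'$ denote its diametral vertex, obtained by adding $(s/2,t/2)$. First I would check that the eight vertices $x,y,z,w,x',y',z',w'$ are pairwise distinct; with $x=(0,0)$, $y=(s/2,0)$, $z=(1,0)$, $w=(0,1)$ this is a short coordinate computation using only $s,t \ge 4$. Hence $A=\{\{x,x'\},\{y,y'\},\{z,z'\},\{w,w'\}\}$ is a set of four disjoint pairs. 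To see that $A$ is a pairing resolving set, fix an arbitrary transversal $T$ and let $I \subseteq \{x,y,z,w\}$ be the set of vertices whose diametral partner was chosen. Beginning with the resolving set $W$, swap the members of $I$ for their antipodes one at a time; at each step the current set is a resolving set containing the vertex being replaced, so Lemma~\ref{lem_antipodal} guarantees that the result is again a resolving set. After $|I|$ swaps we reach $T$, which is therefore resolving. Thus $A$ is a dim-pairing resolving set and the claim follows in this case.

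For the remaining cases, where $s$ or $t$ is odd (so $\dim = 3$), I would again start from a metric basis, now the one in Proposition~\ref{cartesian_cycles_basis}(a), and build an explicit set of three disjoint pairs. The obstacle here is that Lemma~\ref{lem_antipodal} is unavailable: with an odd factor there is no unique antipode, so the clean iterated-swap argument breaks down. I would instead pair each basis vertex with a suitable translate and verify the pairing property by direct distance analysis. When exactly one factor is odd, say $t$ is even, the half-shift $(a,b) \mapsto (a,b+t/2)$ satisfies $d_t(b+t/2,q) = t/2 - d_t(b,q)$, so shifting a coordinate replaces the $C_t$-part of each distance by its complement in $t/2$; the heart of the verification is to show that this complementation keeps all codes pairwise distinct, yielding the needed swap behaviour in the even direction. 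When both $s$ and $t$ are odd no such half-shift exists, and here I expect the main difficulty: one must exhibit three concrete pairs and check directly that each of the eight transversals resolves, exploiting the vertex-transitivity of the torus and the relabelling freedom in Proposition~\ref{cartesian_cycles_basis}(a) to reduce the number of genuinely different transversals that need checking.

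In summary, the even-even case is handled cleanly by iterating Lemma~\ref{lem_antipodal} on an antipodal pairing of a known basis, while the dim-$3$ cases require constructing explicit three-pair sets and verifying the pairing-resolving property by hand; the latter verification, and in particular the fully odd case where no antipodal swap is available, is the step I expect to be the main obstacle.
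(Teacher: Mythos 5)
Your even--even case is essentially the paper's argument: pair each vertex of the basis from Proposition~\ref{cartesian_cycles_basis}(b) with its antipode and iterate Lemma~\ref{lem_antipodal} over the transversal; the paper does exactly this (noting that no two basis vertices are diametral, so the four pairs are disjoint), and your one-swap-at-a-time justification is a correct way to make the iteration precise.

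The gap is the case where $s$ or $t$ is odd. There you commit to producing a static dim-pairing resolving set of three disjoint pairs, but you never exhibit the pairs, and you yourself flag the verification (especially when both factors are odd) as the main obstacle. This is not a routine omission: the natural candidates do not obviously work. Given $x=(0,0)$, the two diametral vertices of $x$ in its $C_s$-copy form a usable pair, and the two $C_t$-neighbours of $x$ form another, but it is unclear what to pair with $x$ itself so that \emph{every} transversal is resolving, and Proposition~\ref{cartesian_cycles_basis}(a) only certifies bases of a special shape, so you would be forced into a from-scratch distance computation for all eight transversals. The paper sidesteps this entirely by abandoning the pairing framework in the odd case and giving an \emph{adaptive} strategy: in the R-game, ${\rm R}^*$ first claims an arbitrary $x$; relative to this $x$ there are two diametral vertices $y,y'$ in its $C_s$-copy and two neighbours $z,z'$ in its $C_t$-copy, and any choice $\{x,y^*,z^*\}$ is a metric basis by Proposition~\ref{cartesian_cycles_basis}(a), so ${\rm R}^*$ secures one vertex from each pair on moves two and three; in the S-game he first answers Spoiler's vertex $x'$ with a $C_t$-neighbour $x$ of $x'$ and then branches on which of $y,y'$ is still available, using the pair of $C_t$-neighbours of the diametral vertex he actually obtains. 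In short, the ``pairs'' are created on the fly after the first move rather than fixed in advance. Without either carrying out your explicit pairing construction or switching to such an adaptive strategy, the odd case of the proposition remains unproved.
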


\begin{proof}
Let $s,t\ge 3$ be integers and consider the following two cases. 

\medskip\noindent
{\bf Case 1}: $s$ and $t$ are even.\\
Let $W=\{x,y,z,w\}$ be a metric basis for $C_s \cp C_t$ as described in Proposition~\ref{cartesian_cycles_basis}(b); note that no two vertices in $W$ are diametral in $C_s \cp C_t$. Let $x', y', z'$, and $w'$ be diametral vertices of $x,y,z$, and $w$, respectively, in $C_s \cp C_t$. Then Lemma~\ref{lem_antipodal} implies that $\{\{x,x'\}, \{y,y'\}, \{z,z'\}, \{w,w'\}\}$ is a dim-pairing resolving set of $C_s \cp C_t$, and hence Proposition~\ref{prop:pairing-in-hypergraphs} and Corollary~\ref{cor:pairing_resolving} yield the conclusion.

\medskip\noindent
{\bf Case 2}: $s$ is odd.\\
First, we consider the $R$-game. Suppose $R^*$ selects an arbitrary vertex, say $x$, in $C_s\cp C_t$ on his first move. Note that there are two distinct vertices, say $y$ and $y'$, that are diametral to $x$ in the copy of $C_s$ containing $x$ and there are two distinct vertices, say $z$ and $z'$, that are adjacent to $x$ in the copy of $C_t$ containing $x$. By Proposition~\ref{cartesian_cycles_basis}(a), the set $\{x,y^*, z^*\}$, where $y^*\in\{y,y'\}$ and $z^*\in\{z,z'\}$, is a metric basis for $C_s \cp C_t$. Since $R^*$ can select a vertex in $\{y,y'\}$ and a vertex in $\{z,z'\}$ in his second and third move, $R^*$ wins the $R$-game.

Second, we consider the $S$-game. Suppose $S^*$ selects a vertex, say $x'$, in $C_s\cp C_t$ on her first move. Then $R^*$ can choose a neighbor $x$ of $x'$ in the copy of $C_t$ that contains $x'$. Note that there are two distinct vertices, say $y$ and $y'$, that are diametral to $x$ in the copy of $C_s$ containing $x$ and there are two distinct vertices, say $z_1$ and $z_1'$ ($z_2$ and $z'_2$, respectively), that are adjacent to $y$ ($y'$, respectively) in the copy of $C_t$ containing $y$ ($y'$, respectively). By Proposition~\ref{cartesian_cycles_basis}(a), both $\{x,y, z_1^*\}$ and $\{x,y',z_2^*\}$, where $z_1^*\in\{z_1,z'_1\}$ and $z_2^*\in\{z_2,z'_2\}$, form metric bases for $C_s \cp C_t$. So, $R^*$ can select a vertex in $\{y,y'\}$ on his second move. If $R^*$ selects $y$ on his second move, he can select a vertex in $\{z_1,z'_1\}$ in his third move; if $R^*$ selects $y'$ on his second move, he can select a vertex in $\{z_2,z'_2\}$ in his third move. In each case, $R^*$ wins the $S$-game.

Therefore, we conclude that $o(C_s \cp C_t)=\mathcal{R}$ and $R_{\rm MB}(C_s \cp C_t) = R'_{\rm MB}(C_s \cp C_t) = \dim(C_s \cp C_t)$.~\hfill
\end{proof}

We conclude this paper with some open problems.

\begin{question}
It is known that determining the metric dimension of a general graph is an NP-hard problem (see~\cite{NP}). What can we say about the computational complexity of determining the outcome of MBRG?
\end{question}

\begin{question}
For product graphs $G$, such as the Cartesian product, the lexicographic product, the corona product, and the direct product, can we determine $o(G)$, as well as $R_{\rm MB}(G)$ and $R'_{\rm MB}(G)$ (or $S_{\rm MB}(G)$ and $S'_{\rm MB}(G)$)?
\end{question}

\begin{question}
It is easy to see that $R_{\rm MB}(G)=1$ ($R'_{\rm MB}(G)=1$, respectively) if and only if $G=P_n$ for $n \ge 2$. For any positive integer $k\in[\lfloor \frac{n(G)}{2}\rfloor]-\{1\}$, can we characterize graphs $G$ satisfying $R_{\rm MB}(G)=k$ as well as $R'_{\rm MB}(G)=k$?
\end{question}

\section*{Acknowledgements}

S.K.\ acknowledges the financial support from the Slovenian Research Agency (research core funding No.\ P1-0297 and projects J1-9109, J1-1693,  N1-0095, N1-0108). I.G.Y.\ began to conduct research on this project while he was visiting the University of Ljubljana, Slovenia, supported by ``Ministerio de Educaci\'on, Cultura y Deporte'', Spain, under the ``Jos\'e Castillejo'' program for young researchers (reference number: CAS18/00030).

\end{document}